\theoremstyle{plain}
\newtheorem{theorem}{Theorem}[section]
\newtheorem{lemma}[theorem]{Lemma}
\newtheorem{corollary}[theorem]{Corollary}
\newtheorem{proposition}[theorem]{Proposition}
\theoremstyle{definition}
\newtheorem{definition}[theorem]{Definition}
\newtheorem{example}[theorem]{Example}
\newtheorem{remark}[theorem]{Remark}
\newtheorem{question}[theorem]{Question}
\theoremstyle{remark}
\newcounter{zahl}
\def\theenumi{(\alph{enumi})}
\def\p@enumii{\theenumi}
\newcommand{\DS}{\displaystyle}
\newcommand{\TS}{\textstyle}
\newcommand{\SC}{\scriptstyle}
\newcommand{\SSC}{\scriptscriptstyle}
\DeclareMathOperator{\End}{End}
\DeclareMathOperator{\Frob}{Frob}
\DeclareMathOperator{\GL}{GL}
\DeclareMathOperator{\Koh}{H}
\DeclareMathOperator{\CKoh}{\check H}
\DeclareMathOperator{\Hom}{Hom}
\DeclareMathOperator{\Id}{Id}
\DeclareMathOperator{\Lie}{Lie}
\DeclareMathOperator{\Pic}{Pic}
\DeclareMathOperator{\QEnd}{QEnd}
\DeclareMathOperator{\QHom}{QHom}
\DeclareMathOperator{\Quot}{Frac}
\DeclareMathOperator{\Res}{Res}
\DeclareMathOperator{\Spec}{Spec}
\DeclareMathOperator{\Sym}{Sym}
\DeclareMathOperator{\Tor}{Tor}
\DeclareMathOperator{\Var}{V}
\DeclareMathOperator{\ann}{ann}
\DeclareMathOperator{\coker}{coker}
\newcommand{\et}{{\rm \acute{e}t}}
\newcommand{\fpqc}{{\it fpqc}}
\newcommand{\fppf}{{\it fppf}}
\DeclareMathOperator{\id}{\,id}
\DeclareMathOperator{\im}{im}
\renewcommand{\mod}{{\rm\;mod\;}}
\DeclareMathOperator{\ord}{ord}
\DeclareMathOperator{\rk}{rk}
\renewcommand{\phi}{\varphi}
\renewcommand{\theta}{\vartheta}
\renewcommand{\epsilon}{\varepsilon}
\newcommand{\BOne} {{\mathchoice{\hbox{\rm1\kern-2.7pt l\kern.9pt}}
                              {\hbox{\rm1\kern-2.7pt l\kern.9pt}}
                              {\hbox{\scriptsize\rm1\kern-2.3pt l\kern.4pt}}
                              {\hbox{\scriptsize\rm1\kern-2.4pt l\kern.5pt}}}}
\newcommand{\BA}{{\mathbb{A}}}
\newcommand{\BF}{{\mathbb{F}}}
\newcommand{\BG}{{\mathbb{G}}}
\newcommand{\BN}{{\mathbb{N}}}
\newcommand{\BP}{{\mathbb{P}}}
\newcommand{\BZ}{{\mathbb{Z}}}
\newcommand{\CalD}{{\cal{D}}}
\newcommand{\CG}{{\cal{G}}}
\newcommand{\CJ}{{\cal{J}}}
\newcommand{\CK}{{\cal{K}}}
\newcommand{\CL}{{\cal{L}}}
\newcommand{\CO}{{\cal{O}}}
\newcommand{\FG}{{\mathfrak{G}}}
\newcommand{\Fa}{{\mathfrak{a}}}
\newcommand{\Fb}{{\mathfrak{b}}}
\newcommand{\Fm}{{\mathfrak{m}}}
\newcommand{\Fp}{{\mathfrak{p}}}
\newbox\dotCOdbox
\newbox\dotCOtbox
\newbox\dotCOsbox
\newbox\dotCOssbox
\let\setminus\smallsetminus
\newcommand{\es}{\enspace}
\newcommand{\dual}{^{\SSC\lor}}
\newcommand{\mal}{^{\SSC\times}}
\newcommand{\fdot}{{\,{\SSC\bullet}\,}}
\newcommand{\ul}[1]{{\underline{#1}}}
\newcommand{\ol}[1]{{\overline{#1}}}
\newcommand{\wh}[1]{{\widehat{#1}}}
\newcommand{\wt}[1]{{\widetilde{#1}}}
\newcommand{\invlim}[1][]{\ifthenelse{\equal{#1}{}}% falls Argument leer
{\DS \lim_{\longleftarrow}}%                         verwende niedrige Version
{\DS \lim_{\underset{#1}{\longleftarrow}}}%  sonst:  verwende Argument
}
\newcommand{\dirlim}[1][]{\ifthenelse{\equal{#1}{}}% falls Argument leer
{\DS \lim_{\longrightarrow}}%                        verwende niedrige Version
{\DS \lim_{\underset{#1}{\longrightarrow}}}% sonst:  verwende Argument
}
\newcommand{\dbl}{{\mathchoice{\mbox{\rm [\hspace{-0.15em}[}}
                              {\mbox{\rm [\hspace{-0.15em}[}}
                              {\mbox{\scriptsize\rm [\hspace{-0.15em}[}}
                              {\mbox{\tiny\rm [\hspace{-0.15em}[}}}}
\newcommand{\dbr}{{\mathchoice{\mbox{\rm ]\hspace{-0.15em}]}}
                              {\mbox{\rm ]\hspace{-0.15em}]}}
                              {\mbox{\scriptsize\rm ]\hspace{-0.15em}]}}
                              {\mbox{\tiny\rm ]\hspace{-0.15em}]}}}}
\newcommand{\dpl}{{\mathchoice{\mbox{\rm (\hspace{-0.15em}(}}
                              {\mbox{\rm (\hspace{-0.15em}(}}
                              {\mbox{\scriptsize\rm (\hspace{-0.15em}(}}
                              {\mbox{\tiny\rm (\hspace{-0.15em}(}}}}
\newcommand{\dpr}{{\mathchoice{\mbox{\rm )\hspace{-0.15em})}}
                              {\mbox{\rm )\hspace{-0.15em})}}
                              {\mbox{\scriptsize\rm )\hspace{-0.15em})}}
                              {\mbox{\tiny\rm )\hspace{-0.15em})}}}}
\newcounter{commentcounter}
\newcommand{\comment}[1]{\stepcounter{commentcounter}%
                          {\color{red}%
%                          \textbf{Comment \arabic{commentcounter}.} #1
}
\immediate\write16{}
\immediate\write16{Warning: There was still a comment . . . }
\immediate\write16{}}
\def\?{\ 
{\bf\color{red}???}\ 
\immediate\write16{}
\immediate\write16{Warning: There was still a question mark . . . }
\immediate\write16{}}
\long\def\forget#1{}
\def\longto{\longrightarrow}
\def\into{\hookrightarrow}
\def\onto{\twoheadrightarrow}
\def\isoto{\stackrel{}{\mbox{\hspace{1mm}\raisebox{+1.4mm}{$\SC\sim$}\hspace{-3.5mm}$\longrightarrow$}}}
\def\longinto{\lhook\joinrel\longrightarrow}
\newbox\mybox
\def\arrover#1{\mathrel{
       \setbox\mybox=\hbox spread 1.4em{\hfil$\scriptstyle#1$\hfil}
       \vbox{\offinterlineskip\copy\mybox
             \hbox to\wd\mybox{\rightarrowfill}}}}
\DeclareMathOperator{\Dr}{Dr}
\DeclareMathOperator{\CDr}{\CalD{\it r}}
\newcommand{\CoL}[1]{\ell^{\,^\bullet}_{#1}}
\newcommand{\Zar}{{Zar}}
\def\olF{{\,\overline{\!F}}}
\def\olCL{{\,\overline{\!\CL}}}
\def\ulE{{\underline{E\!}\,}}
\def\ulM{{\underline{M\!}\,}}
\def\ulN{{\underline{N\!}\,}}
\def\ulHM{{\underline{\hat M\!}\,}}
\def\ulV{{\underline{V\!}\,}}
\def\ulX{{\underline{X\!}\,}}
\def\ulY{{\underline{Y\!}\,}}
\def\s{\sigma^\ast}
\begin{document}

%%%%%%%%%%%%%%%%%%%%%%%%%%%%%%%%%%%%%%%%%%%%%%%%%%%%%%%%%%%%%%%%%%%%%%

\author{Urs Hartl}

\title{Isogenies of abelian Anderson $A$-modules and $A$-motives}

\maketitle

\begin{abstract}
As a generalization of Drinfeld modules, Greg Anderson introduced abelian $t$-modules and $t$-motives over a perfect field. In this article we study relative versions of these over rings. We investigate isogenies among them. Our main results state that every isogeny possesses a dual isogeny in the opposite direction, and that a morphism between abelian $t$-modules is an isogeny if and only if the corresponding morphism between their associated $t$-motives is an isogeny. We also study torsion submodules of abelian $t$-modules which in general are non-reduced group schemes. They can be obtained from the associated $t$-motive via the finite shtuka correspondence of Drinfeld and Abrashkin. The inductive limits of torsion submodules are the function field analogs of $p$-divisible groups. These limits correspond to the local shtukas attached to the $t$-motives associated with the abelian $t$-modules. In this sense the theory of abelian $t$-modules is captured by the theory of $t$-motives.
 
\noindent
{\it Mathematics Subject Classification (2010)\/}: 
11G09,  % Drinfeld Modules, higher dimensional motives
%11F80,  % Galois representations (Discontinuous groups and automorphic forms)
%11S20,  % Galois theory of local and $p$-adic fields
%11S25,  % Galois cohomology of local and $p$-adic fields
(14K02,  % Isogeny (in 14K: Abelian varieties and schemes)
13A35,  % Characteristic $p$ methods (Frobenius endomorphism) ...
%14F30,  % $p$-adic cohomology, crystalline cohomology
%14G20,  % Local ground fields
%14G22   % Rigid analytic geometry
%14G35,  % Modular and Shimura varieties
14L05)  % Formal groups, $p$-divisible groups
%14M15,  % Grassmannians, Schubert varieties, flag manifolds
%20G25   % Linear algebraic groups over local fields and their integers
\end{abstract}

\bigskip

%%%%%%%%%%%%%%%%%%%%%%%%%%%%%%%%%%%%%%%%%%%%%%%%%%%%%%%%%%%%%%%%%%%%%%
%
%    Introduction
%
%%%%%%%%%%%%%%%%%%%%%%%%%%%%%%%%%%%%%%%%%%%%%%%%%%%%%%%%%%%%%%%%%%%%%%

\setcounter{tocdepth}{1}
\tableofcontents

\section{Introduction}
\setcounter{equation}{0}

\renewcommand{\epsilon}{\Fp}
\newcommand{\name}{\epsilon}

As a generalization of Drinfeld modules \cite{Drinfeld}, Greg Anderson \cite{Anderson86} introduced \emph{abelian $t$-modules} and \emph{$t$-motives} over a perfect field. In this article we study relative versions of these over rings and generalize them to \emph{abelian Anderson $A$-modules} and \emph{$A$-motives}. The upshot of our results is that the entire theory of abelian Anderson $A$-modules is contained in the theory of $A$-motives. More precisely, let $\BF_q$ be a finite field with $q$ elements, let $C$ be a smooth projective geometrically irreducible curve over $\BF_q$ and let $Q=\BF_q(C)$ be its function field. Let $\infty\in C$ be a closed point and let $A=\Gamma(C\setminus\{\infty\},\CO_C)$ be the ring of functions which are regular outside $\infty$. Let $(R,\gamma)$ be an \emph{$A$-ring}, that is a commutative unitary ring together with a ring homomorphism $\gamma\colon A\to R$. We consider the ideal $\CJ:=(a\otimes1-1\otimes\gamma(a)\colon a\in A)=\ker(\gamma\otimes\id_R\colon A_R\to R)\subset A_R:=A\otimes_{\BF_q}R$ and the endomorphism $\sigma:=\id_A\otimes\Frob_{q,R}\colon a\otimes b\mapsto a\otimes b^q$ of $A_R$. For an $A_R$-module $M$ we set $\sigma^*M:=M\otimes_{A_R,\,\sigma}A_R=M\otimes_{R,\,\Frob_{q,R}}R$, and for an element $m\in M$ we write $\sigma_M^*m:=m\otimes1\in\sigma^*M$.

\begin{definition}\label{DefEffectiveAMotive}
An \emph{effective $A$-motive of rank $r$ over an $A$-ring $R$} is a pair $\ulM=(M,\tau_M)$ consisting of a locally free $A_R$-module $M$ of rank $r$ and an $A_R$-homomorphism $\tau_M\colon \sigma^*M\to M$ whose cokernel is annihilated by $\CJ^n$ for some positive integer $n$. We say that $\ulM$ has \emph{dimension $d$} if $\coker\tau_M$ is a locally free $R$-module of rank $d$ and annihilated by $\CJ^d$. We write $\rk\ulM=r$ and $\dim\ulM=d$ for the rank and the dimension of $\ulM$.

A \emph{morphism} $f\colon(M,\tau_M)\to (N,\tau_N)$ between effective $A$-motives is an $A_R$-homomorphism $f\colon M\to N$ which satisfies $f\circ\tau_M=\tau_N\circ\sigma^*f$.
\end{definition}

Note that $\tau_M$ is always injective and $\coker\tau_M$ is always a finite locally free $R$-module by Proposition~\ref{PropAMotiveEffective} below. We give some explanations for this definition in Section~\ref{SectAMotives} and also define non-effective $A$-motives. If $R$ is a perfect field, $A=\BF_q[t]$ and in addition, $M$ is finitely generated over the non-commutative polynomial ring $R\{\tau\}:=\bigl\{\,\TS\sum\limits_{i=0}^nb_i\tau^i\colon n\in\BN_0, b_i\in R\,\bigr\}$ with $\tau b=b^q\tau$, which acts on $m\in M$ via $\tau\colon m\mapsto \tau_M(\sigma_M^*m)$, then $(M,\tau_M)$ is a $t$-motive in the sense of Anderson~\cite[\S\,1.2]{Anderson86}.

\bigskip

Next let us define abelian Anderson $A$-modules. In Section~\ref{SectAndModules} we give some explanations on the terminology in the following

\begin{definition}\label{DefAndModule}
Let $d$ and $r$ be positive integers. An \emph{abelian Anderson $A$-module of rank $r$ and dimension $d$ over $R$} is a pair $\ulE=(E,\phi)$ consisting of a smooth affine group scheme $E$ over $\Spec R$ of relative dimension $d$, and a ring homomorphism $\phi\colon A\to\End_{R\text{-groups}}(E),\,a\mapsto\phi_a$ such that
\begin{enumerate}
\item \label{DefAndModule_A}
there is a faithfully flat ring homomorphism $R\to R'$ for which $E\times_R\Spec R'\cong\BG_{a,R'}^d$ as $\BF_q$-module schemes, where $\BF_q$ acts on $E$ via $\phi$ and $\BF_q\subset A$, \comment{Try to only require that this is an isom of group schemes !}
\item \label{DefAndModule_B}
$\bigl(\Lie \phi_a-\gamma(a)\bigr)^d=0$ on $\Lie E$ for all $a\in A$,
\item \label{DefAndModule_C}
the set $M:=M(\ulE):=M_q(\ulE):=\Hom_{R\text{-groups},\BF_q\text{\rm-lin}}(E,\BG_{a,R})$ of $\BF_q$-equivariant homomorphisms of $R$-group schemes is a locally free $A_R$-module of rank $r$ under the action given on $m\in M$ by
\[
\begin{array}{rll}
A\ni a\colon & M\longto M, & m\mapsto m\circ \phi_a\\[2mm]
R\ni b\colon & M\longto M, & m\mapsto b\circ m
\end{array}
\]
\end{enumerate}
A \emph{morphism} $f\colon(E,\phi)\to (E',\phi')$ between abelian Anderson $A$-modules is a homomorphism of group schemes $f\colon E\to E'$ over $R$ which satisfies $\phi'_a\circ f=f\circ\phi_a$ for all $a\in A$.
\end{definition}

In particular, if $R$ is a perfect field and $A=\BF_q[t]$, then an abelian Anderson $A$-module is nothing else than an abelian $t$-module in the sense of Anderson~\cite[\S\,1.1]{Anderson86}. When $q$ is not a prime and $R$ is not a field, we do not know the answer to the following

\begin{question}\label{QGpIsomEnough}
If we weaken Definition~\ref{DefAndModule}\ref{DefAndModule_A} and only require that there is an isomorphism of \emph{group schemes} $E\times_{\Spec R}\Spec R'\cong\BG_{a,R'}^d$, do we get an equivalent definition?
\end{question}

For general $A$ and $R$, the abelian Anderson $A$-modules of dimension $1$ over $R$ are precisely the Drinfeld $A$-modules over $R$; see Definition~\ref{DefDrinfeldMod} and Theorem~\ref{ThmDrinfeldMod}. Anderson's anti-equivalence \cite[Theorem~1]{Anderson86} between abelian $t$-modules and $t$-motives directly generalizes to the following

\bigskip\noindent
{\bfseries Theorem~\ref{ThmMotiveOfAModule}.}
{\itshape
If $\ulE=(E,\phi)$ is an abelian Anderson $A$-module then $\ulM(\ulE)=(M,\tau_M)$ with $\tau_M\colon\sigma^*M\to M$, $\sigma_M^*m\mapsto\Frob_{q,\BG_{a,R}}\!\circ\, m$ is an effective $A$-motive of the same rank and dimension as $\ulE$. The contravariant functor $\ulE\mapsto\ulM(\ulE)$ is fully faithful. Its essential image consists of all effective $A$-motives $\ulM=(M,\tau_M)$ over $R$ for which there exists a faithfully flat ring homomorphism $R\to R'$ such that $M\otimes_RR'$ is a finite free left $R'\{\tau\}$-module under the map $\tau\colon M\to M,\,m\mapsto\tau_M(\sigma_M^*m)$.
}

\bigskip

The main purpose of this article is to study isogenies and their (co-)kernels over arbitrary $A$-rings $R$. Here a morphism $f\colon\ulE\to\ulE'$ between abelian Anderson $A$-modules over $R$ is an \emph{isogeny} if it is finite and surjective. On the other hand, a morphism $f\in\Hom_R(\ulM,\ulN)$ between $A$-motives over $R$ is an \emph{isogeny} if $f$ is injective and $\coker f$ is finite and locally free as $R$-module. We give equivalent characterizations in Propositions~\ref{PropIsogAModule}, \ref{PropIsogDrinfeldMod} and \ref{PropDim}. The following are our two main results.

\bigskip\noindent
{\bfseries Theorem~\ref{ThmIsogeny}.}
{\itshape
Let $f\in\Hom_R(\ulE,\ulE')$ be a morphism between abelian Anderson $A$-modules and let $\ulM(f)\in\Hom_R(\ulM',\ulM)$ be the associated morphism between the associated effective $A$-motives $\ulM=\ulM(\ulE)$ and $\ulM'=\ulM(\ulE')$. Then
\begin{enumerate}
\item
$f$ is an isogeny if and only if $\ulM(f)$ is an isogeny.
\stepcounter{enumi}
\item
If $f$ is an isogeny, then $\ker f$ and $\coker\ulM(f)$ correspond to each other under the finite shtuka equivalence which we review in Section~\ref{SectFinShtukas}.
\end{enumerate}
}

\bigskip\noindent
{\bfseries Corollary~\ref{CorDualIsog}.}
{\itshape If $f\in\Hom_R(\ulM,\ulN)$ is an isogeny between $A$-motives then there is an element $0\ne a\in A$ and an isogeny $g\in\Hom_R(\ulN,\ulM)$ with $f\circ g=a\cdot\id_\ulN$ and $g\circ f=a\cdot\id_\ulM$. The same is true for abelian Anderson $A$-modules.
}

\bigskip

This leads to the following result about torsion points in Section~\ref{SectTorsionPts}. Let $(0)\ne\Fa\subset A$ be an ideal and let $\ulE=(E,\phi)$ be an abelian Anderson $A$-module over $R$. The \emph{$\Fa$-torsion submodule $\ulE[\Fa]$ of $\ulE$} is the closed subscheme of $E$ defined by $\ulE[\Fa](S)\,=\,\{\,P\in E(S)\colon\phi_a(P)=0\text{ for all }a\in\Fa\,\}$ on any $R$-algebra $S$.

\bigskip\noindent
{\bfseries Theorem~\ref{ThmATorsion}.}
{\itshape
$\ulE[\Fa]$ is a finite locally free group scheme over $R$. It is \'etale over $R$ if and only if $\Fa+\CJ=A_R$. If $\ulM=\ulM(\ulE)$ is the associated $A$-motive then $\ulE[\Fa]$ and $\ulM/\Fa\ulM$ correspond to each other under the finite shtuka equivalence reviewed in Section~\ref{SectFinShtukas}.
}

\bigskip

If $\Fa+\CJ=A_R$ and $\bar s=\Spec\Omega$ is a geometric base point of $\Spec R$, then we also prove in Theorem~\ref{ThmEtFundGp} that $\ulE[\Fa](\Omega)$ is a free $A/\Fa$-module of rank $r$ which carries a continuous action of the \'etale fundamental group $\pi_1^\et(\Spec R,\bar s)$.

\medskip

In the final Section~\ref{SectDivLocAndMod} we turn towards the case where $\epsilon\subset A$ is a maximal ideal and where all elements of $\gamma(\epsilon)\subset R$ are nilpotent. In this case, we can associate with an $A$-motive $\ulM$ over $R$ a \emph{local shtuka} $\ulHM_\epsilon(\ulM)$; see Example~\ref{ExLocShtuka} and with an abelian Anderson $A$-module $\ulE$ a \emph{divisible local Anderson module} $\ulE[\epsilon^\infty]:=\dirlim\ulE[\epsilon^n]$ in the sense of \cite{HartlSingh}; see Definition~\ref{DefZDivGp} and Theorem~\ref{ThmDivLocAModOfAMod}. If $\ulM=\ulM(\ulE)$ then $\ulHM_\epsilon(\ulM)$ and $\ulE[\epsilon^\infty]$ correspond to each other under the local shtuka equivalence from \cite{HartlSingh}; see Theorems~\ref{ThmEqZDivGps} and \ref{ThmDivLocAModOfAMod}.

\bigskip\noindent
{\it Acknowledgments.} The author acknowledges support of the DFG (German Research Foundation) in form of SFB 878. 

\subsection*{Notation}
Throughout this article we denote by
\begin{tabbing}
$\sigma^*M:=M\otimes_{R,\,\Frob_{q,R}}R$ \=\kill
%$A:=\Gamma(C\setminus\{\infty\},\CO_C)$\quad \=\kill
$\BN_{>0}$ and $\BN_0$\>the positive, respectively the non-negative integers,\\[1mm]
$\BF_q$\> a finite field with $q$ elements and characteristic $p$,\\[1mm]
$C$\> a smooth projective geometrically irreducible curve over $\BF_q$,\\[1mm]
$Q:=\BF_q(C)$\> the function field of $C$,\\[1mm]
$\infty$\> a fixed closed point of $C$,\\[1mm]
$\BF_\infty$\> the residue field of the point $\infty\in C$,\\[1mm]
$A:=\Gamma(C\setminus\{\infty\},\CO_C)$\> the ring of regular functions on $C$ outside $\infty$,\\[1mm]
$(R,\gamma\colon A\to R)$\> an \emph{$A$-ring}, that is a ring $R$ with a ring homomorphism $\gamma\colon A\to R$,\\[1mm]
$A_R:=A\otimes_{\BF_q}R$,\\[1mm]
$\sigma:=\id_A\otimes\Frob_{q,R}$\> the endomorphism of $A_R$ with $a\otimes b\mapsto a\otimes b^q$ for $a\in A$ and $b\in R$,\\[1mm]
$\sigma^*M:=M\otimes_{R,\,\Frob_{q,R}}R$\>$=M\otimes_{A_R,\,\sigma}A_R$ the Frobenius pullback for an $A_R$-module $M$,\\[1mm]
$\sigma^*V:=V\otimes_{R,\,\Frob_{q,R}}R$ \> the Frobenius pullback more generally for an $R$-module $V$,\\[1mm]
$\sigma_V^*v:=v\otimes1\in\sigma^*V$\> for an element $v\in V$,\\[1mm]
$\sigma^*f:=f\otimes\id\colon\sigma^*M\to\sigma^*N$ for a morphism $f\colon M\to N$ of $A_R$-modules, \\[1mm]
$\CJ:=\ker(\gamma\otimes\id_R\colon A_R\to R)=(a\otimes1-1\otimes\gamma(a)\colon a\in A)\subset A_R$. 
\end{tabbing}
Note that $\gamma$ makes $R$ into an $\BF_q$-algebra. Further note that $\CJ$ is a locally free $A_R$-module of rank $1$. Indeed, $\CJ=I\otimes_{A_A}A_R$ for the ideal $I:=(a\otimes1-1\otimes a\colon a\in A)\subset A_A=A\otimes_{\BF_q}A$. The latter is a locally free $A_A$-module of rank $1$ by Nakayama's lemma, because $I\otimes_{A_A}A_A/I=I/I^2=\Omega^1_{A/\BF_q}$ is a locally free module of rank $1$ over $A_A/I=A$.

\medskip

We will sometimes reduce from the ring $A$ to the polynomial ring $\BF_q[t]$ by applying the following

\begin{lemma}\label{LemmaF_q[t]}
Let $a\in A\setminus\BF_q$ and let $\BF_q[t]$ be the polynomial ring in the variable $t$. Then the homomorphism $\BF_q[t]\to A,\,t\mapsto a$ makes $A$ into a finite free $\BF_q[t]$-module of rank equal to $-[\BF_\infty:\BF_q]\ord_\infty(a)$, where $\ord_\infty$ is the normalized valuation of the discrete valuation ring $\CO_{C,\infty}$.
\end{lemma}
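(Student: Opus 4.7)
The plan is to interpret $a$ as a nonconstant morphism of smooth projective curves $a\colon C\to\BP^1_{\BF_q}$, where we view $a\in Q=\BF_q(C)$ as a rational function on $C$ and send the point $\infty\in C$ to $\infty\in\BP^1$. Since $a\in A=\Gamma(C\setminus\{\infty\},\CO_C)$ is regular away from $\infty$, its pole divisor is supported at $\infty$, so $a^{-1}(\BA^1_{\BF_q})=C\setminus\{\infty\}$. On coordinate rings, this morphism restricts to $\BF_q[t]\to A$, $t\mapsto a$, which is the map in question.

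First I would invoke the standard fact that any nonconstant morphism between smooth projective irreducible curves is finite and flat (since the target is a regular $1$-dimensional scheme, flatness follows from the fact that the local rings are DVRs and the morphism is dominant). Restricting this finite flat morphism to the open subscheme $\BA^1_{\BF_q}\subset\BP^1_{\BF_q}$ gives that $C\setminus\{\infty\}\to\BA^1_{\BF_q}$ is finite flat, i.e.\ $A$ is a finite flat $\BF_q[t]$-module. Because $\BF_q[t]$ is a PID, finite flat modules are free, so $A$ is finite free over $\BF_q[t]$.

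It remains to compute the rank, which equals the degree $\deg(a\colon C\to\BP^1)$. I would compute this degree by looking at the fiber over the point $\infty\in\BP^1$. Since $a$ has poles only at $\infty\in C$, the set-theoretic preimage of $\infty\in\BP^1$ is exactly $\{\infty\}\subset C$. Pulling back the uniformizer $1/t$ of $\CO_{\BP^1,\infty}$ to $\CO_{C,\infty}$ gives an element of valuation $-\ord_\infty(a)$, which is the ramification index at $\infty$. Combining this with the residue-field extension degree $[\BF_\infty:\BF_q]$ yields
\[
\deg(a\colon C\to\BP^1)\;=\;e\cdot f\;=\;(-\ord_\infty(a))\cdot[\BF_\infty:\BF_q],
\]
which is the claimed rank.

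No step here is really an obstacle; the only point requiring minor care is observing that $a\notin\BF_q$ forces $a$ to be nonconstant as a rational function on $C$ (since $C$ is geometrically irreducible, the constants in $Q$ are exactly $\BF_q$) and that the pole divisor of a nonconstant such $a$ is nontrivial and supported at $\infty$, so that $\ord_\infty(a)<0$ and the stated rank is a positive integer.
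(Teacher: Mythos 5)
Your proposal is correct and follows essentially the same route as the paper: both interpret $a$ as a finite morphism $C\to\BP^1_{\BF_q}$ whose fiber over the pole of $t$ is exactly $\{\infty\}$, and compute the degree there as the product of the ramification index $-\ord_\infty(a)$ and the residue degree $[\BF_\infty:\BF_q]$. The only cosmetic difference is that you spell out the passage from finite locally free to finite free via the PID property of $\BF_q[t]$, which the paper leaves implicit.
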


\begin{proof}
If $\ord_\infty(a)=0$ then $a$ would have no pole on the curve $C$, hence would be constant. Since $C$ is geometrically irreducible this would imply $a\in\BF_q$ which was excluded. Therefore $a$ is non-constant and defines a finite surjective morphism of curves $f\colon C\to\BP^1_{\BF_q}$ with $\Spec A\to\Spec\BF_q[t]=\BP^1_{\BF_q}\setminus\{\infty'\}$, where $\infty'\in\BP^1_{\BF_q}$ is the pole of $t$. By \cite[Proposition~15.31]{GoertzWedhorn} its degree can be computed in the fiber $f^{-1}(\infty')=\{\infty\}$ as $\deg f=[\BF_\infty:\BF_{\infty'}]\!\cdot\!e_f(\infty)$ where $\BF_{\infty'}=\BF_q$ and $e_f(\infty)=\ord_\infty f^*(\tfrac{1}{t})=-\ord_\infty(a)$ is the ramification index of $f$ at $\infty$. Since $\Spec A=f^{-1}(\Spec\BF_q[t])$ we conclude that $A$ is a finite (locally) free $\BF_q[t]$-module of rank $-[\BF_\infty:\BF_q]\ord_\infty(a)$.
\end{proof}

%%%%%%%%%%%%%%%%%%%%%%%%%%%%%%%%%%%%%%%%%%%%%%%%%%%%%%%%%%%%%%%%%%%%%%
%
%    A-Motives
%
%%%%%%%%%%%%%%%%%%%%%%%%%%%%%%%%%%%%%%%%%%%%%%%%%%%%%%%%%%%%%%%%%%%%%%

\section{$A$-Motives}\label{SectAMotives}
\setcounter{equation}{0}

%Let $R$ be a ring equipped with a ring homomorphism $\gamma\colon A\to R$. In particular $R$ is an $\BF_q$-algebra. We set $A_R:=A\otimes_{\BF_q}R$ and $\CJ:=(a\otimes1-1\otimes\gamma(a)\colon a\in A)=\ker(\gamma\otimes\id_R\colon A_R\to R)\subset A_R$. Note that $\CJ$ is generated by the elements $a\otimes1-1\otimes\gamma(a)$ where $a$ lies in a generating system of the ring $A$. In particular $\CJ$ is finitely generated, because the ring $A$ is. We consider the endomorphism $\sigma:=\id_A\otimes\Frob_{q,R}\colon a\otimes b\mapsto a\otimes b^q$ of $A_R$. For an $A_R$-module $M$ we set $\sigma^*M:=M\otimes_{A_R,\,\sigma}A_R$ and for a morphism $f\colon M\to N$ of $A_R$-modules we set $\sigma^*f:=f\otimes\id\colon\sigma^*M\to\sigma^*N$. More generally, for an $R$-module $V$ we set $\sigma^*V:=V\otimes_{R,\,\Frob_{q,R}}R$.

We keep the notation introduced in the introduction and generalize Definition~\ref{DefEffectiveAMotive} to not necessarily effective $A$-motives.

\begin{definition}\label{DefAMotive}
An \emph{$A$-motive of rank $r$ over an $A$-ring $R$} is a pair $\ulM=(M,\tau_M)$ consisting of a locally free $A_R$-module $M$ of rank $r$ and an isomorphism outside the zero locus $\Var(\CJ)$ of $\CJ$ between the induced finite locally free sheaves $\tau_M\colon \sigma^*M|_{\Spec A_R\setminus\Var(\CJ)}\isoto M|_{\Spec A_R\setminus\Var(\CJ)}$.

A \emph{morphism} $f\colon(M,\tau_M)\to (N,\tau_N)$ between $A$-motives is an $A_R$-homomorphism $f\colon M\to N$ which satisfies $f\circ\tau_M=\tau_N\circ\sigma^*f$. We write $\Hom_R(\ulM,\ulN)$ for the $A$-module of morphisms between $\ulM$ and $\ulN$. The elements of $\QHom_R(\ulM,\ulN):=\Hom_R(\ulM,\ulN)\otimes_AQ$ are called \emph{quasi-morphisms}. We also set $\End_R(\ulM):=\Hom_R(\ulM,\ulM)$ and $\QEnd_R(\ulM):=\QHom_R(\ulM,\ulM)=\End_R(\ulM)\otimes_A Q$.
\end{definition}

To explain the relation between Definitions~\ref{DefEffectiveAMotive} and \ref{DefAMotive} we begin with a
 
\begin{lemma}\label{LemmaInjCokerLocFree}
Let $f\colon M\to N$ be a homomorphism between finite locally free $A_R$-modules $M$ and $N$ of the same rank, and assume that $\coker f$ is a finitely generated $R$-module, then $f$ is injective and $\coker f$ is a finite locally free $R$-module.
\end{lemma}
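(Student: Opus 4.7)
My plan is to reduce to the local case and use the adjugate matrix. The claim is local on $\Spec A_R$, so I may assume $M=N=A_R^r$ and $f$ is represented by an $r\times r$ matrix $F$. Set $d:=\det F\in A_R$. The adjugate identity $\operatorname{adj}(F)\cdot F = d\cdot I_r$ then gives: (a) $\coker f$ is annihilated by $d$, and (b) any $m\in A_R^r$ with $Fm=0$ satisfies $dm=0$. Injectivity of $f$ therefore reduces to showing that $d$ is a non-zerodivisor in $A_R$.

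To locate $\Supp(\coker f)\subseteq\Spec A_R$, note that (a) yields $\Supp(\coker f)\subseteq\Var(d)$; conversely, if $\Fp\notin\Var(d)$ then $F$ is invertible in $(A_R)_\Fp$, so $(\coker f)_\Fp=0$. Hence $\Var(d)=\Supp(\coker f)$ as subsets of $\Spec A_R$. Since $\coker f$ is finitely generated as an $R$-module by hypothesis, the image of $A_R$ in $\End_R(\coker f)$ is also a finitely generated $R$-module, exhibiting $\Supp(\coker f)$ as contained in a closed subscheme of $\Spec A_R$ that is finite over $\Spec R$.

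The key geometric input is that $A=\Gamma(C\setminus\{\infty\},\CO_C)$ is a smooth $1$-dimensional $\BF_q$-algebra, so $\Spec A_R\to\Spec R$ is smooth of relative dimension $1$ with every fibre $\Spec(A\otimes_{\BF_q}\kappa(\Fp))$ a regular $1$-dimensional scheme. By the standard flat base change identity
\[
\operatorname{Ass}_{A_R}(A_R) \;=\; \bigcup_{\Fp\in\operatorname{Ass}(R)} \operatorname{Ass}_{A_R\otimes\kappa(\Fp)}\bigl(A_R\otimes\kappa(\Fp)\bigr),
\]
together with the fact that a regular ring has no embedded primes, every associated prime of $A_R$ is the generic point of some fibre of $\Spec A_R\to\Spec R$. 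But $\Supp(\coker f)=\Var(d)$, being contained in a closed subscheme finite over $\Spec R$, consists only of primes that are closed in their fibre, and so misses every such generic point. Therefore $d$ lies in no associated prime of $A_R$, i.e.\ $d$ is a non-zerodivisor, and by (b), $f$ is injective.

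For the local freeness of $\coker f$ over $R$: since $A$ is flat over $\BF_q$, so is $A_R$ over $R$, hence $M$ and $N$ are $R$-flat. The now-exact sequence $0\to M\to N\to\coker f\to 0$ and the $\Tor^R$-long-exact sequence give
\[
\Tor_1^R(\coker f,\,R/\Fa) \;=\; \ker\bigl(M\otimes_R R/\Fa\longto N\otimes_R R/\Fa\bigr)
\]
for every ideal $\Fa\subseteq R$. The right-hand map is the base change of $f$ along $R\to R/\Fa$, still a morphism between finite locally free $A_{R/\Fa}$-modules of the same rank with $R/\Fa$-finite cokernel; by the injectivity we just proved, applied over $R/\Fa$, its kernel vanishes. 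So $\coker f$ is $R$-flat, and being finitely generated over $R$, finite locally free. I expect the main obstacle to be the associated-primes step: one must justify the flat base change formula for $\operatorname{Ass}(A_R)$ and the reduction of the support to closed points in fibres, both standard but requiring attention outside the Noetherian setting.
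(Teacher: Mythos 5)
Your strategy for injectivity (reduce to a matrix $F$, use the adjugate, and show $d=\det F$ is a non-zerodivisor by locating $\Supp(\coker f)$ inside a subscheme finite over $\Spec R$) is genuinely different from the paper's, but it has two real gaps, and both are repaired in the paper by one device that your proof lacks: Cayley--Hamilton applied to the endomorphism $t$ of the finite $R$-module $\coker f$ (for a chosen $t\in A\setminus\BF_q$, so that $A_R$ is finite free over $R[t]$), which produces a \emph{monic} polynomial $g\in R[t]$ annihilating $\coker f$. The associated-primes step is not merely a point ``requiring attention'' outside the Noetherian setting --- it fails as stated. The lemma allows an arbitrary $A$-ring $R$, and over a non-Noetherian ring the zerodivisors of $A_R$ need not be covered by the associated primes of $A_R$ (there may be none); one must pass to weakly associated primes, and the flat base change formula you invoke is precisely the statement that becomes delicate there. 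The paper avoids all of this: $g$ is monic, hence a non-zerodivisor on the free $R[t]$-module $A_R$ and on the locally free $R[t]$-module $M$ over \emph{any} ring $R$, and $M[\tfrac{1}{g}]\to N[\tfrac{1}{g}]$ is a surjection of finite locally free $R[t][\tfrac{1}{g}]$-modules of equal rank, hence an isomorphism, so $M\into M[\tfrac{1}{g}]$ gives injectivity of $f$. (If you want to keep your route: $\Var(d)=\Supp(\coker f)\subseteq\Var(g)$ gives $g^k\in(d)$ locally, and $dx=0$ then forces $g^kx=0$, hence $x=0$ --- but this reintroduces the monic polynomial anyway.)

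The second gap is the final sentence: ``$\coker f$ is $R$-flat, and being finitely generated over $R$, finite locally free.'' Finitely generated plus flat does \emph{not} imply finite locally free over a general ring: for $R=\prod_n\BF_q$ and $I=\bigoplus_n\BF_q$ the module $R/I$ is cyclic and flat but not projective. You need finite \emph{presentation} of $\coker f$ over $R$, which you never establish; the paper gets it from the exact sequence $M/gM\to N/gN\to\coker f\to0$, in which $N/gN$ is a finite locally free $R$-module because $g$ is monic. Your Tor computation itself --- testing against $R/\Fa$ for every ideal $\Fa$ and reapplying the injectivity statement over the $A$-ring $R/\Fa$ --- is correct and is a nice uniform strengthening of the paper's test against residue fields at maximal ideals; combined with finite presentation it would close the argument.
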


\begin{proof}
To make the proof more transparent, we choose an element $t\in A\setminus\BF_q$. Then $A$ is a finite free $\BF_q[t]$-module by Lemma~\ref{LemmaF_q[t]}, and $M$ and $N$ are finite locally free modules over $R[t]$. Also $t$ acts as an endomorphism of the finite $R$-module $\coker f$. By the Cayley-Hamilton Theorem \cite[Theorem~4.3]{Eisenbud} there is a monic polynomial $g\in R[t]$ which annihilates $\coker f$. This implies on the one hand that
\[
M/gM \longto N/gN \longto \coker f \longto 0
\]
is exact, and therefore $\coker f$ is an $R$-module of finite presentation, because $R[t]/(g)$ is a finite free $R$-module of rank $\deg_tg$. On the other hand it implies that $M[\tfrac{1}{g}] \onto N[\tfrac{1}{g}]$ is an epimorphism, whence an isomorphism by \cite[Corollary~8.12]{GoertzWedhorn}, because $M$ and $N$ are finite locally free over $R[t]$ of the same rank. Since $g$ is a non-zero divisor on $R[t]$ and thus also on $M$, the localization map $M\to M[\tfrac{1}{g}]$ is injective, and hence also $f$ is injective.

We obtain the exact sequence $0\to M\to N\to\coker f\to0$, which yields for every maximal ideal $\Fm\subset R$ with residue field $k=R/\Fm$ the exact sequence
\[
0\;\longto\;\Tor_1^R(k,\coker f)\;\longto\; M\otimes_Rk\;\longto\; N\otimes_Rk\;\longto\;(\coker f)\otimes_Rk\;\longto\;0\,.
\]
Again the $k[t]$-modules $M\otimes_Rk$ and $N\otimes_Rk$ are locally free of the same rank and $(\coker f)\otimes_Rk$ is a torsion $k[t]$-module, annihilated by $g$. Since $k[t]$ is a PID, this implies that $M\otimes_Rk\to N\otimes_Rk$ is injective and so $\Tor_1^R(k,\coker f)=(0)$. Since $\coker f$ is finitely presented, it is locally free of finite rank by Nakayama's Lemma; e.g.\ \cite[Exercise~6.2]{Eisenbud}.
\end{proof}

For the next proposition note that $\CJ$ is an invertible sheaf on $\Spec A_R$ as we remarked before Lemma~\ref{LemmaF_q[t]}. 

\begin{proposition}\label{PropAMotiveEffective}
\begin{enumerate}
\item \label{PropAMotiveEffective_A}
Let $(M,\tau_M)$ be an $A$-motive. Then there exist integers $e,e'\in\BZ$ such that $\CJ^e\cdot\tau_M(\sigma^*M)\subset M$ and $\CJ^{e'}\cdot\tau_M^{-1}(M)\subset\sigma^*M$. For any such $e,e'$ the induced $A_R$-homomorphism $\tau_M\colon\CJ^e\cdot\sigma^*M\to M$ is injective, and the quotient $M/\tau_M(\CJ^e\cdot\sigma^*M)$ is a locally free $R$-module of finite rank, which is annihilated by $\CJ^{e+e'}$. 
\item \label{PropAMotiveEffective_B}
An $A$-motive $(M,\tau_M)$ is an effective $A$-motive, if and only if $\tau_M(\sigma^*M)\subset M$.
\item \label{PropAMotiveEffective_C}
Let $(M,\tau_M)$ be an effective $A$-motive over $R$. Then $(M,\tau_M|_{\Spec A_R\setminus\Var(\CJ)})$ is an $A$-motive. Moreover, $\tau_M\colon\sigma^*M\to M$ is injective and $\coker\tau_M$ is a finite locally free $R$-module.
\item \label{PropAMotiveEffective_D}
Let $\ulM=(M,\tau_M)$ be an effective $A$-motive over a field $k$. Then $\ulM$ has dimension $\dim_k\coker\tau_M$.
\end{enumerate}
\end{proposition}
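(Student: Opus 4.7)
The plan is to prove (a) first --- this is the heart of the proposition --- and deduce (b)--(d) from it.

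For (a), the guiding idea is that $\CJ$ is an invertible ideal sheaf on $\Spec A_R$, so I would cover $\Spec A_R$ by finitely many affine opens that simultaneously trivialize $\CJ$ (as $(f_i)$ for a regular element $f_i$), $M$, and $\sigma^*M$. On each piece $\tau_M$ becomes an isomorphism after inverting $f_i$, so the images of finitely many basis elements of $\sigma^*M$ each have poles bounded by some power $f_i^{e_i}$; taking the maximum over the finite cover yields a single integer $e$ with $\CJ^e\cdot\tau_M(\sigma^*M)\subset M$. The same argument applied to $\tau_M^{-1}$ produces $e'$ with $\CJ^{e'}\cdot\tau_M^{-1}(M)\subset\sigma^*M$. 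Composing these inclusions gives $\CJ^{e+e'}\cdot M\subset\tau_M(\CJ^e\cdot\sigma^*M)$, so the cokernel $N:=M/\tau_M(\CJ^e\sigma^*M)$ is annihilated by $\CJ^{e+e'}$ and hence a module over $A_R/\CJ^{e+e'}$. The latter ring admits a filtration by powers of $\CJ$ whose successive subquotients $\CJ^i/\CJ^{i+1}$ are invertible $A_R/\CJ=R$-modules, so $A_R/\CJ^{e+e'}$ is a finite locally free $R$-module of rank $e+e'$; consequently $N$ is finitely generated as an $R$-module. Then I apply Lemma~\ref{LemmaInjCokerLocFree} to $\tau_M:\CJ^e\sigma^*M\to M$ --- a map between finite locally free $A_R$-modules of rank $r$ whose cokernel is finitely generated over $R$ --- to conclude both the injectivity of $\tau_M$ on $\CJ^e\sigma^*M$ and the $R$-local-freeness of $N$ in one stroke.

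Parts (b) and (c) then follow quickly. In (b), one direction is tautological, while the converse applies (a) with $e=0$: the inclusion $\CJ^{e'}M\subset\tau_M(\sigma^*M)$ means $\coker\tau_M$ is killed by $\CJ^{e'}$. In (c), effectiveness forces $\coker\tau_M$ to be killed by $\CJ^n$, so $\tau_M$ is surjective away from $\Var(\CJ)$; being a surjection between finite locally free sheaves of the same rank, it is an isomorphism there, which makes $(M,\tau_M)$ an $A$-motive in the sense of Definition~\ref{DefAMotive}. The injectivity and $R$-local-freeness of $\coker\tau_M$ again follow from (a) with $e=0$.

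For (d), over a field $k$ the module $\coker\tau_M$ is automatically a finite-dimensional $k$-vector space, so free of rank $d:=\dim_k\coker\tau_M$, and only the $\CJ^d$-annihilation needs verification. Since $A_k$ is the coordinate ring of a smooth affine curve over $k$, the maximal ideal $\CJ$ (the graph of $\gamma$) localizes to a DVR $(A_k)_\CJ$ in which $\CJ$ is a uniformizer. Consider the descending $\CJ$-adic filtration $V_i:=\CJ^i\coker\tau_M$; it stabilizes at $0$ because $\coker\tau_M$ is $\CJ^n$-torsion, and Nakayama at the local ring $(A_k)_\CJ$ shows that as long as $V_i\ne 0$ one has $V_{i+1}=\CJ V_i\subsetneq V_i$. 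Letting $n$ be the smallest index with $V_n=0$, one obtains $\dim_k V_i/V_{i+1}\ge 1$ for $0\le i<n$, whence $d=\sum_{i<n}\dim_k V_i/V_{i+1}\ge n$, and therefore $\CJ^d\subset\CJ^n$ annihilates $\coker\tau_M$.

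The main obstacle is part (a): one must control the non-principal invertible ideal $\CJ$ across the possibly non-Noetherian ring $A_R$ while extracting $R$-linear rather than merely $A_R$-linear structure on the cokernel. The decisive leverage is that $A_R/\CJ^{e+e'}$ is finite locally free over $R$, which converts $\CJ$-annihilation into $R$-finite generation and unlocks Lemma~\ref{LemmaInjCokerLocFree}.
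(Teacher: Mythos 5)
Your proposal is correct and follows essentially the same route as the paper: for (a) you bound the poles of $\tau_M$ and $\tau_M^{-1}$ locally on a finite affine cover, deduce $\CJ^{e+e'}\cdot M\subset\tau_M(\CJ^e\cdot\sigma^*M)$, and invoke Lemma~\ref{LemmaInjCokerLocFree}, exactly as the paper does, with (b) and (c) then deduced in the same way. The only (harmless) divergence is in (d), where you argue via Nakayama at the DVR $(A_k)_\CJ$ and the strictly decreasing $\CJ$-adic filtration of $\coker\tau_M$, whereas the paper simply applies the Cayley--Hamilton theorem to a local generator $h$ of $\CJ$ acting nilpotently on the $d$-dimensional $k$-vector space $\coker\tau_M$.
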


\begin{proof}
\ref{PropAMotiveEffective_A} Working locally on affine subsets of $\Spec A_R$ we may assume that $\CJ$ is generated by a non-zero divisor $h\in\CJ$. By \cite[I, Th\'eor\`eme~1.4.1(d1)]{EGA} we obtain for every generator $m$ of the $A_R$-module $\sigma^*M$ an integer $n$ such that locally $h^n\cdot\tau_M(m)\in M$. Taking $e$ as the maximum of the $n$ when $m$ runs through a finite generating system of $\sigma^*M$, yields $\CJ^e\cdot\tau_M(\sigma^*M)\subset M$. The inclusion $\CJ^{e'}\cdot\tau_M^{-1}(M)\subset\sigma^*M$ is proved analogously.

Let $e$ and $e'$ be any integers with $\tau_M(\CJ^e\cdot\sigma^*M)\subset M$ and $\tau_M^{-1}(\CJ^{e'}\cdot M)\subset\sigma^*M$, whence $\CJ^{e+e'}\cdot M\subset\tau_M(\CJ^e\cdot\sigma^*M)$. %An element $m\in\sigma^*M$ whose restriction to $\Spec A_R\setminus\Var(\CJ)$ is zero, locally satisfies $h^n\cdot m=0$ for some $n$ by \cite[I, Th\'eor\`eme~1.4.1(d2)]{EGA}. Since $h$ is a non-zero divisor, this implies $m=0$. It follows that $\sigma^*M\to\Gamma(\Spec A_R\setminus\Var(\CJ),\sigma^*M)$ is injective, and hence $\tau_M\colon\CJ^e\cdot\sigma^*M\to M$ is injective.
%Let further $e'$ be an integer with $\tau_M^{-1}(\CJ^{e'}\cdot M)\subset\sigma^*M$, whence $\CJ^{e+e'}\cdot M\subset\tau_M(\CJ^e\cdot\sigma^*M)$. 
Then $M/\tau_M(\CJ^e\cdot\sigma^*M)$ is annihilated by $\CJ^{e+e'}$, and hence a finite module over $A_R/\CJ^{e+e'}$ and over $R$. Therefore $\tau_M\colon\CJ^e\cdot\sigma^*M\to M$ is injective, and the quotient $M/\tau_M(\CJ^e\cdot\sigma^*M)$ is a finite locally free $R$-module by Lemma~\ref{LemmaInjCokerLocFree}.

\medskip\noindent
\ref{PropAMotiveEffective_C} Since $\CJ^n\cdot\coker\tau_M=(0)$, the map $\tau_M|_{\Spec A_R\setminus\Var(\CJ)}$ is an epimorphism between locally free sheaves of the same rank, and hence an isomorphism by \cite[Corollary~8.12]{GoertzWedhorn}. Thus $\ulM$ is an $A$-motive and the remaining assertions follow from \ref{PropAMotiveEffective_A}. Also \ref{PropAMotiveEffective_B} follows directly.

\medskip\noindent
\ref{PropAMotiveEffective_D} Set $d:=\dim_k\coker\tau_M$. Since every $h\in\CJ$ which generates $\CJ$ locally on $\Spec A_k$ is nilpotent on the $k$-vector space $\coker\tau_M$, it satisfies $h^d=0$ by the Cayley-Hamilton theorem from linear algebra. We conclude that $\CJ^d\cdot\coker\tau_M=(0)$ and $\ulM$ has dimension $d$.
\end{proof}

\begin{proposition}\label{PropNoetherian}
\begin{enumerate}
\item \label{PropNoetherian_A}
If $S$ is an $R$-algebra, then $\ulM=(M,\tau_M)\longmapsto\ulM\otimes_RS:=(M\otimes_RS,\tau_M\otimes\id_S)$ defines a functor from (effective) $A$-motives of rank $r$ (and dimension $d$) over $R$ to (effective) $A$-motives of rank $r$ (and dimension $d$) over $S$.
\item \label{PropNoetherian_B}
Every $A$-motive over $R$ and every morphism $f\in\Hom(\ulM,\ulN)$ between $A$-motives over $R$ can be defined over a subring $R'$ of $R$, which via $\gamma\colon A\to R'\subset R$ is a finitely generated $A$-algebra, hence noetherian.
\end{enumerate}
\end{proposition}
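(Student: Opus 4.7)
\emph{Proof plan.} For part \ref{PropNoetherian_A}, I would proceed by direct verification that each ingredient of Definition~\ref{DefAMotive} (respectively Definition~\ref{DefEffectiveAMotive}) is preserved under the base change $A_R \to A_S := A_R \otimes_R S$. Finite local freeness of rank $r$ is stable under arbitrary base change, so $M \otimes_R S$ is finite locally free of rank $r$ over $A_S$. The canonical identification $\sigma^*(M \otimes_R S) = (\sigma^* M) \otimes_R S$ (both sides naturally equal $M \otimes_{R,\Frob_{q,R}} S$) lets one define $\tau_{\ulM \otimes_R S} := \tau_M \otimes \id_S$. Writing $\CJ_R, \CJ_S$ for the ideals from the introduction associated with the $A$-rings $R$ and $S$, the subscheme $\Var(\CJ_S) \subset \Spec A_S$ is the preimage of $\Var(\CJ_R) \subset \Spec A_R$, so an isomorphism outside $\Var(\CJ_R)$ base-changes to one outside $\Var(\CJ_S)$. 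In the effective setting, $\tau_M(\sigma^* M) \subset M$ base-changes to the analogous inclusion, $\coker \tau_{\ulM \otimes_R S} = (\coker \tau_M) \otimes_R S$ is locally free of rank $d$ over $S$, and annihilation by $\CJ_R^n$ (resp.\ $\CJ_R^d$) passes to annihilation by $\CJ_S^n$ (resp.\ $\CJ_S^d$). Functoriality on morphisms follows by transporting the relation $f \circ \tau_M = \tau_N \circ \sigma^* f$.

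For part \ref{PropNoetherian_B}, I would invoke standard ``spreading out'' results, e.g.\ from \cite[IV, \S\S 8, 11]{EGA}. First write $R = \varinjlim_\lambda R_\lambda$ as the filtered colimit of its finitely generated $A$-subalgebras; since $A$ is a Dedekind domain, in particular Noetherian, the Hilbert basis theorem gives each $R_\lambda$ and hence each $A_{R_\lambda} = A \otimes_{\BF_q} R_\lambda$ the Noetherian property, with $A_R = \varinjlim_\lambda A_{R_\lambda}$. The finitely presented $A_R$-module $M$ then descends to a finitely presented $A_{R_{\lambda_0}}$-module $M_{\lambda_0}$ for some $\lambda_0$; after enlarging $\lambda_0$ one arranges $M_{\lambda_0}$ to be finite locally free of rank $r$, this being an open condition that descends. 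The homomorphism $\tau_M$ will descend via the identification $\Hom_{A_R}(\sigma^* M, M) = \varinjlim_\lambda \Hom_{A_{R_\lambda}}(\sigma^* M_\lambda, M_\lambda)$, valid because $\sigma^* M$ is finitely presented. The remaining conditions---``isomorphism outside $\Var(\CJ)$'' in the non-effective case, and in the effective case $\tau_M(\sigma^* M) \subset M$ together with local freeness of $\coker \tau_M$ over $R$ of rank $d$ and annihilation by a power of $\CJ$---are all open/closed conditions on finitely presented modules which descend after one further enlargement. Morphisms $f \colon \ulM \to \ulN$ are then handled by the same idea: once source and target are both defined over some common $R_\mu$, the $A_R$-homomorphism $f$ descends by the same colimit identification to some $R_{\mu'} \supset R_\mu$, and the commutation $f \circ \tau_M = \tau_N \circ \sigma^* f$ is an equality in a filtered colimit of Hom-modules, hence holds at some still larger stage.

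The hard part is not any single step but the bookkeeping: several descent assertions---finite presentation of $M$, local freeness of $M$, existence of $\tau_M$, local freeness of $\coker \tau_M$, annihilation by $\CJ^d$, and the compatibility relation for a morphism---must be applied in sequence, each potentially requiring a strictly larger subalgebra. This is not a genuine obstacle, however, because the poset of finitely generated $A$-subalgebras of $R$ is filtered: a single $R' = R_\lambda$ containing all the intermediate enlargements may be chosen, and it is finitely generated (hence Noetherian) over $A$, as required.
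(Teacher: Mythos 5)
Your proof is correct, and for part (a) it amounts to the same routine verification the paper dismisses as obvious. For part (b), however, you take a genuinely different route. The paper argues by hand: it chooses a presentation $A_R^{\oplus n_1}\xrightarrow{U}A_R^{\oplus n_0}\to M\to 0$, encodes the splitting of the presentation, the maps $\tau_M$, $\tau_M^{-1}$, and the identities $\tau_M\circ\tau_M^{-1}=\id$, $\tau_M^{-1}\circ\tau_M=\id$ in finitely many matrices over $A_R$, and lets $R'$ be the $A$-subalgebra generated by all entries of those matrices; the motive over $R'$ is then reconstructed explicitly as $\coker U$ together with the maps induced by the lifted matrices. You instead invoke the general formalism of filtered colimits (EGA IV, \S 8): write $R=\varinjlim R_\lambda$ over its finitely generated $A$-subalgebras and descend, in sequence, finite presentation, local freeness, the map $\tau_M$, its invertibility off $\Var(\CJ)$, and the commutation relation for a morphism. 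Both work; your approach is shorter to state and scales to any property known to be of finite presentation, while the paper's is self-contained and produces $R'$ concretely without citing the limit machinery. Two small points worth making explicit in your version: the open set $\Spec A_R\setminus\Var(\CJ)$ is quasi-compact (since $A$ is a finitely generated $\BF_q$-algebra, $\CJ$ is finitely generated), which is needed to descend the isomorphism condition; and each subring $R_\lambda\subset R$ is automatically stable under $x\mapsto x^q$, so $\sigma^*$ commutes with the colimit and your identification of Hom-modules is legitimate. Neither is a gap, just bookkeeping you should not leave implicit.
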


\begin{proof}
\ref{PropNoetherian_A} This is obvious. 

\medskip\noindent
\ref{PropNoetherian_B} Every $A$-motive $\ulM=(M,\tau_M)$ has a presentation of the form $A_R^{\oplus n_1}\xrightarrow{\;U\,} A_R^{\oplus n_0}\xrightarrow{\;\,\rho\;} M\longto 0$. Since $M$ is locally free over $A_R$, there is a section $s$ of the epimorphism $\rho$. It corresponds to an endomorphism $S$ of $A_R^{\oplus n_0}$ with $SU=0$ such that there is a map $W\colon A_R^{\oplus n_0}\to A_R^{\oplus n_1}$ with $S-\Id=UW$. The isomorphism $\tau_M$ lifts to diagram
\begin{equation}\label{EqNoetherian}
\xymatrix @C+1pc { (\sigma^*A_R^{\oplus n_1})|_{\Spec A_R\setminus\Var(\CJ)} \ar[r]^{\TS\sigma^*U} \ar[d]_{\TS T_1} &  (\sigma^*A_R^{\oplus n_0})|_{\Spec A_R\setminus\Var(\CJ)} \ar[r]^{\TS\sigma^*\rho} \ar[d]_{\TS T_0} & \sigma^*M|_{\Spec A_R\setminus\Var(\CJ)} \ar[r] \ar[d]_{\TS\tau_M} & 0\,\,\\
A_R^{\oplus n_1}|_{\Spec A_R\setminus\Var(\CJ)} \ar[r]^{\TS U} &  A_R^{\oplus n_0}|_{\Spec A_R\setminus\Var(\CJ)} \ar[r]^{\TS\rho} & M|_{\Spec A_R\setminus\Var(\CJ)} \ar[r] & 0 \,.
}
\end{equation}
Likewise $\tau_M^{-1}$ lifts to a similar diagram with vertical morphism $T'_0$ and $T'_1$. The equations $\tau_M\circ\tau_M^{-1}=\id$ and $\tau_M^{-1}\circ\tau_M=\id$ imply the existence of matrices $V$ and $V'$ in $A_R^{n_1\times n_0}|_{\Spec A_R\setminus\Var(\CJ)}$ with $T_0\circ T'_0-\Id= U\circ V$ and $T'_0\circ T_0-\Id= \sigma^*U\circ V'$. Let $R'\subset R$ be the $A$-algebra generated by the finitely many elements of $R$ which occur in the entries of the matrices $U$, $S$, $W$, $T_0$, $T_1$, $T'_0$, $T'_1$, $V$ and $V'$. Define $M'$ as the $A_{R'}$-module which is the cokernel of $U\in A_{R'}^{n_0\times n_1}$, and define $\tau_{M'}\colon\sigma^*M'|_{\Spec A_R\setminus\Var(\CJ)}\to M'|_{\Spec A_R\setminus\Var(\CJ)}$ and $\tau_{M'}^{-1}\colon M'|_{\Spec A_R\setminus\Var(\CJ)}\to \sigma^*M'|_{\Spec A_R\setminus\Var(\CJ)}$ as the $A_{R'}$-homomorphisms given by diagram~\eqref{EqNoetherian} and its analog for $\tau_M^{-1}$. Then $M'$ is via $S$ a direct summand of $A_{R'}^{\oplus n_0}$, hence a finite locally free $A_{R'}$-module, and $\tau_{M'}$ and $\tau_{M'}^{-1}$ are inverse to each other. It follows from diagram~\eqref{EqNoetherian} that $M'\otimes_{R'}R=M$ and $\tau_{M'}\otimes\id_R=\tau_M$. 

Finally, the assertion for the morphism $f\in\Hom_R(\ulM,\ulN)$ follows from a diagram similar to \eqref{EqNoetherian} for $f$ instead of $\tau_M$.
\end{proof}

We end this section with the following observation.

\begin{proposition}\label{PropBaseChangeInj}
Let $\ulM$ and $\ulN$ be $A$-motives over $R$ and let $f\in\Hom_R(\ulM,\ulN)$ be a morphism. Then the set $X$ of points $s\in\Spec R$ such that $f\otimes\id_{\kappa(s)}=0$ in $\Hom_{\kappa(s)}\bigl(\ulM\otimes_R\kappa(s),\ulN\otimes_R\kappa(s)\bigr)$ is open and closed, but possibly empty. Let $\Spec\wt R\subset\Spec R$ be this set, then $f\otimes\id_{\wt R}=0$ in $\Hom_{\wt R}\bigl(\ulM\otimes_R\wt R,\ulN\otimes_R\wt R\bigr)$. In particular if $\Spec R$ is connected and $S\ne(0)$ is an $R$-algebra, then the map $\Hom_R(\ulM,\ulN)\to\Hom_S(\ulM\otimes_RS,\ulN\otimes_RS),\,f\mapsto f\otimes\id_S$ is injective. %The same is true for abelian Anderson $A$-modules.
\end{proposition}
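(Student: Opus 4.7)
I would reduce to $R$ noetherian via Proposition~\ref{PropNoetherian}\ref{PropNoetherian_B} and set $H:=\Hom_{A_R}(M,N)$, a finitely generated projective $A_R$-module. The strategy is to show $X$ is both closed and open in $\Spec R$; then $f\otimes\id_{\wt R}=0$ will follow by gluing the local vanishings produced in the openness step, and the injectivity claim reduces to the standard observation that a nonempty clopen of the connected $\Spec R$ equals $\Spec R$.

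\emph{Closedness of $X$.} Since $A_R$ is a free $R$-module with basis any $\BF_q$-basis $(a_\alpha)$ of $A$, $H$ embeds as an $R$-direct summand of a free $R$-module $A_R^N$. The specific element $f\in H$ is a finite $R$-linear combination of ambient basis vectors $a_\alpha\otimes e_i$; writing $f=\sum r_{\alpha,i}\,(a_\alpha\otimes e_i)$, the condition $f\otimes\id_{\kappa(s)}=0$ is equivalent to the simultaneous vanishing of the finitely many $r_{\alpha,i}\in R$ at $s$, which cuts out $X$ as a closed subset of $\Spec R$.

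\emph{Openness of $X$ (main obstacle).} Fix $s\in X$, set $\Fm:=\Fm_s$. I will show $f=0$ in the localization $H_\Fm:=H\otimes_R R_\Fm$; this suffices because the natural map $H\to H_\Fm$ then kills $f$, so some $u\in R\setminus\Fm$ satisfies $uf=0$ in $H$, giving $f\otimes\id_{R[1/u]}=0$ and the desired open neighborhood of $s$ inside $X$. To prove $f=0$ in $H_\Fm$, I exploit the $\tau$-compatibility: on $\Spec A_R\setminus\Var(\CJ)$, the relation $f\circ\tau_M=\tau_N\circ\sigma^*f$ says $f$ is a fixed point of the $\sigma$-semilinear operator $T\colon g\mapsto \tau_N\circ\sigma^*g\circ\tau_M^{-1}$. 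The key input is that $\sigma^*$ raises $R$-coefficients to the $q$-th power, so $T$ carries $\Fm^k\,H_\Fm$ into $\Fm^{qk}\cdot H_\Fm|_{\Spec A_R\setminus\Var(\CJ)}$. Starting from $f\in\Fm\,H_\Fm$ (which holds because $s\in X$) and applying $T$, we deduce $f=T(f)\in\Fm^{q}\cdot H_\Fm|_{\Spec A_R\setminus\Var(\CJ)}$; since $\CJ$ is invertible and remains so after every base change $R\to R/\Fm^{q^n}$, its local generators are non-zero divisors modulo each $\Fm^{q^n}$, so the controlled poles along $\Var(\CJ)$ can be cleared locally to yield $f\in\Fm^{q}H_\Fm$. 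Iterating, $f\in\bigcap_n\Fm^{q^n}H_\Fm$. Since $H_\Fm$ is an $R_\Fm$-direct summand of the free $R_\Fm$-module $A_{R_\Fm}^N=\bigoplus_{\alpha,i}R_\Fm\cdot(a_\alpha\otimes e_i)$, and $\bigcap_n\Fm^n R_\Fm=0$ by Krull's intersection theorem in the noetherian local ring $R_\Fm$, a direct entry-by-entry computation gives $\bigcap_n\Fm^n A_{R_\Fm}^N=0$, whence $f=0$ in $H_\Fm$.

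\emph{Consequences.} The local vanishings $f\otimes\id_{R[1/u_s]}=0$ for $s\in X$ glue to $f\otimes\id_{\wt R}=0$ on the clopen $\Spec\wt R=X$. Finally, if $\Spec R$ is connected and $S\ne(0)$ is an $R$-algebra with $f\otimes\id_S=0$, then any $s\in\Spec R$ in the image of the nonempty $\Spec S$ satisfies $f\otimes\id_{\kappa(s)}=0$, so $X\ne\emptyset$; being clopen in the connected $\Spec R$, $X=\Spec R$, and therefore $f=0$ in $\Hom_R(\ulM,\ulN)$.
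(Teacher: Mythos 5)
Your proof is correct, and in the closedness step it coincides with the paper's: both cut out $X$ as $\Var(I)$ for the ideal $I\subset R$ generated by the finitely many $R$-coefficients of $f$ inside a free $R$-module containing $\Hom_{A_R}(M,N)$ (the paper uses split embeddings of $M,N$ into free $R[t]$-modules and a matrix $F$; you use that $\Hom_{A_R}(M,N)$ is an $R$-direct summand of some $A_R^{\oplus N}$). Where you genuinely diverge is the openness step. The paper never reduces to noetherian $R$: it multiplies $\tau_N$ and $\tau_M^{-1}$ by powers of $t-\gamma(t)$ with $e+e'$ a power of $q$, so that the fixed-point relation becomes an identity of honest matrices $(t^d-\gamma(t)^d)F=T\,\sigma(F)\,V$ over $R[t]$; a descending induction on coefficients then gives $I=I^q$ for the \emph{finitely generated} ideal $I$, and Nakayama produces $b\in 1+I$ with $b\cdot I=(0)$, so that $X\subset\Spec R[\tfrac{1}{b}]\subset X$ in one stroke. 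You instead localize at a point of $X$, iterate the semilinear operator $T$ to push $f$ into $\bigcap_n\Fp^{q^n}H_\Fp$, and invoke Krull's intersection theorem, which is what forces your preliminary reduction to noetherian $R$ via Proposition~\ref{PropNoetherian}\ref{PropNoetherian_B} (a reduction that does work, since vanishing of a linear map over a field is insensitive to field extension, so $X$ is the preimage of the corresponding set over the noetherian subring). Both arguments exploit the same phenomenon — $\sigma$ raises $R$-coefficients to $q$-th powers — but the paper's Nakayama trick is shorter and works for arbitrary $R$ directly, while your route requires, in addition to the noetherian reduction, the denominator-clearing step along $\Var(\CJ)$; that step is sound for the reason you indicate, namely that a local generator of $\CJ$ remains a non-zero divisor on $H\otimes_RR_\Fp/\Fp^{q^n}$ because $\CJ$ stays an invertible ideal of $A_{R'}$ for every $A$-ring $R'$.
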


\begin{proof}
We fix an element $t\in A\setminus\BF_q$. Then $A$ is a finite free $\BF_q[t]$-module. By Proposition~\ref{PropAMotiveEffective} we can find integers $e,e'$ with $\CJ^e\cdot\tau_N(\sigma^*N)\subset N$ and $\CJ^{e'}\cdot\tau_M^{-1}(M)\subset\sigma^*M$, such that $d:=e+e'$ is a power of $q$. We obtain morphisms $(t-\gamma(t))^e\tau_N\colon \sigma^*N\to N$ and $(t-\gamma(t))^{e'}\tau_M^{-1}\colon M\to \sigma^*M$. So the equation $f\circ\tau_M=\tau_N\circ\sigma^*f$ implies $(t^d-\gamma(t)^d) f=(t-\gamma(t))^e\tau_N\circ\sigma^*f\circ (t-\gamma(t))^{e'}\tau_M^{-1}$. We view $M$ and $N$ as modules over $R[t]$ and replace $A_R$ by $R[t]$. Since $M$ and $N$ are finite projective $R[t]$-modules there are split epimorphisms $R[t]^{\oplus n'}\onto M$ and $R[t]^{\oplus n}\onto N$. Then $R[t]^{\oplus n'}\onto M\xrightarrow{\;f\,}N\into R[t]^{\oplus n}$ is given by a matrix $F\in R[t]^{n\times n'}$ whose entries are polynomials in $t$. Let $I\subset R$ be the ideal generated by the coefficients of all these polynomials. A prime ideal $\Fp\subset R$ belongs to the set $X$ if and only if $I\subset\Fp$. In particular $X=\Var(I)\subset\Spec R$ is closed.

On the other hand, we consider the map $R[t]^{\oplus n}\onto \sigma^*N\xrightarrow{\,(t-\gamma(t))^e\tau_N}N\into R[t]^{\oplus n}$ as a matrix $T\in R[t]^{n\times n}$ and the map $R[t]^{\oplus n'}\onto M\xrightarrow{\,(t-\gamma(t))^{e'}\tau_M^{-1}}\sigma^*M\into R[t]^{\oplus n'}$ as a matrix $V\in R[t]^{n'\times n'}$. The formula $(t^d-\gamma(t)^d) f=(t-\gamma(t))^e\tau_N\circ\sigma^*f\circ (t-\gamma(t))^{e'}\tau_M^{-1}$ implies $(t^d-\gamma(t)^d) F=T\,\sigma(F)\,V$, and it follows that the entries of the matrix $(t^d-\gamma(t)^d) F$ are polynomials in $t$ whose coefficients lie in $I^q$. If $\sum_{i=0}^\ell b_it^i$ is an entry of $F$ then $(t^d-\gamma(t)^d)\sum_{i=0}^\ell b_it^i=\sum_{i=0}^{\ell+d}(b_{i-d}-\gamma(t)^db_i)t^i$ is the corresponding entry of $(t^d-\gamma(t)^d) F$ and all $b_{i-d}-\gamma(t)^db_i\in I^q$. By descending induction on $i=\ell+d,\ldots,0$ we see that all $b_i\in I^q$. It follows that the finitely generated ideal $I\subset R$ satisfies $I=I^q$. By Nakayama's lemma \cite[Corollary~4.7]{Eisenbud} there is an element $b\in 1+I$ such that $b\!\cdot\!I=(0)$. Now let $\Fp\subset R$ be a prime ideal which lies in $X$, that is $I\subset\Fp$. Then $\Fp$ lies in the open subset $\Spec R[\tfrac{1}{b}]\subset\Spec R$ on which $F=0$ and hence $f=0$. In particular $X\subset\Spec R[\tfrac{1}{b}]\subset X$. Therefore $X$ is open and closed and $f=0$ on $X$.

Now let $\Spec R$ be connected and $S\ne(0)$ be an $R$-algebra. Let $f\in\Hom_R(\ulM,\ulN)$ be such that $f\otimes\id_S=0$ in $\Hom_S(\ulM\otimes_RS,\ulN\otimes_RS)$. Let $s\in\Spec S$ be a point and let $s'\in\Spec R$ be its image. Then $f\otimes\id_{\kappa(s')}=0$ and the set $X$ from above is non-empty. Since it is open and closed and $\Spec R$ is connected, it follows that $X=\Spec R$ and $f=0$. This proves the injectivity. %Finally the statement about abelian Anderson $A$-modules follows from Theorem~\ref{ThmMotiveOfAModule}.
\end{proof}

\begin{corollary}\label{CorHomFinGen}
Let $\ulM$ and $\ulN$ be $A$-motives over $R$ with $\Spec R$ connected. Then $\Hom_R(\ulM,\ulN)$ is a finite projective $A$-module of rank less or equal to $(\rk\ulM)\!\cdot\!(\rk\ulN)$. %The same is true for abelian Anderson $A$-modules.
\end{corollary}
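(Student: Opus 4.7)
The plan is to reduce to the case that $R=L$ is an algebraically closed field via Proposition~\ref{PropBaseChangeInj}, and then to bound $\Hom_L(\ulM_L,\ulN_L)$ by a Lang/Artin--Schreier-style argument at the generic point of $\Spec A_L$.

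First, since $\Spec R$ is connected and nonempty, pick a point $s\in\Spec R$ with residue field $L=\kappa(s)$; by Proposition~\ref{PropBaseChangeInj} the natural map $\Hom_R(\ulM,\ulN)\to\Hom_L(\ulM_L,\ulN_L)$ is an $A$-linear injection, and a further base-change to an algebraic closure $\bar L$ is again injective by the same proposition. Since $A$ is a Dedekind domain, submodules of finitely generated $A$-modules are finitely generated, and finitely generated torsion-free $A$-modules are automatically projective of well-defined rank. So the corollary reduces to: (a) $\Hom_R(\ulM,\ulN)$ is $A$-torsion-free, and (b) $\Hom_{\bar L}(\ulM_{\bar L},\ulN_{\bar L})$ is finitely generated over $A$ of rank $\leq rr'$.

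For (a): suppose $af=0$ for some $0\neq a\in A$ and $f\in\Hom_R(\ulM,\ulN)$. Since $R$ is a vector space over $\BF_q$, the ring $A_R=A\otimes_{\BF_q}R$ is free as an $A$-module via any $\BF_q$-basis of $R$, so $a\otimes 1$ is a non-zerodivisor in $A_R$. Torsion-freeness of the locally free $A_R$-module $N$ then forces $f(m)=0$ for every $m\in M$, i.e.\ $f=0$.

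For (b): because $C$ is geometrically irreducible and $\bar L$ is algebraically closed, $A_{\bar L}=A\otimes_{\BF_q}\bar L$ is a Dedekind domain with fraction field $K$. The endomorphism $\sigma=\id_A\otimes\Frob_{q,\bar L}$ extends to an automorphism of $K$ (since $\bar L$ is perfect), and writing elements as $\sum_i a_i\otimes\alpha_i$ with the $a_i$ linearly independent over $\BF_q$ shows $K^\sigma=Q$. At the generic point, $f\in\Hom_{\bar L}(\ulM_{\bar L},\ulN_{\bar L})$ induces a $K$-linear map $f_K\colon M_K\to N_K$ between $K$-vector spaces of dimensions $r,r'$, i.e.\ an element of the $rr'$-dimensional $K$-vector space $V=\Hom_K(M_K,N_K)$. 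The relation $f\circ\tau_M=\tau_N\circ\sigma^*f$ becomes the $\sigma$-semilinear fixed-point equation $f_K=\tau_N\circ\sigma(f_K)\circ\tau_M^{-1}$, so $f_K$ lies in the fixed subspace $V^\Psi$ of the $\sigma$-semilinear operator $\Psi\colon X\mapsto\tau_N\sigma(X)\tau_M^{-1}$. By linear independence of characters (Artin/Lang), $V^\Psi$ has dimension $\leq rr'$ over $K^\sigma=Q$, establishing $\rk_A\Hom_{\bar L}(\ulM_{\bar L},\ulN_{\bar L})\leq rr'$. Finite generation over $A$ then follows from a denominator bound: choosing a $Q$-basis $f_1,\dots,f_s$ of $V^\Psi$ lifted from $\Hom_{\bar L}$, the polynomial constraint $f\tau_M=\tau_N\sigma^*f$ at the level of $M,N$ (not just $M_K,N_K$) forces the $Q$-coefficients of any $f\in\Hom_{\bar L}$ in this basis to have denominators divisible only by primes of $A$ determined by $\tau_M,\tau_N$, placing $\Hom_{\bar L}$ inside a finitely generated $A$-submodule of $V^\Psi\cong Q^s$.

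The main obstacle is step (b): the Lang/Artin--Schreier rank bound for the $\sigma$-semilinear operator $\Psi$ together with the denominator bookkeeping needed to upgrade this $Q$-rank bound to genuine finite generation over $A$. A cleaner alternative is to invoke the classical statement that for any $A$-motive $\ulP$ of rank $p$ over a field, the $\tau$-invariants of $\ulP$ form a finitely generated projective $A$-module of rank $\leq p$, and apply it to the internal Hom motive $\ulM^{\vee}\otimes\ulN$ of rank $rr'$.
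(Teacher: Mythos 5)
Your proposal is correct in outline, and its skeleton is the same as the paper's: reduce to a field via Proposition~\ref{PropBaseChangeInj} (base-change injectivity when $\Spec R$ is connected), prove or quote the field case, and finish with the fact that over the Dedekind ring $A$ every submodule of a finite projective module is finite projective of no larger rank. The difference is entirely in the field case: the paper disposes of it by citing Anderson's Corollary~1.7.2 (stated there for effective motives over a field), whereas you sketch a proof --- the $Q$-rank bound via the fixed space of the $\sigma$-semilinear operator $\Psi$ on $\Hom_K(M_K,N_K)$ at the generic point of $A_{\bar L}$, using $K^\sigma=Q$ and the standard shortest-relation argument, followed by a denominator bound. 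This is essentially Anderson's own argument, so you are re-deriving the cited input rather than taking a genuinely new route; what the re-derivation buys is that it visibly covers non-effective motives and general $A$, points the paper's one-line proof leaves implicit. One step should be tightened: the uniform denominator bound does not come from ``primes of $A$ determined by $\tau_M,\tau_N$''. The correct mechanism is that once $f_1,\dots,f_s\in\Hom_{\bar L}(\ulM_{\bar L},\ulN_{\bar L})$ are $Q$-linearly independent they are $K$-linearly independent in $V$ (by the fixed-point lemma), so writing $f=\sum_i c_if_i$ and solving by Cramer's rule in terms of local $A_{\bar L}$-bases of $M$ and $N$ produces a single $\delta\in A_{\bar L}\setminus\{0\}$, depending only on the $f_i$, with $\delta c_i\in A_{\bar L}$; since $c_i\in Q$ and $Q\cap\delta^{-1}A_{\bar L}$ is contained in a fractional ideal of $A$, this places $\Hom_{\bar L}$ inside a finitely generated $A$-submodule of $V^\Psi$. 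The $\tau$-equation is only what forces $f_K\in V^\Psi$; the integrality $f(M)\subseteq N$ is what bounds the denominators. Finally, your step (a) is superfluous: once $\Hom_R(\ulM,\ulN)$ embeds into the finite projective $A$-module $\Hom_{\bar L}(\ulM_{\bar L},\ulN_{\bar L})$, projectivity (hence torsion-freeness) is automatic from the Dedekind property.
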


\begin{proof}
If $R=k$ is a field and $\ulM$ and $\ulN$ are effective, the result is due to Anderson \cite[Corollary~1.7.2]{Anderson86}. For general $R$ we apply Proposition~\ref{PropBaseChangeInj} with $S=R/\Fm$ for $\Fm\subset R$ a maximal ideal, and use that over the Dedekind ring $A$ every submodule of a finite projective module is itself finite projective. 
\end{proof}

%%%%%%%%%%%%%%%%%%%%%%%%%%%%%%%%%%%%%%%%%%%%%%%%%%%%%%%%%%%%%%%%%%%%%%
%
%    Anderson modules
%
%%%%%%%%%%%%%%%%%%%%%%%%%%%%%%%%%%%%%%%%%%%%%%%%%%%%%%%%%%%%%%%%%%%%%%

\section{Abelian Anderson $A$-modules}\label{SectAndModules}
\setcounter{equation}{0}

We recall Definition~\ref{DefAndModule} of \emph{abelian Anderson $A$-modules} from the introduction. Let us give some explanations. All group schemes in this article are assumed to be commutative. 

\begin{definition}
Let $\CO$ be a commutative unitary ring. An \emph{$\CO$-module scheme} over $R$ is a commutative group scheme $E$ over $R$ together with a ring homomorphism $\CO\to\End_R(E)$.
\end{definition}

For a group scheme $E$ over $\Spec R$ we let $E^n:=E\times_R\ldots\times_R E$ be the $n$-fold fiber product over $R$. We denote by $e\colon\Spec R\to E$ its zero section and by $\Lie E:=\Hom_R(e^*\Omega^1_{E/R},R)$ the tangent space of $E$ along $e$. If $E$ is smooth over $\Spec R$, then $\Lie E$ is a locally free $R$-module of rank equal to the relative dimension of $E$ over $R$. In particular $\Lie E^n=(\Lie E)^{\oplus n}$. For a homomorphism $f\colon E\to E'$ of group schemes over $\Spec R$ we denote by $\Lie f\colon \Lie E\to \Lie E'$ the induced morphism of $R$-modules. Also we define the \emph{kernel of $f$} as the $R$-group scheme $\ker f:=E\underset{f,\,E',e'}{\times}\Spec R$ where $e'\colon\Spec R\to E'$ is the zero section. There is a canonical isomorphism
\begin{equation}\label{EqIsomKernel}
E\underset{f,\,E',f}{\times}E\;\isoto\; E\underset{R}{\times}\ker f
\end{equation}
given on $T$-valued points $P,Q\in E(T)$ for any $R$-scheme $T$ by $(P,Q)\mapsto(P,Q-P)$. If $P\in E(k)$ for a field $k$ and $P'=f(P)\in E'(k)$, pulling back \eqref{EqIsomKernel} under $P\colon\Spec k\to E$ yields an isomorphism of the fiber $\Spec k\underset{P',\,E',f}{\times}E$ of $f$ over $P'$ with $\Spec k\times_R\ker f$.

On $\BG_{a,R}=\Spec R[x]$ the elements $b\in R$, and in particular $\gamma(a)\in R$ for $a\in\BF_q$, act via $b^*\colon R[x]\to R[x],\,x\mapsto bx$. This makes $\BG_{a,R}$ into an $\BF_q$-module scheme. In addition let $\tau:=\Frob_{q,\BG_{a,R}}$ be the relative $q$-Frobenius endomorphism of $\BG_{a,R}=\Spec R[x]$ given by $x\mapsto x^q$. It satisfies $\Lie \tau=0$ and $\tau\circ b=b^q\circ\tau$. We let 
\begin{equation}\label{EqRTau}
R\{\tau\}\;:=\;\bigl\{\,\TS\sum\limits_{i=0}^nb_i\tau^i\colon n\in\BN_0, b_i\in R\,\bigr\}\quad\text{with}\quad\tau b\;=\;b^q\tau
\end{equation}
be the non-commutative polynomial ring in $\tau$ over $R$. For an element $f=\sum_ib_i\tau^i\in R\{\tau\}$ we set $f(x):=\sum_ib_i x^{q^i}$.

\begin{lemma}\label{LemmaEndGa}
There is an isomorphism of $R$-modules $R\{\tau\}^{d'\times d}\isoto\Hom_{R\text{\rm-groups},\BF_q\text{\rm-lin}}(\BG_{a,R}^d,\BG_{a,R}^{d'})$, which sends the matrix $F=(f_{ij})_{i,j}$ to the $\BF_q$-equivariant morphism $f\colon\BG_{a,R}^d\to\BG_{a,R}^{d'}$ of group schemes over $R$ with $f^*(y_i)=\sum_j f_{ij}(x_j)$ where $\BG_{a,R}^d=\Spec R[x_1,\ldots,x_d]$ and $\BG_{a,R}^{d'}=\Spec R[y_1,\ldots,y_{d'}]$. Under this isomorphism the map $f\mapsto \Lie f$ is given by the map $R\{\tau\}^{d'\times d}\to R^{d'\times d},\,F=\sum_n F_n\tau^n\mapsto F_0$.
\end{lemma}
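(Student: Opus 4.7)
The plan is to reduce to the case $d=d'=1$ via the decomposition $\Hom_{R\text{-groups},\BF_q\text{-lin}}(\BG_{a,R}^d,\BG_{a,R}^{d'})\cong\Hom_{R\text{-groups},\BF_q\text{-lin}}(\BG_{a,R},\BG_{a,R})^{d'\times d}$. The decomposition in the target $\BG_{a,R}^{d'}$ is just the obvious projection onto each factor. The decomposition in the source uses $\BF_q$-linearity: any $\BF_q$-linear homomorphism $f\colon\BG_{a,R}^d\to\BG_{a,R}$ satisfies $f(x_1,\ldots,x_d)=f(x_1,0,\ldots,0)+\cdots+f(0,\ldots,0,x_d)$ by additivity, so $f^*(y)=\sum_j g_j(x_j)$ for uniquely determined polynomials $g_j\in R[x_j]$ corresponding to $\BF_q$-linear endomorphisms of $\BG_{a,R}$. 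So it remains to identify $\Hom_{R\text{-groups},\BF_q\text{-lin}}(\BG_{a,R},\BG_{a,R})\cong R\{\tau\}$.

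A morphism $f\colon\BG_{a,R}\to\BG_{a,R}$ of $R$-schemes corresponds to an $R$-algebra map $R[y]\to R[x]$, $y\mapsto g(x)\in R[x]$. The group scheme condition is the identity $g(x_1+x_2)=g(x_1)+g(x_2)$ in $R[x_1,x_2]$. Writing $g(x)=\sum_n a_nx^n$ and comparing coefficients of $x_1^kx_2^{n-k}$ yields $a_n\binom{n}{k}=0$ in $R$ for all $0<k<n$. Since $R$ is an $\BF_p$-algebra and $\binom{n}{k}\in\BF_p\subset R$, this forces $a_n=0$ whenever some $\binom{n}{k}$ with $0<k<n$ is a nonzero element of $\BF_p$, i.e.\ whenever $n$ is not a power of $p$. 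Hence $g(x)=\sum_i b_ix^{p^i}$. The $\BF_q$-linearity condition $g(\lambda x)=\lambda g(x)$ for $\lambda\in\BF_q$ gives $b_i(\lambda^{p^i}-\lambda)=0$ for all $\lambda\in\BF_q$. Taking $\lambda$ to be a generator of $\BF_q^\times$, the quantity $\lambda^{p^i}-\lambda$ vanishes iff $s\mid i$, where $q=p^s$; otherwise $\lambda^{p^i}-\lambda$ is a nonzero element of $\BF_q\subset R$ and hence a unit in $R$, forcing $b_i=0$. Consequently $g(x)=\sum_n b_nx^{q^n}$, and the map $\sum_n b_n\tau^n\mapsto g$ is a bijection. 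It is clearly additive, and $R$-linear because the $R$-action on the Hom set by postcomposition with $b\in R$ corresponds to $g\mapsto b\cdot g$, matching left multiplication by $b$ on $R\{\tau\}$.

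For the Lie assertion, $\Lie\BG_{a,R}^d\cong R^d$ is the $R$-dual of $(e^*\Omega^1_{\BG_{a,R}^d/R})\cong\bigoplus_j R\,dx_j$. For $f\colon\BG_{a,R}\to\BG_{a,R}$ with $f^*(y)=\sum_n b_nx^{q^n}$ we compute $f^*(dy)=\sum_n b_nq^nx^{q^n-1}\,dx$, which restricted along the zero section $x=0$ equals $b_0\,dx$ since $x^{q^n-1}|_{x=0}=0$ for all $n\geq 1$. Dualizing gives $\Lie f=b_0$. Applied componentwise to a matrix $F=\sum_n F_n\tau^n\in R\{\tau\}^{d'\times d}$, this shows $\Lie f=F_0$.

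The only nonroutine step is the classification of $\BF_q$-linear additive polynomials over the general $A$-ring $R$ (not assumed to be a field): the key observation is that the constraint $b_i(\lambda^{p^i}-\lambda)=0$ takes place entirely inside $\BF_q\subset R$, so it can be resolved via the fact that nonzero elements of the subfield $\BF_q$ are units in $R$.
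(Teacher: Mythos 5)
Your proof is correct and follows essentially the same route the paper intends, which simply remarks that the lemma is straightforward from Lucas's theorem on binomial coefficients mod $p$: your assertion that some $\binom{n}{k}$ with $0<k<n$ is a nonzero element of $\BF_p$ exactly when $n$ is not a power of $p$ is precisely the consequence of Lucas's theorem that the paper invokes. The only cosmetic omission is the constant term $a_0$, which is killed by the $\lambda=0$ case of $\BF_q$-linearity (or by preservation of the zero section).
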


\begin{proof}
This is straight forward to prove using Lucas's theorem \cite{Lucas1878} on congruences of binomial coefficients which states that $\left(\begin{smallmatrix}pn+t\\pm+s\end{smallmatrix}\right)\equiv\left(\begin{smallmatrix}n\\m\end{smallmatrix}\right)\left(\begin{smallmatrix}t\\s\end{smallmatrix}\right)\mod p$ for all $n,m,t,s\in\BN_0$, and implies that $\left(\begin{smallmatrix}n\\i\end{smallmatrix}\right)\equiv 0\mod p$ for all $0<i<n$ if and only if $n=p^e$ for an $e\in\BN_0$.
\end{proof}

\begin{remark}\label{RemMotiveOfAModule}
The affine group scheme $E$ and its multiplication map $\Delta\colon E\times_R E\to E$ are described by its coordinate ring $B_E:=\Gamma(E,\CO_E)$ together with the comultiplication $\Delta^*\colon B_E\to B_E\otimes_R B_E$. If we write $\BG_{a,R}=\Spec R[\xi]$ the map
\begin{eqnarray*}
M(\ulE) & \isoto & \bigl\{\,x\in B_E\colon\Delta^*x=x\otimes 1+1\otimes x\text{ and }\phi_a^*x=\gamma(a)x\text{ for all }a\in\BF_q\,\bigr\}\\[1mm]
m\quad & \longmapsto & \quad m^*(\xi)
\end{eqnarray*}
is an isomorphism of $A_R$-modules. Choosing an element $\lambda\in\BF_q$ with $\BF_q=\BF_p(\lambda)$ we obtain an exact sequence of $R$-modules
\begin{equation}\label{EqM(E)}
\xymatrix @R=0pc {
0 \ar[r] & M(\ulE) \ar[r] & B_E \ar[r] & B_E\otimes_R B_E \es\oplus\es B_E\\
& m\ar@{|->}[r] & m^*(\xi)\,,\quad x \ar@{|->}[r] & \bigl(\Delta^*x-x\otimes 1-1\otimes x\,,\es\phi_\lambda^*x-\gamma(\lambda)x\bigr)
}
\end{equation}
This shows that for every flat $R$-algebra $R'$ we have $M(\ulE)\otimes_RR'=M(\ulE\times_R\Spec R')$, because $\Gamma(E\times_RR',\CO_{E\times R'})=B_E\otimes_R R'$. In particular, if $R'$ satisfies condition \ref{DefAndModule_A} of Definition~\ref{DefAndModule} then $M(\ulE)\otimes_RR'\cong R'\{\tau\}^{1\times d}$ by Lemma~\ref{LemmaEndGa}.

From this we see that for any $R$-algebra $S$ the tensor product of the sequence \eqref{EqM(E)} with $S$ stays exact and $M(\ulE)\otimes_RS=M(\ulE\times_{\Spec R}\Spec S)$. Namely, we choose a faithfully flat morphism $R\to R'$ as in Definition~\ref{DefAndModule}\ref{DefAndModule_A} and tensor \eqref{EqM(E)} with $S\otimes_RR'$. This tensor product stays exact by Lemma~\ref{LemmaEndGa} because $M(\ulE)\otimes_RR'\cong R'\{\tau\}^{1\times d}$. Since $S\to S\otimes_RR'$ is faithfully flat, already the tensor product of \eqref{EqM(E)} with $S$ was exact.
\end{remark}

\begin{definition}\label{DefMotiveOfAModule}
If $\ulE$ is an abelian Anderson $A$-module we consider in addition on $M(\ulE)$ the map $\tau\colon m\mapsto\Frob_{q,\BG_{a,R}}\!\circ\, m$. Since $\tau(bm)=b^q\tau(m)$ the map $\tau$ is $\sigma$-semilinear and induces an $A_R$-linear map $\tau_M\colon\sigma^*M\to M$. We set $\ulM(\ulE):=\bigl(M(\ulE),\tau_M)$ and call it the \emph{(effective) $A$-motive associated with $\ulE$}.
\end{definition}

This definition is justified by the following relative version of Anderson's theorem~\cite[Theorem~1]{Anderson86}.

\begin{theorem}\label{ThmMotiveOfAModule}
If $\ulE=(E,\phi)$ is an abelian Anderson $A$-module of rank $r$ and dimension $d$ then $\ulM(\ulE)=(M,\tau_M)$ is an effective $A$-motive of rank $r$ and dimension $d$. There is a canonical isomorphism of $R$-modules
\begin{equation}\label{EqT0}
\coker\tau_M\;\isoto\;\Hom_R(\Lie E,R),\quad m\mod\tau_M(\sigma^*M)\;\longmapsto\; \Lie m\,.
\end{equation}
The contravariant functor $\ulE\mapsto\ulM(\ulE)$ is fully faithful. Its essential image consists of all effective $A$-motives $\ulM=(M,\tau_M)$ over $R$ of some dimension $d$, for which there exists a faithfully flat ring homomorphism $R\to R'$ such that $M\otimes_RR'$ is a finite free left $R'\{\tau\}$-module under the map $\tau\colon M\to M,\,m\mapsto\tau_M(\sigma_M^*m)$.
\end{theorem}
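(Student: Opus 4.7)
The plan is to reduce everything fpqc-locally to the split case $E \cong \BG_{a,R'}^d$ and invoke Lemma~\ref{LemmaEndGa} as a dictionary between $\BF_q$-equivariant morphisms of group schemes and matrices over $R'\{\tau\}$.

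First I would verify that $\ulM(\ulE)$ is an effective $A$-motive of rank $r$ and dimension $d$, together with the canonical isomorphism \eqref{EqT0}. That $M$ is locally free of rank $r$ over $A_R$ is part of Definition~\ref{DefAndModule}\ref{DefAndModule_C}, and $\tau_M$ is $A_R$-linear by construction. To control $\coker\tau_M$ I would pass to a faithfully flat $R \to R'$ as in~\ref{DefAndModule_A}, so that Lemma~\ref{LemmaEndGa} identifies $M \otimes_R R' \cong R'\{\tau\}^{1\times d}$ with $\tau$ acting by left multiplication; then $\tau_M \otimes R'$ becomes $F \mapsto \tau F$, with cokernel $(R')^{1\times d}$, which by the second clause of Lemma~\ref{LemmaEndGa} is canonically $\Hom_{R'}(\Lie E_{R'}, R')$ via $m \mapsto \Lie m$. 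Since the recipe $m \mapsto \Lie m$ is canonical, it descends to the claimed isomorphism~\eqref{EqT0} over $R$, and $\coker\tau_M$ is locally free of rank $d$. The $\CJ^d$-annihilation of $\coker\tau_M$ then follows by dualizing condition~\ref{DefAndModule_B}: on $\Hom_R(\Lie E, R)$ the element $a \in A$ acts as the transpose of $\Lie\phi_a$, so $(a\otimes 1 - 1\otimes\gamma(a))^d$ acts as $(\Lie\phi_a - \gamma(a))^d = 0$.

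For fully faithfulness, the assignment $f \mapsto \ulM(f)$ is visibly contravariant and $A$-linear. Injectivity follows from Remark~\ref{RemMotiveOfAModule}: fpqc-locally the coordinate functions of $E \cong \BG_{a,R'}^d$ sit inside $M$, so if $m \circ f = 0$ for every $m \in M$ then $f = 0$. For surjectivity, given an $A_R$-linear $g\colon \ulM(\ulE') \to \ulM(\ulE)$ commuting with $\tau$, I would base-change to $R'$ where both $E$ and $E'$ split; then $g \otimes R'$ becomes $R'\{\tau\}$-linear and corresponds via Lemma~\ref{LemmaEndGa} to an $\BF_q$-equivariant morphism $\wt f\colon E_{R'} \to E'_{R'}$ of group schemes which automatically intertwines the $A$-actions. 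The uniqueness in Lemma~\ref{LemmaEndGa} makes $\wt f$ independent of the chosen trivializations, so fpqc descent yields $f\colon E \to E'$ over $R$ with $\ulM(f) = g$.

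The forward direction of the essential-image claim is Step~1 combined with Remark~\ref{RemMotiveOfAModule}. Conversely, given an effective $A$-motive $\ulM$ of dimension $d$ with $M \otimes_R R'$ finite free of rank $d$ over $R'\{\tau\}$, a choice of basis $m_1,\dots,m_d$ identifies $M\otimes R' \cong R'\{\tau\}^{1\times d}$, and Lemma~\ref{LemmaEndGa} produces both a group-scheme structure on $E_{R'} := \BG_{a,R'}^d$ and an $A$-action coming from $\phi_a^*m := a\cdot m$ on $M$. Pulled back to $R' \otimes_R R'$, any two such bases differ by a matrix in $\GL_d(R'\{\tau\})$ commuting with the $A$-action, giving an $\BF_q$-equivariant isomorphism of the two pullbacks; fpqc descent for affine schemes then glues $E_{R'}$ to an affine group scheme $E$ over $R$ carrying an $A$-action. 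Conditions~\ref{DefAndModule_A} and~\ref{DefAndModule_C} hold by construction, while~\ref{DefAndModule_B} is obtained by dualizing $\CJ^d \cdot \coker\tau_M = 0$ through~\eqref{EqT0}. I expect the main obstacle to be precisely this final descent: one must verify that the group-scheme structure, the $A$-action, and the identification $\ulM(\ulE) \cong \ulM$ all descend compatibly and independently of the auxiliary choices of $R'$ and basis — each step being a careful but routine fpqc cocycle check, in the spirit of the injectivity argument in Proposition~\ref{PropBaseChangeInj}.
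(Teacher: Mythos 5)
Your proposal is correct and follows essentially the same route as the paper: faithfully flat reduction to the split case via Definition~\ref{DefAndModule}\ref{DefAndModule_A}, the dictionary of Lemma~\ref{LemmaEndGa} identifying $M\otimes_RR'$ with $R'\{\tau\}^{1\times d}$ so that $\tau$-equivariant maps become right multiplication by matrices over $R'\{\tau\}$, and effective fpqc descent of morphisms and of affine schemes to come back down to $R$. The only points treated more explicitly in the paper are the verification that the matrix $H$ intertwines the $A$-actions and the citation of descent of affineness and smoothness for the constructed $E$, both of which you correctly flag as routine.
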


\begin{proof}
We first establish the isomorphism \eqref{EqT0}. If $m=\tau_M(\sum_i m_i\otimes b_i)=\sum_i b_i\circ\Frob_{q,\BG_{a,R}}\!\circ\, m_i$ with $m_i\in M$ and $b_i\in R$, then $\Lie m=0$ because $\Lie \Frob_{q,\BG_{a,R}}=0$. So the map \eqref{EqT0} is well defined. To prove that it is an isomorphism one can apply a faithfully flat base change $R\to R'$, see \cite[\S\,0$_{\text{I}}$.6.6]{EGA}, such that $E\otimes_RR'\cong\BG_{a,R'}^d$ and $\Lie E\otimes_RR'\cong(R')^{\oplus d}$. Then $M\otimes_RR'\cong R'\{\tau\}^{1\times d}$ by Remark~\ref{RemMotiveOfAModule}, and the inverse map is given by the natural inclusion $(R')^{1\times d}\subset R'\{\tau\}^{1\times d},F_0\mapsto F_0\tau^0$.

As a consequence, $\coker\tau_M$ is a locally free $R$-module of rank equal to $d=\dim\ulE$ and annihilated by $\CJ^d$ because of condition \ref{DefAndModule_B} in Definition~\ref{DefAndModule}. This implies $\coker\tau_M|_{\Spec A_R\setminus\Var(\CJ)}=(0)$, and therefore the morphism $\tau_M\colon \sigma^*M|_{\Spec A_R\setminus\Var(\CJ)}\to M|_{\Spec A_R\setminus\Var(\CJ)}$ is surjective. By \cite[Corollary~8.12]{GoertzWedhorn} it is an isomorphism, because $M$ and $\sigma^*M$ are finite locally free over $A_R$ of the same rank. Thus $\ulM(\ulE)$ is an $A$-motive and even an effective $A$-motive of dimension $d$ by Proposition~\ref{PropAMotiveEffective}.
%To prove that $\tau_M$ is injective choose an element $t\in A\setminus\BF_q$ and consider the finite flat inclusions $\BF_q[t]\subset A$ and $R[t]\subset A_R$. Set $\theta:=\gamma(t)\in R$. Since $t-\theta\in\CJ$ and $\coker\tau_M$ is annihilated by $\CJ^d$ the map $\tau_M$ induces an epimorphism $\tau_M\colon\sigma^*M[\tfrac{1}{t-\theta}]\onto M[\tfrac{1}{t-\theta}]$ of finite locally free modules of the same rank over $A_R[\tfrac{1}{t-\theta}]$, which therefore is an isomorphism. Since $t-\theta$ is a non-zero divisor on $R[t]$ the localization map $R[t]\to R[t][\tfrac{1}{t-\theta}]$ is injective. Likewise $\sigma^*M\to \sigma^*M[\tfrac{1}{t-\theta}]$ is injective because $\sigma^*M$ is flat over $A_R$. This implies that $\tau_M\colon\sigma^*M\to M$ is injective and that $\ulM(\ulE)$ is an effective $A$-motive of dimension $d$.

\medskip
Let $\ulE=(E,\phi)$ and $\ulE'=(E',\phi')$ be two abelian Anderson $A$-modules over $R$ and let $\ulM=\ulM(\ulE)$ and $\ulM'=\ulM(\ulE')$ be the associated effective $A$-motives. To prove that the map 
\begin{equation}\label{EqHoms}
\Hom_R(\ulE,\ulE')\;\longto\;\Hom_R\bigl(\ulM,\ulM'\bigr),\quad f\;\longmapsto(m'\mapsto m'\circ f)
\end{equation}
is bijective, we again apply a faithfully flat base change $R\to R'$, such that $E\otimes_RR'\cong\BG_{a,R'}^d$ and $E'\otimes_RR'\cong\BG_{a,R'}^{d'}$. Then $\Hom_{R'}(\ulE\otimes_RR',\ulE'\otimes_RR')\cong\bigl\{F\in R'\{\tau\}^{d'\times d}\colon\phi'_a\circ F=F\circ\phi_a\;\forall\,a\in A\bigr\}$ by Lemma~\ref{LemmaEndGa}. Also $\ulM(\ulE)\otimes_RR'\cong R'\{\tau\}^{1\times d}$ and $\ulM(\ulE')\otimes_RR'\cong R'\{\tau\}^{1\times d'}$. The condition $h\circ\tau_{M'}=\tau_M\circ\sigma^*h$ on an element $h\in\Hom_{R'}\bigl(\ulM(\ulE')\otimes_RR',\ulM(\ulE)\otimes_RR'\bigr)$ implies that $h\colon R'\{\tau\}^{1\times d'}\to R'\{\tau\}^{1\times d}$ is a homomorphism of left $R'\{\tau\}$-modules, hence given by multiplication on the right by a matrix $H\in R'\{\tau\}^{d'\times d}$. Then $m'\circ\phi'_a\circ H=h\bigl((a\otimes1)\cdot m')=(a\otimes1)\cdot h(m')=m'\circ H\circ\phi_a$ implies $\phi'_a\circ H=H\circ\phi_a$ for all $a\in A$. It follows that the map \eqref{EqHoms} is bijective over $R'$. So every $h\in\Hom_R\bigl(\ulM(\ulE'),\ulM(\ulE)\bigr)$ gives rise to a morphism $f'\in\Hom_{R'}(\ulE\otimes_RR',\ulE'\otimes_RR')$ which carries a descent datum because $h$ was defined over $R$. Since by \cite[\S\,6.1, Theorem~6(a)]{BLR} the descent of morphisms relative to the faithfully flat morphism $R\to R'$ is effective, $f'$ descends to the desired $f\in\Hom_R(\ulE,\ulE')$. This shows that the functor $\ulE\mapsto\ulM(\ulE)$ is fully faithful.

\medskip
Let $\ulM=(M,\tau_M)$ be an effective $A$-motive of dimension $d$ over $R$ for which there exists a faithfully flat ring homomorphism $R\to R'$ such that $M\otimes_RR'\cong R'\{\tau\}^{1\times d}$. Observe that $\coker(\tau_M\otimes\id_{R'})\cong (R'\{\tau\}/R'\{\tau\}\tau)^{1\times d}=(R')^{1\times d}$. For all $a\in A$ we have $\tau\cdot(a\otimes1)m=\sigma(a\otimes1)\cdot\tau(m)=(a\otimes1)\tau m$. Therefore the map $m\mapsto(a\otimes1)m$ is a homomorphism of left $R'\{\tau\}$-modules, and hence given by $(a\otimes1)m=m\cdot\phi'_a$ for a matrix $\phi'_a\in R'\{\tau\}^{d\times d}$. Then $\ulE':=(E'=\BG_{a,R'}^d,\phi'\colon A\to R'\{\tau\}^{d\times d},a\mapsto\phi'_a)$ satisfies $\ulM(\ulE')=\ulM\otimes_RR'$. Again $\bigl(a\otimes1-1\otimes\gamma(a)\bigr)^d=0$ on $\coker\tau_M$ implies $\bigl(\Lie \phi'_a-\gamma(a)\bigr)^d=0$ on $\Lie E'$. So $\ulE'$ is an abelian Anderson $A$-module over $R'$ with $\ulM(E')\cong\ulM\otimes_RR'$. Consider the ring $R'':=R'\otimes_RR'$ and the two maps $p_1,p_2\colon R'\to R''$ given by $p_1(b')=b'\otimes1$ and $p_2(b')=1\otimes b'$. The canonical isomorphism $p_1^*(\ulM\otimes_RR')=p_2^*(\ulM\otimes_RR')$ induces an isomorphism $p_1^*\ulE'\cong p_2^*\ulE'$ which is a descend datum on $\ulE'$ relative to $R\to R'$. Since faithfully flat descend on affine schemes is effective by \cite[\S\,6.1, Theorem~6(b)]{BLR} there exists a group scheme $E$ over $R$ with a ring homomorphism $\phi\colon A\to\End_{R\text{-groups}}(E)$ such that $(E,\phi)\otimes_RR'\cong\ulE'$. By \cite[IV$_2$, Proposition~2.7.1 and IV$_4$, Corollaire 17.7.3]{EGA} the group scheme $E$ is affine and smooth over $R$ and hence $(E,\phi)$ is an abelian Anderson $A$-module with $\ulM(E,\phi)\cong\ulM$.
\end{proof}

The theorem implies the following

\begin{corollary}\label{CorAModuleHomFinGen}
The assertions of Proposition~\ref{PropBaseChangeInj} and Corollary~\ref{CorHomFinGen} also hold for abelian Anderson $A$-modules. \qed
\end{corollary}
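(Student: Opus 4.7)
The plan is to reduce both assertions to the corresponding statements for $A$-motives via the fully faithful contravariant functor $\ulE\mapsto\ulM(\ulE)$ of Theorem~\ref{ThmMotiveOfAModule}. Let $\ulE$ and $\ulE'$ be abelian Anderson $A$-modules over $R$ and set $\ulM:=\ulM(\ulE)$, $\ulM':=\ulM(\ulE')$. By Theorem~\ref{ThmMotiveOfAModule} we have a canonical isomorphism of $A$-modules
\[
\Hom_R(\ulE,\ulE')\;\isoto\;\Hom_R(\ulM',\ulM)\,,\qquad f\longmapsto\ulM(f)\,.
\]
The key compatibility I need is that this identification respects base change along any $R$-algebra $S$: by Remark~\ref{RemMotiveOfAModule} the exactness of the sequence~\eqref{EqM(E)} is preserved under $-\otimes_RS$, so $\ulM(\ulE\times_R\Spec S)\cong\ulM(\ulE)\otimes_RS$ canonically (and likewise for $\ulE'$). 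Thus under the functor, the base-changed morphism $f\otimes\id_S\in\Hom_S(\ulE\otimes_RS,\ulE'\otimes_RS)$ corresponds to $\ulM(f)\otimes\id_S\in\Hom_S(\ulM'\otimes_RS,\ulM\otimes_RS)$.

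First, I would deduce the analog of Proposition~\ref{PropBaseChangeInj}. For $f\in\Hom_R(\ulE,\ulE')$, the set $X\subset\Spec R$ of points $s$ where $f\otimes\id_{\kappa(s)}=0$ equals the set of points where $\ulM(f)\otimes\id_{\kappa(s)}=0$, by the base-change compatibility applied with $S=\kappa(s)$ and using that $\ulE\mapsto\ulM(\ulE)$ is faithful (so that $f\otimes\id_{\kappa(s)}=0$ iff $\ulM(f\otimes\id_{\kappa(s)})=0$). Hence Proposition~\ref{PropBaseChangeInj} applied to $\ulM(f)$ shows that $X$ is open and closed and that $f\otimes\id_{\wt R}=0$ on $\Spec\wt R:=X$. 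If $\Spec R$ is connected and $S\ne(0)$ is an $R$-algebra, and if $f\otimes\id_S=0$, then $\ulM(f)\otimes\id_S=0$, so by Proposition~\ref{PropBaseChangeInj} we obtain $\ulM(f)=0$, whence $f=0$ by the full faithfulness.

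Second, for the analog of Corollary~\ref{CorHomFinGen}, assume $\Spec R$ is connected. Then
\[
\Hom_R(\ulE,\ulE')\;\cong\;\Hom_R(\ulM',\ulM)
\]
is a finite projective $A$-module of rank at most $(\rk\ulM')(\rk\ulM)=(\rk\ulE')(\rk\ulE)$ by Corollary~\ref{CorHomFinGen}, where the equality of ranks uses the fact recorded in Theorem~\ref{ThmMotiveOfAModule} that $\rk\ulM(\ulE)=\rk\ulE$. There is no real obstacle here beyond verifying the base-change compatibility, since once one knows that $\ulM(-)$ commutes with arbitrary base change of $R$-algebras (which is exactly what Remark~\ref{RemMotiveOfAModule} provides), both results transport verbatim from the $A$-motive side.
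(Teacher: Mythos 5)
Your proposal is correct and is exactly the argument the paper intends: the corollary is stated with \qed because it follows from Theorem~\ref{ThmMotiveOfAModule} (full faithfulness of $\ulE\mapsto\ulM(\ulE)$ and $\rk\ulM(\ulE)=\rk\ulE$) together with the base-change compatibility $\ulM(\ulE\times_R\Spec S)\cong\ulM(\ulE)\otimes_RS$ from Remark~\ref{RemMotiveOfAModule}, which is precisely the reduction you carry out. Your verification that the locus $X$ and the vanishing of $f$ transfer through the faithful functor fiberwise is the only point needing care, and you handle it correctly.
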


\medskip

An important class of examples are Drinfeld modules. We recall their definition from \cite[\S\,5]{Drinfeld} and \cite[\S\,1]{Saidi}.

\begin{definition}\label{DefDrinfeldMod}
A \emph{Drinfeld $A$-module of rank $r\in\BN_{>0}$ over $R$} is a pair $\ulE=(E,\phi)$ consisting of a smooth affine group scheme $E$ over $\Spec R$ of relative dimension $1$ and a ring homomorphism $\phi\colon A\to\End_{R\text{-groups}}(E),\,a\mapsto\phi_a$ satisfying the following conditions:
\begin{enumerate}
\item \label{DefDrinfeldMod_A}
Zariski-locally on $\Spec R$ there is an isomorphism $\alpha\colon E\isoto\BG_{a,R}$ of $\BF_q$-module schemes such that
\item \label{DefDrinfeldMod_B}
the coefficients of $\Phi_a:=\alpha\circ\phi_a\circ\alpha^{-1}=\sum\limits_{i\ge0}b_i(a)\tau^i\in\End_{R\text{-groups},\BF_q\text{\rm-lin}}(\BG_{a,R})=R\{\tau\}$ satisfy $b_0(a)=\gamma(a)$, $b_{r(a)}(a)\in R\mal$ and $b_i(a)$ is nilpotent for all $i>r(a):=-r\,[\BF_\infty:\BF_q]\ord_\infty(a)$.
\end{enumerate}
If $b_i(a)=0$ for all $i>r(a)$ we say that $\ulE$ is \emph{in standard form}.
\end{definition}

It is well known that every Drinfeld $A$-module over $R$ can be put in standard form; see \cite[\S\,5]{Drinfeld} or \cite[\S\,4.2]{Matzat}. This is a consequence of the following lemma of Drinfeld~\cite[Propositions~5.1 and 5.2]{Drinfeld} which we will need again below. For the convenience of the reader we recall the proof.

\begin{lemma}\label{LemmaStandardForm}
\begin{enumerate}
\item \label{LemmaStandardForm_A}
Let $b=\sum_{i=0}^n b_i\tau^i\in R\{\tau\}$ and let $r$ be a positive integer such that $b_r\in R\mal$ and $b_i$ is nilpotent for all $i>r$. Then there is a unique unit $c=\sum_{i\ge0}c_i\tau^i\in R\{\tau\}\mal$ with $c_0=1$ and $c_i$ nilpotent for $i>0$, such that $c^{-1}bc=\sum_{i=0}^r b'_i\tau^i$ with $b'_r\in R\mal$.
\item \label{LemmaStandardForm_B}
Let $\Spec R$ be connected and let $b=\sum_{i=0}^m b_i\tau^i$ and $c=\sum_{i=0}^n c_i\tau^i\in R\{\tau\}$ with $m,n>0$ and $b_m,c_n\in R\mal$. Let $d\in R\{\tau\}\setminus\{0\}$ satisfy $db=cd$. Then $m=n$ and $d=\sum_{i=0}^r d_i\tau^i$ with $d_r\in R\mal$.
\end{enumerate}
\end{lemma}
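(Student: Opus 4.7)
My plan is to treat (a) and (b) separately, with (a) an inductive ``kill the higher terms'' argument and (b) a reduction to residue fields combined with an idempotent trick that is where connectedness enters.

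For part (a), let $I := (b_{r+1},\ldots,b_n) \subseteq R$; this is finitely generated by nilpotents and therefore itself nilpotent, say $I^N = 0$. I would induct on $N$. The case $N=1$ is trivial ($I=0$, so $b$ already has degree $\leq r$ and $c=1$). For $N \geq 2$, first reduce $b$ modulo $I^{N-1}$ and apply the inductive hypothesis to obtain $\bar c \in (R/I^{N-1})\{\tau\}$; lift its positive-degree coefficients from $I/I^{N-1}$ to $I$, producing $c_1 \in 1 + I\cdot R\{\tau\}$ for which $b' := c_1^{-1} b c_1$ has its degree-$>r$ coefficients in $I^{N-1}$. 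The crucial simplification is now that $(I^{N-1})^2 \subseteq I^N = 0$, so any $e \in I^{N-1}$ satisfies $e^q = 0$ (since $q \geq 2$) and hence $\tau e = e^q \tau = 0$ in $R\{\tau\}$. Therefore, taking $c_2 = 1 + \sum_{j>0} e_j\tau^j$ with $e_j \in I^{N-1}$, the conjugation $c_2^{-1} b' c_2$ becomes \emph{linear} in the $e_j$ (all ``cross terms'' like $e_j\tau^j e_k\tau^k$ vanish). Killing the coefficients in degrees $>r$ then reduces to a finite triangular linear system over $R$ with unit diagonal entries $(b_r)^{q^{\bullet}}$, uniquely solvable by descending degree induction, and I set $c := c_1 c_2$. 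Uniqueness: any two solutions differ by $u = 1 + (\text{positive-degree terms with coefficients in some nilpotent ideal } J)$ satisfying $b^\ast u = u b^{\ast\ast}$ between the two normal forms; the identical top-degree $q$-power argument, starting from $b^\ast_r u_s^{q^r} = u_s (b^{\ast\ast}_r)^{q^s}$ and descending, forces every $u_j \in J^2$, so $J = J^2$, hence $J = 0$ by nilpotence and $u = 1$.

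For part (b) I first establish $m = n$, then that $d_r \in R^\times$. For any prime $\Fp \subset R$ with $\bar d \neq 0$ in $\kappa(\Fp)\{\tau\}$ (a left-and-right Ore domain, hence without zero divisors), top-degree comparison in $\bar d \bar b = \bar c \bar d$ immediately gives $m = n$. If no such $\Fp$ exists, every coefficient $d_i$ lies in the nilradical, so $I := (d_0,\ldots,d_r)$ is nilpotent. A mod-$I^2$ analysis parallel to part (a) --- using $d_i^{q^\ell} \in I^{q^\ell} \subseteq I^2$ for $\ell \geq 1$ to collapse $cd \equiv c_0 \cdot d \mod I^2$ --- matches coefficients of $db$ in descending degrees $k = r+m, r+m-1, \ldots$, yielding a triangular system with unit diagonal $(b_m)^{q^\bullet}$ that forces successively $d_r, d_{r-1}, \ldots, d_0 \in I^2$. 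Hence $I = I^2 = 0$ by nilpotence, contradicting $d \neq 0$. So some $\Fp$ has $\bar d \neq 0$ and $m = n$.

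With $m = n$ in hand, the coefficient of $\tau^{r+m}$ in $db = cd$ reads $d_r b_m^{q^r} = c_m d_r^{q^m}$, equivalently $d_r = A d_r^{q^m}$ where $A := c_m b_m^{-q^r} \in R^\times$. Setting $e := A d_r^{q^m - 1}$, the identity $d_r = A d_r^{q^m}$ rewrites as $d_r e = d_r$, and multiplying this by $A d_r^{q^m - 2}$ produces $e^2 = e$. Since $\Spec R$ is connected, the only idempotents of $R$ are $0$ and $1$. If $e = 0$, then $A d_r^{q^m-1} = 0$, so $d_r^{q^m - 1} = 0$ (as $A$ is a unit), giving $d_r = A d_r \cdot d_r^{q^m - 1} = 0$, contradicting $d_r \neq 0$; therefore $e = 1$, i.e., $A d_r^{q^m - 1} = 1$, so $d_r$ is a unit with inverse $A d_r^{q^m - 2}$.

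I expect the most delicate piece to be the inductive existence step in part (a): verifying that the square-zero property of $I^{N-1}$ genuinely linearizes the conjugation and that the resulting triangular system has the unit diagonals I claim. The slickest piece is the idempotent construction in the second half of (b), which exploits precisely the connectedness hypothesis at one clean point.
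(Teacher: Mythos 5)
Your argument is correct, but it is organized quite differently from the paper's. For part (a) the paper gives no proof at all --- it simply cites Drinfeld's Propositions 5.1--5.2 (and the re-proofs in Laumon and Matzat) --- so your induction on the nilpotency exponent of $I=(b_{r+1},\dots,b_n)$, with the square-zero linearization $\tau e=e^q\tau=0$ and the triangular system with diagonal entries $b_r'^{\,q^\bullet}$, supplies a self-contained argument the paper omits; the uniqueness step via $J=J^2$ for the nilpotent ideal of coefficients of $u=c^{-1}\tilde c$ is also sound. For part (b) the paper argues directly over $R$ by a three-way case split $m>n$, $m<n$, $m=n$: the first two cases are killed by showing the ideal of coefficients of $d$ satisfies $I=I^q$ and invoking Nakayama, and the case $m=n$ uses Nakayama again to produce $f\in 1+(d_r)$ with $f d_r=0$ and then the clopen-set form of connectedness. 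You instead get $m=n$ by passing to residue fields (degrees add in $\kappa(\Fp)\{\tau\}$), disposing of the degenerate case where all $d_i$ are nilpotent by the $I=I^2$ argument, and then prove $d_r\in R\mal$ by exhibiting the explicit idempotent $e=c_mb_m^{-q^r}d_r^{q^m-1}$ and using that a connected $\Spec R$ has only trivial idempotents --- which is exactly the formulation of connectedness the paper itself isolates later in Lemma~\ref{LemmaTauInvInR}. Your route is arguably cleaner at the final step (no Nakayama, an explicit inverse $c_mb_m^{-q^r}d_r^{q^m-2}$ for $d_r$), while the paper's coefficient-by-coefficient treatment over $R$ avoids any appeal to the absence of zero divisors in $\kappa(\Fp)\{\tau\}$; both proofs invoke connectedness at the same single point.
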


\begin{proof}
\ref{LemmaStandardForm_A} was also reproved in \cite[Lemma~1.1.2]{Laumon} and \cite[Proposition~1.4]{Matzat}.

\smallskip\noindent
\ref{LemmaStandardForm_B} We write $d=\sum_{i=0}^r d_i\tau^i$ with $d_r\ne0$.
% We note that the set of prime ideals $\Fp\subset R$ with $d_r\in\Fp$ is closed in $\Spec R$. We also show that this set is open. We consider the finitely generated subring $\wt R=\BF_q(b_i,c_i,d_i)\subset R$ which is noetherian. 
The equation $db=cd$ implies $\sum_j(d_{i-j}b_j^{\,q^{i-j}}-c_j d_{i-j}^{\;q^j})=0$ for all $i$, where the sum runs over $j=\max\{0,i-r\},\ldots,\min\{i,\max\{m,n\}\}$. We now distinguish three cases.

If $m>n$ then $i=m+r$ yields $d_rb_m^{\,q^r}=0$, whence $d_r=0$ which is a contradiction. 

If $m<n$ then $i=n+r$ yields $c_nd_r^{\,q^n}=0$, whence $d_r\in\Fp$ for every prime ideal $\Fp\subset R$. For $n+r>i\ge n$ we obtain $c_nd_{i-n}^{\,q^n}=\sum\limits_{0\le j<n}(d_{i-j}b_j^{\,q^{i-j}}-c_j d_{i-j}^{\;q^j})$ and by descending induction on $i$ it follows that $d_{i-n}\in\Fp$ for every prime ideal $\Fp\subset R$ for all $i-n=r,\ldots,0$. So the ideal $I:=(d_i\colon 0\le i\le r)\subset R$ is contained in every prime ideal $\Fp\subset R$. Now $i=m+r$ yields $d_rb_m^{\,q^r}=\sum\limits_{j=m}^{m+r}c_j d_{m+r-j}^{\;q^j}$, whence $d_r\in I^q$. For $m+r>i\ge m$ we obtain $d_{i-m}b_m^{\,q^{i-m}}=\sum\limits_{0\le j<m} d_{i-j}b_j^{\,q^{i-j}}-\sum\limits_{0\le j\le n} c_j d_{i-j}^{\;q^j}$ and by descending induction on $i$ it follows that $d_{i-m}\in I^q$ for all $i-m=r,\ldots,0$. Therefore the finitely generated ideal $I$ satisfies $I=I^q$ and by Nakayama's lemma \cite[Corollary~4.7]{Eisenbud} there is an element $f\in 1+I$ such that $f\!\cdot\!I=(0)$. Since $I\subset\Fp$ for all prime ideals $\Fp\subset R$, the element $1-f$ is a unit in $R$ and $I=0$. Therefore $d_i=0$ for all $i$ which is a contradiction.

If $m=n$ then $c_m d_r^{q^m}=d_r b_m^{q^r}$ and we consider the ideal $I=(d_r)\subset R$. Again $I=I^{q^m}$ and by \cite[Corollary~4.7]{Eisenbud} there is an element $f\in 1+I$ such that $f\!\cdot\! d_r=0$. Now assume that $d_r\in\Fp$ for some prime ideal $\Fp\subset R$. Then $f\notin\Fp$, whence $\Fp\in\Spec R[\tfrac{1}{f}]\subset\Spec R$ and $d_r=0$ on the open neighborhood $\Spec R[\tfrac{1}{f}]$ of $\Fp$. Since the set of prime ideals $\Fp\subset R$ with $d_r\in\Fp$ is closed in $\Spec R$ and the latter is connected, it follows that $d_r=0$ on all of $\Spec R$. This is a contradiction and so our assumption was false. In particular $d_r$ is not contained in any prime ideal and so $d_r\in R\mal$ as desired.
\end{proof}

\begin{theorem}\label{ThmDrinfeldMod}
The abelian Anderson $A$-modules of dimension $1$ and rank $r$ over $R$ are precisely the Drinfeld $A$-modules of rank $r$ over $R$.
\end{theorem}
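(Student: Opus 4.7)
The proof verifies both implications of the equivalence. Throughout, fix $a\in A\setminus\BF_q$ and set $n(a):=-[\BF_\infty{:}\BF_q]\ord_\infty(a)$, so by Lemma~\ref{LemmaF_q[t]} $A$ is free of rank $n(a)$ over $\BF_q[t]$ via $t\mapsto a$, and $r(a)=r\,n(a)$.

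For the direction "Drinfeld $\Rightarrow$ abelian Anderson of dimension $1$", let $(E,\phi)$ be a Drinfeld $A$-module of rank $r$. Condition~\ref{DefAndModule_A} of Definition~\ref{DefAndModule} is immediate from Definition~\ref{DefDrinfeldMod}~\ref{DefDrinfeldMod_A} (Zariski-local implies fppf-local). Condition~\ref{DefAndModule_B} with $d=1$ reduces to $\Lie\phi_a=\gamma(a)$, which is $b_0(a)=\gamma(a)$ via Lemma~\ref{LemmaEndGa}. For condition~\ref{DefAndModule_C}, I would Zariski-localize and invoke Lemma~\ref{LemmaStandardForm}~\ref{LemmaStandardForm_A} to put $\Phi_a$ in standard form. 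Then by Remark~\ref{RemMotiveOfAModule} we have $M(\ulE)\cong R\{\tau\}$ with the $A_R$-structure where $R$ acts by left multiplication and $A\ni a$ by right multiplication by $\Phi_a$. Since $\Phi_a$ is a $\tau$-polynomial of degree $r(a)$ with unit leading coefficient, right Euclidean division gives $R\{\tau\}=\bigoplus_{i=0}^{r(a)-1}\tau^i\cdot R[\Phi_a]$, so $M$ is free of rank $r(a)$ over $R[t]$ (with $t\mapsto\Phi_a$). Combined with Lemma~\ref{LemmaF_q[t]} (which gives that $A_R$ is free of rank $n(a)$ over $R[t]$), this shows $M$ is locally free of rank $r$ over $A_R$.

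For the converse, let $(E,\phi)$ be abelian Anderson of dimension $1$ and rank $r$. By Theorem~\ref{ThmMotiveOfAModule}, $\coker\tau_M\cong\Hom_R(\Lie E,R)$ is locally free of rank $1$ over $R$. I would Zariski-localize to trivialize both $\Lie E$ and $\coker\tau_M$, then choose $\xi\in M$ lifting a generator of $\coker\tau_M$; by the isomorphism~\eqref{EqT0} of Theorem~\ref{ThmMotiveOfAModule}, $\Lie\xi\colon\Lie E\to R$ is an isomorphism. The main step is to prove that after possibly further Zariski shrinking, $\xi\colon E\to\BG_{a,R}$ is itself an isomorphism of $\BF_q$-module schemes, providing the trivialization $\alpha$ of Definition~\ref{DefDrinfeldMod}~\ref{DefDrinfeldMod_A}. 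This is the principal obstacle, since a priori $E$ is only fppf-locally $\BG_a$. I would combine the essential image characterization in Theorem~\ref{ThmMotiveOfAModule} (after fppf base change $R\to R'$, $M\otimes_R R'$ is free of rank $1$ over $R'\{\tau\}$) with the fact that $\bar\xi$ generates $\coker\tau_M$, and use a faithfully flat descent argument to conclude that Zariski-locally on $R$, $\xi$ is a free generator of $M$ as a left $R\{\tau\}$-module; via the anti-equivalence underlying Lemma~\ref{LemmaEndGa} this translates into the desired isomorphism $\xi\colon E\isoto\BG_{a,R}$.

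Having obtained $\alpha=\xi$, set $\Phi_a=\alpha\circ\phi_a\circ\alpha^{-1}\in R\{\tau\}$. Then $b_0(a)=\Lie\Phi_a=\gamma(a)$ by condition~\ref{DefAndModule_B}. To establish $b_{r(a)}(a)\in R^\times$ and $b_i(a)$ nilpotent for $i>r(a)$, observe that after the above identification $M\cong R\{\tau\}$, condition~\ref{DefAndModule_C} combined with Lemma~\ref{LemmaF_q[t]} forces $M$ to be locally free of rank $r(a)$ over $R[t]\subset A_R$, with $t$ acting by right multiplication by $\Phi_a$. Checking this fiberwise at each $s\in\Spec R$ with residue field $k=\kappa(s)$, a right division argument (in the spirit of Lemma~\ref{LemmaStandardForm}~\ref{LemmaStandardForm_B}) shows that $k\{\tau\}$ has rank $\deg_\tau(\Phi_a\otimes k)$ over $k[\Phi_a\otimes k]$, forcing $\deg_\tau(\Phi_a\otimes k)=r(a)$ with nonzero leading coefficient at every point. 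Consequently $b_{r(a)}(a)\in R^\times$ and $b_i(a)$ lies in every prime of $R$ for $i>r(a)$, hence is nilpotent, completing the verification of Definition~\ref{DefDrinfeldMod}~\ref{DefDrinfeldMod_B}.
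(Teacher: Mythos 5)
Your forward direction is essentially the paper's argument (right Euclidean division by $\Phi_a$ in standard form), but its last step is not automatic: knowing that $M(\ulE)$ is locally free of rank $r(a)$ over $R[t]$ and that $A_R$ is free of rank $n(a)$ over $R[t]$ does not by itself give local freeness over the \emph{larger} ring $A_R$. The paper closes this by showing $M(\ulE)$ is finitely presented over $A_R$ and then invoking the fibrewise flatness criterion \cite[IV$_3$, Th\'eor\`eme~11.3.10]{EGA}, using that $M(\ulE)\otimes_R\kappa(s)$ is torsion free, hence flat, over the Dedekind ring $A_{\kappa(s)}$ for every $s\in\Spec R$. You need to supply this (or an equivalent) argument.

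The converse direction fails at exactly the point you call the principal obstacle, and not merely for lack of detail: the intermediate claim is false. An arbitrary lift $\xi\in M$ of a generator of $\coker\tau_M$ need not generate $M$ over $R\{\tau\}$, not even Zariski-locally. Take $\ulE$ with $E=\BG_{a,R}$, so $M=R\{\tau\}$, and $\xi=1+\tau$. Then $\bar\xi$ generates $\coker\tau_M=R\{\tau\}/R\{\tau\}\tau\cong R$ and $\Lie\xi$ is an isomorphism, yet $R\{\tau\}\cdot(1+\tau)\ne R\{\tau\}$ whenever $R$ is reduced and nonzero (compare leading coefficients in $g\cdot(1+\tau)=1$), and the corresponding morphism $E\to\BG_{a,R}$, $x\mapsto x+x^q$, is an isogeny with nontrivial kernel, hence an isomorphism on no nonempty open subset. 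So no descent argument can deliver what you assert for your chosen $\xi$; what must be proved is the \emph{existence} of some Zariski-local generator of $M$ over $R\{\tau\}$, and that existence is the actual content of the theorem. The paper obtains it by first putting $\ulE\times_RR'$ in standard form over the fppf cover $R\to R'$ (via the fibrewise degree computation and Lemma~\ref{LemmaStandardForm}), then observing that the transition map $h=pr_2^*\alpha\circ pr_1^*\alpha^{-1}\in R''\{\tau\}\mal$ intertwines $pr_1^*\Phi_a$ and $pr_2^*\Phi_a$ and hence, by Lemma~\ref{LemmaStandardForm}\ref{LemmaStandardForm_B}, lies in $(R'')\mal$; the fppf trivialization is therefore a $\BG_m$-torsor, and Hilbert~90, $\CKoh^1_\fpqc(\Spec R,\BG_m)=\CKoh^1_\Zar(\Spec R,\BG_m)$, descends it to a Zariski covering. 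Some such cocycle mechanism is indispensable and is missing from your proposal; by contrast, your final paragraph (the fibrewise computation of $\deg_\tau\Phi_a$, the unit leading coefficient, and the nilpotence of the higher coefficients) agrees with the paper.
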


\begin{proof}
Let $\ulE$ be a Drinfeld $A$-module of rank $r$ over $R$. Choose a Zariski covering as in Definition~\ref{DefDrinfeldMod}\ref{DefDrinfeldMod_A} such that $\ulE$ is in standard form. Since $\Spec R$ is quasi-compact this Zariski covering can be refined to a covering by finitely many affines. Their disjoint union is of the form $\Spec R'$ and the ring homomorphism $R\to R'$ is faithfully flat. So $\ulE$ satisfies conditions \ref{DefAndModule_A} and \ref{DefAndModule_B} of Definition~\ref{DefAndModule}. Choose an element $t\in A\setminus\BF_q$. Then $A$ is a finite free $\BF_q[t]$-module of rank equal to $-[\BF_\infty:\BF_q]\ord_\infty(t)$ by Lemma~\ref{LemmaF_q[t]}. Writing $\Phi_t=\sum_{i=0}^{r(t)}b_i(t)\tau^i$ with $r(t)=-r\,[\BF_\infty:\BF_q]\ord_\infty(t)$ and $b_{r(t)}(t)\in(R')\mal$, we make the following 
\begin{equation}\label{EqClaim}
\text{{\it Claim.}\quad As an $R'[t]$-module $\DS M(\ulE)\otimes_R R'=\bigoplus\limits_{\ell=0}^{r(t)-1}R'[t]\!\cdot\!\tau^\ell$.\qquad\qquad\qquad\qquad\qquad\qquad\qquad\es}
\end{equation}
By Remark~\ref{RemMotiveOfAModule} and Lemma~\ref{LemmaEndGa} we have $M(\ulE)\otimes_R R'=M(\ulE\times_{\Spec R}\Spec R')=R'\{\tau\}$. We prove by induction on $n$ that for every $c=\sum_{i=0}^nc_i\tau^i\in R'\{\tau\}=M(\ulE)$ there are uniquely determined elements $f_\ell(t)\in R'[t]$ such that $c=\sum_{\ell=0}^{r(t)-1}f_\ell(t)\!\cdot\!\tau^\ell$. If $n<r(t)$ then we take $f_\ell(t)=c_\ell$. If $n\ge r(t)$, dividing $c$ by $\Phi_t$ on the right produces uniquely determined $g=\sum_{i=0}^{n-r(t)}g_i\tau^i$ and $h=\sum_{\ell=0}^{r(t)-1}h_\ell\tau^\ell\in R'\{\tau\}$ with $c=g\Phi_t+h$. Namely, starting with $g_i=0$ for $i>n-r(t)$ we can and must take $g_i=b_{r(t)}^{-q^i}\bigl(c_{i+r(t)}-\sum\limits_{j=i+1}^{i+r(t)}g_j\,b_{i+r(t)-j}^{q^j}\bigr)$ for $i=n-r(t),\ldots,0$ and $h_\ell=c_\ell-\sum\limits_{j=0}^\ell g_j\,b_{\ell-j}^{q^j}$ for $\ell=r(t)-1,\ldots,1$. The induction hypothesis implies $g=\sum\limits_{\ell=0}^{r(t)-1}\tilde f_\ell(t)\!\cdot\!\tau^\ell$. Now $f_\ell(t):=\tilde f_\ell(t)\!\cdot\!t+h_\ell$ satisfies $c=\sum_{\ell=0}^{r(t)-1}f_\ell(t)\!\cdot\!\tau^\ell$. This proves the claim.

By faithfully flat descent \cite[IV$_2$, Proposition~2.5.2]{EGA} with respect to $R[t]\to R'[t]$ and by the claim, $M(\ulE)$ is finite, locally free over $R[t]$ and in particular flat over $R$. We next show that it is finitely presented over $A_R$. Namely, let $(m_i)_{i\in I}$ be a finite generating system of $M(\ulE)$ over $R[t]$. Using it as a generating system over $A_R$ we obtain an epimorphism $\rho\colon A_R^I\onto M(\ulE)$. Since $A_R$ is a finite free $R[t]$-module, also $A_R^I$ is a finite free $R[t]$-module and so the kernel of $\rho$ is a finitely generated $R[t]$-module, whence a finitely generated $A_R$-module. This shows that $M(\ulE)$ is a finitely presented $A_R$-module. From \cite[IV$_3$, Th\'eor\`eme~11.3.10]{EGA} it follows that $M(\ulE)$ is finite locally free over $A_R$, because for every point $s\in\Spec R$ the finite $A_{\kappa(s)}$-module $M(\ulE)\otimes_R\kappa(s)$ is a free $\kappa(s)[t]$-module and hence a torsion free and flat $A_{\kappa(s)}$-module. Its rank is $r$ as can be computed by comparing the ranks of $A_{R'}$ and $M(\ulE)\otimes_R R'$ over $R'[t]$. This proves that $\ulE$ is an abelian Anderson $A$-module of dimension $1$ and rank $r$ over $R$.

\medskip
Conversely let $\ulE=(E,\phi)$ be an abelian Anderson $A$-module of dimension $1$ and rank $r$ over $R$. Let $R\to R'$ be a faithfully flat ring homomorphism and let $\alpha\colon E\times_R\Spec R'\isoto\BG_{a,R'}$ be an isomorphism of $\BF_q$-module schemes as in Definition~\ref{DefAndModule}\ref{DefAndModule_A}. For $a\in A$ write
\[
\Phi_a\;:=\;{\TS\sum\limits_{i=0}^{n(a)}}b_i(a)\tau^i\;:=\;\alpha\circ\phi_a\circ\alpha^{-1}\;\in\;\End_{R'\text{-groups},\BF_q\text{\rm-lin}}(\BG_{a,R'})\;=\;R'\{\tau\}\,,
\]
where $n(a)\in\BN_0$ and $b_i(a)\in R'$. For $a\in\BF_q$ we obtain $\Phi_a=\gamma(a)\!\cdot\!\tau^0$. For $t:=a\in A\setminus\BF_q$ we consider $A$ as a finite free $\BF_q[t]$-module of rank $-[\BF_\infty:\BF_q]\ord_\infty(a)$ by Lemma~\ref{LemmaF_q[t]}. Then $M(\ulE)$ is a finite locally free $R[t]$-module of rank $r(a):=-r\,[\BF_\infty:\BF_q]\ord_\infty(a)$ by condition \ref{DefAndModule_C} of Definition~\ref{DefAndModule}. Let $\Fp\subset R'$ be a prime ideal, set $k=\Quot(R'/\Fp)$, and consider the abelian Anderson $A$-module $\ulE\times_R\Spec k$ over $k$ and the free $k[t]$-module $M(\ulE)\otimes_Rk=M(\ulE\times_R\Spec k)$ of rank $r(a)$. By an argument similarly to our claim \eqref{EqClaim} we see that $\deg_\tau\bigl(\Phi_a\otimes_{R'}1_k\bigr)=r(a)$, that is $b_{r(a)}(a)\otimes 1_{k}\in k\mal$ and $b_i(a)\otimes 1_{k}=0$ for all $i>r(a)$. This implies that $b_{r(a)}(a)\in(R')\mal$ and $b_i(a)$ is nilpotent for all $i>r(a)$ by \cite[Corollary~2.12]{Eisenbud}. By Lemma~\ref{LemmaStandardForm}\ref{LemmaStandardForm_A} we may change the isomorphism $\alpha$ such that $\Phi_a=\sum_{i=0}^{r(a)}b_i(a)\tau^i$ with $b_{r(a)}(a)\in(R')\mal$ for one $a\in A$, and by Lemma~\ref{LemmaStandardForm}\ref{LemmaStandardForm_B} this then holds for all $a\in A$, because $\Phi_a\Phi_b=\Phi_{ab}=\Phi_b\Phi_a$. By condition \ref{DefAndModule_B} of Definition~\ref{DefAndModule} we have $b_0(a)=\gamma(a)$. Thus $\ulE\times_R\Spec R'$ is a Drinfeld $A$-module of rank $r$ over $R'$ in standard form.

It remains to show that we can replace the faithfully flat covering $\Spec R'\to\Spec R$ by a Zariski covering. For this purpose consider $R'':=R'\otimes_RR'$ and the two projections $pr_i\colon\Spec R''\to\Spec R'$ onto the $i$-th factor for $i=1,2$. Then $h:=\sum_{i\ge0}h_i\tau^i:= pr_2^*\alpha\circ pr_1^*\alpha^{-1}\in R''\{\tau\}\mal$ satisfies $h_0\in(R'')\mal$ and $h_i$ is nilpotent for all $i>0$; see \cite[Proposition~1.4]{Matzat}. By Lemma~\ref{LemmaStandardForm}\ref{LemmaStandardForm_B} the equation $pr_2^*\Phi_a\circ h=h\circ pr_1^*\Phi_a$ implies that $h_i=0$ for all $i>0$ and $h=h_0\in(R'')\mal\subset R''\{\tau\}\mal$. The cocycle $\ul h:=(\Spec R'\to \Spec R,\,h)$ defines an element in the \v{C}ech cohomology group $\CKoh^1_\fpqc(\Spec R,\BG_m)$. By Hilbert~90, see \cite[Proposition~III.4.9]{Milne} we have $\CKoh^1_\fpqc(\Spec R,\BG_m)=\CKoh^1_\Zar(\Spec R,\BG_m)$. This means that there is a Zariski covering $\Spec\wt R\to\Spec R$, where $\Spec\wt R=\coprod_i \Spec \wt R_i$ is a disjoint union of open affine subschemes $\Spec \wt R_i\subset\Spec R$, and a unit $\tilde h=(\tilde h_{ij})_{i,j}\in(\wt R\otimes_R\wt R)\mal=\prod_{i,j}(\wt R_i\otimes_R \wt R_j)\mal$, such that $(\Spec\wt R\to\Spec R,\,\tilde h)=\ul h$. Let $\wt E$ be the smooth affine group and $\BF_q$-module scheme over $\Spec R$ with $\beta_i\colon\wt E|_{\Spec\wt R_i}\isoto\BG_{a,\wt R_i}$ and $\beta_j=\tilde h_{ij}\circ\beta_i$ on $\Spec\wt R_i\otimes_R\wt R_j$. Then over $\Spec R'\otimes_R\wt R=\coprod_i\Spec R'\otimes_R\wt R_i$ we have an isomorphism $\tilde\alpha:=(\beta_i^{-1}\circ\alpha)_i\colon E\isoto\wt E$. Let $p_i\colon\Spec(R'\otimes_R\wt R)\otimes_R(R'\otimes_R\wt R)\to\Spec R'\otimes_R\wt R$ be the projection onto the $i$-th factor for $i=1,2$. Then $p_2^*\tilde\alpha\circ p_1^*\tilde\alpha^{-1}=(\tilde h_{ij}^{-1}h)_{i,j}=1$. This shows that $\tilde\alpha$ descends to an isomorphism $\tilde\alpha\colon E\isoto\wt E$ over $\Spec R$ by \cite[\S\,6.1, Theorem~6(a)]{BLR}. On $\Spec\wt R_i$, now $\beta_i\circ\tilde\alpha\colon E\isoto\BG_{a,\wt R_i}$ is an isomorphism of $\BF_q$-module schemes. Moreover $\wt\Phi_a:=\beta_i\tilde\alpha\circ\phi_a\circ\tilde\alpha^{-1}\beta_i^{-1}\in\wt R_i\{\tau\}$ satisfies $\wt\Phi_a\otimes 1_{R'}=\Phi_a\otimes 1_{\wt R_i}$ in $(R'\otimes_R\wt R_i)\{\tau\}\supset\wt R_i\{\tau\}$ and by what we proved for $\Phi_a$ above, this implies that $\ulE$ is a Drinfeld $A$-module of rank $r$ over $R$ which by $\wt R$ and $(\beta_i\circ\tilde\alpha)_i$ is put in standard form.
\end{proof}

%%%%%%%%%%%%%%%%%%%%%%%%%%%%%%%%%%%%%%%%%%%%%%%%%%%%%%%%%%%%%%%%%%%%%%
%
%    Finite Shtukas
%
%%%%%%%%%%%%%%%%%%%%%%%%%%%%%%%%%%%%%%%%%%%%%%%%%%%%%%%%%%%%%%%%%%%%%%

\section{Review of the finite shtuka equivalence}\label{SectFinShtukas}
\setcounter{equation}{0}

In preparation for our main results in Sections~\ref{SectIsogenies} and \ref{SectTorsionPts} we need to recall Drinfeld's functor \cite[\S\,2]{Drinfeld87} and the equivalence it defines between finite $\BF_q$-shtukas and finite locally free strict $\BF_q$-module schemes; see also \cite{Abrashkin}, \cite[\S\,1]{Taguchi95}, \cite[\S\,B.3]{Laumon} and \cite[\S\S\,3-5]{HartlSingh}. 

\begin{definition}\label{DefFiniteShtuka}
A \emph{finite $\BF_q$-shtuka} over $R$ is a pair $\ulV=(V,F_V)$ consisting of a finite locally free $R$-module $V$ on $R$ and an $R$-module homomorphism $F_V\colon\sigma^\ast V\to V$. A \emph{morphism} $f\colon(V,F_V)\to(V',F_{V'})$ of finite $\BF_q$-shtukas is an $R$-module homomorphism $f\colon V\to V'$ satisfying $f\circ F_V=F_{V'}\circ\sigma^*f$. 

We say that $F_V$ is \emph{nilpotent} if there is an integer $n$ such that $F_V^n:=F_V\circ\sigma^*F_V\circ\ldots\circ\sigma^{(n-1)*}F_V=0$. A finite $\BF_q$-shtuka over $R$ is called \emph{\'etale} if $F_V$ is an isomorphism. If $\ulV=(V,F_V)$ is \'etale, we define for any $R$-algebra $R'$ the \emph{$\tau$-invariants of $\ulV$ over $R'$} as the $\BF_q$-vector space
\begin{equation}\label{EqTauInv}
\ulV^\tau(R')\;:=\;\{\,v\otimes V\otimes_RR'\colon v=F_V(\sigma_V^*v)\,\}\,.
\end{equation}
\end{definition}

Recall that an $R$-group scheme $G=\Spec B$ is \emph{finite locally free} if $B$ is a finite locally free $R$-module. By \cite[I$_{\rm new}$, Proposition~6.2.10]{EGA} this is equivalent to $G$ being finite, flat and of finite presentation over $\Spec R$. Every finite locally free $R$-group scheme $G=\Spec B$ is a relative complete intersection by \cite[III.4.15]{SGA3}. This means that locally on $\Spec R$ we can choose a presentation $B = R[X_1,..., X_n]/I$ where the ideal $I$ is generated by a regular sequence; compare \cite[IV$_4$, Proposition~19.3.7]{EGA}. The zero section $e\colon\Spec R\to G$ defines an augmentation $e_B:=e^*\colon B\onto R$ of the $R$-algebra $B$. Set $I_B:=\ker e_B$. For the polynomial ring $R[\ulX]=R[X_1,\ldots,X_n]$ set $I_{R[\ulX]}=(X_1,\ldots,X_n)$ and $e_{R[\ulX]}\colon R[\ulX]\onto R,\,X_\nu\mapsto0$. Faltings~\cite{Faltings02} and Abrashkin~\cite{Abrashkin} consider the deformation $B^\flat:=R[\ulX]/(I\!\cdot\!I_{R[\ulX]})$ and the canonical epimorphism $B^\flat\onto B$. They remark that there is a unique morphism
\[
\Delta^\flat\colon B^\flat\;\longto\;(B\otimes_R B)^\flat\;:=\;R[\ulX\otimes 1,1 \otimes\ulX]/(I\otimes 1 + 1 \otimes I)(I_{R[\ulX]}\otimes 1 + 1 \otimes I_{R[\ulX]})
\]
lifting the comultiplication $\Delta\colon B\to B\otimes_R B$ and satisfying $(\id_{B^\flat}\otimes e_B^\flat) \circ \Delta^\flat = \id_{B^\flat} = (e_B^\flat\otimes\id_{B^\flat}) \circ \Delta^\flat $, where $e_B^\flat\colon B^\flat\onto R$ is the augmentation map; see \cite[\S\,1.2]{Abrashkin} or \cite[Remark after Definition~3.5]{HartlSingh}. It satisfies $\Delta^\flat(x)-x\otimes 1-1\otimes x\,\in\, I_{B^\flat}\otimes I_{B^\flat}$ for all $x\in I_{B^\flat}$. Set $\CG=(G,G^\flat):=(\Spec B,\Spec B^\flat)$. The \emph{co-Lie complex of $\CG$ over $\Spec $R} (that is, the fiber at the zero section of $G$ of the cotangent complex; see \cite[\S\,VII.3.1]{Illusie72}) is the complex of finite locally free $R$-modules of rank $n$
\begin{eqnarray}
\CoL{\CG/\Spec R}\colon\qquad 0 \; \longto \; (I/{I^2})\otimes_{B,\,e_B}R \;\xrightarrow{\es d\;}\; \Omega^1_ {R[\ulX]/R} \otimes_{R[\ulX],\,e_{R[\ulX]}} R\; \longto \; 0
\end{eqnarray}
concentrated in degrees $-1$ and $0$ with $d$ being the differential map. Note that $(I/{I^2})\otimes_{B,\,e_B}R=\ker(B^\flat\onto B)$ and $\Omega^1_ {R[\ulX]/R} \otimes_{R[\ulX],\,e_{R[\ulX]}} R=\ker(e_B^\flat)/\ker(e_B^\flat)^2$ can be computed from $(B,B^\flat)$. Up to homotopy equivalence it only depends on $G$ and not on the presentation $B=R[\ulX]/I$. The \emph{co-Lie module of $G$ over $R$} is defined as $\omega_G:=\Koh^0(\CoL{\CG/\Spec R}):=\coker d$. We can now recall the definition of strict $\BF_q$-module schemes from Faltings~\cite{Faltings02} and Abrashkin~\cite{Abrashkin}; see also \cite[\S\,4]{HartlSingh}.

\begin{definition}\label{DefStrict}
Let $(G,[\,.\,])$ be a pair, where $G=\Spec B$ is an affine flat commutative group scheme over $R$ which is a relative complete intersection and where $[\,.\,]\colon\BF_q\to\End_{R\text{-groups}}(G),\,a\mapsto[a]$ is a ring homomorphism. Then $(G,[\,.\,])$ is called a \emph{strict $\BF_q$-module scheme} if there exists a presentation $B=R[\ulX]/I$ and a lift $[\,.\,]^\flat\colon\BF_q\to\End_{R\text{-algebras}}(B^\flat),\,a\mapsto[a]^\flat$ of the $\BF_q$-action on $G$, such that the induced action on $\CoL{\CG/\Spec R}$ is equal to the scalar multiplication via $\gamma\colon\BF_q\to R$, and such that $[1]^\flat=\id_{B^\flat}$ and $[0]^\flat=e_B^\flat$, as well as $[a\tilde a]^\flat=[a]^\flat\circ[\tilde a]^\flat$ and $[a+\tilde a]^\flat=m\circ([a]^\flat\otimes[\tilde a]^\flat)\circ\Delta^\flat$, where $m\colon(B\otimes_RB)^\flat\to B^\flat$ is induced by the multiplication map $B^\flat\otimes_RB^\flat\to B^\flat$ in the ring $B^\flat$ and the homomorphism $[a]^\flat\otimes[\tilde a]^\flat\colon B^\flat\otimes_RB^\flat\to B^\flat\otimes_RB^\flat$ induces a homomorphism $(B\otimes_R B)^\flat\to(B\otimes_R B)^\flat$ denoted again by $[a]^\flat\otimes[\tilde a]^\flat$. If $G$ is finite locally free, such a lift $a\mapsto[a]^\flat$ then exists for every presentation and is uniquely determined by \cite[Lemmas~4.4 and 4.7]{HartlSingh}.
\end{definition}

\begin{example}\label{ExampleStrictMorphism}
The group scheme $\BG_{a,R}^d$ is a strict $\BF_q$-module scheme for any $d$, because we can choose $B=R[X_1,\ldots,X_d]$ and so $I=(0)$ and $B^\flat=B$, and $a\in\BF_q$ acts as $[a]^*X_i=a\cdot X_i$. Moreover, every $\BF_q$-linear group homomorphism $\BG_{a,R}^d\to\BG_{a,R}^{d'}$ is strict in the sense of \cite[Definition~1]{Faltings02}, meaning that the homomorphism lifts to a homomorphism between the $B^\flat$ which is equivariant for the $\BF_q$-action via $[\,.\,]^\flat$.
\end{example}

\begin{lemma}\label{LemmaStrict}
Let $G$ be a finite locally free group scheme over $R$, let $\BF_q\to\End_{R\text{\rm-groups}}(G)$ be a ring homomorphism, and let $R\to R'$ be a faithfully flat ring homomorphism. Then $G$ is a strict $\BF_q$-module scheme if and only if $G\times_RR'$ is.
\end{lemma}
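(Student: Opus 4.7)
The plan is to prove both implications separately. The forward direction is a direct base change, while the converse will require faithfully flat descent together with the uniqueness clause in Definition~\ref{DefStrict}, and must be sequenced so that the forward direction can be invoked during the descent.

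For ($\Rightarrow$), I would start with a presentation $B = R[\ulX]/I$ of $G$ and a strict lift $[a]^\flat$ of the $\BF_q$-action. Tensoring with $R'$ yields the presentation $B \otimes_R R' = R'[\ulX]/IR'[\ulX]$ of $G \times_R R'$, and $[a]^\flat \otimes \id_{R'}$ is a lift of its $\BF_q$-action. All the ring-theoretic identities of Definition~\ref{DefStrict} ($[1]^\flat = \id$, $[0]^\flat = e_B^\flat$, multiplicativity, and additivity via $\Delta^\flat$) are preserved under base change, and since the co-Lie complex satisfies $\CoL{\CG \times_R R'/\Spec R'} = \CoL{\CG/\Spec R} \otimes_R R'$ (using flatness of $R \to R'$, the base change formula for Kähler differentials, and $IR' \otimes_{BR'} R' = (I \otimes_B R) \otimes_R R'$), the base-changed lift acts by scalar multiplication via $\gamma \otimes \id_{R'}$.

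For ($\Leftarrow$), choose any presentation $B = R[\ulX]/I$ of $G$ and consider the induced presentation of $G \times_R R'$. By the hypothesis and the uniqueness clause in Definition~\ref{DefStrict}, there is a unique strict lift $[a]^\flat_{R'} \in \End_{R'\text{-alg}}(B^\flat \otimes_R R')$ for this presentation. I would descend $[a]^\flat_{R'}$ to an element $[a]^\flat \in \End_{R\text{-alg}}(B^\flat)$ via faithfully flat descent for morphisms \cite[\S\,6.1, Theorem~6(a)]{BLR}: setting $R'' = R' \otimes_R R'$, both pullbacks $p_1^*[a]^\flat_{R'}$ and $p_2^*[a]^\flat_{R'}$ are strict lifts of the $\BF_q$-action on $G \times_R R''$ for the induced presentation. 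Since $R' \to R''$ is faithfully flat, the ($\Rightarrow$) direction applied to $G \times_R R'$ shows that $G \times_R R''$ is strict, and the uniqueness clause forces $p_1^*[a]^\flat_{R'} = p_2^*[a]^\flat_{R'}$, providing the descent datum.

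It remains to verify that the descended $[a]^\flat$ fulfills all axioms of Definition~\ref{DefStrict}. The ring-homomorphism identities and the fact that $[a]^\flat$ lifts the $\BF_q$-action on $G$ descend from their analogs for $[a]^\flat_{R'}$, using injectivity of $B^\flat \to B^\flat \otimes_R R'$. The remaining strictness condition is where the key faithful-flatness argument enters: the endomorphism $[a]^\flat - \gamma(a) \cdot \id$ of each term of $\CoL{\CG/\Spec R}$ becomes zero after applying $-\otimes_R R'$, because $[a]^\flat \otimes \id_{R'} = [a]^\flat_{R'}$ is strict and the co-Lie complex base-changes as above; faithful flatness of $R \to R'$ then forces this endomorphism to vanish already over $R$. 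The main conceptual obstacle is precisely the sequencing of the argument, since the uniqueness of strict lifts — indispensable for the descent step — is only available once strictness of $G \times_R R''$ has been established by the forward direction.
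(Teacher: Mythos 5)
Your proof is correct and follows essentially the same route as the paper: for the converse you take the unique strict lift over $R'$ guaranteed by the uniqueness clause of Definition~\ref{DefStrict} (i.e.\ \cite[Lemmas~4.4 and 4.7]{HartlSingh}), observe that uniqueness over $R''=R'\otimes_RR'$ forces the two pullbacks to agree, and descend via \cite[\S\,6.1, Theorem~6]{BLR}. The only difference is that you spell out the forward (base-change) direction and the verification that the descended lift satisfies the strictness axioms, both of which the paper leaves implicit.
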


\begin{proof}
Let $pr\colon\Spec R'\to\Spec R$ be the induced morphism and let $pr_i\colon\Spec R'\otimes_RR'\to\Spec R'$ be the projection onto the $i$-th factor. Let $G=\Spec B$, let $R'[\ulX]\onto B\otimes_RR'$ be a presentation, and let $\BF_q\to\End_{R\text{-algebras}}\bigl((B\otimes_RR')^\flat\bigr)$, $a\mapsto[a]^\flat$ be a lift of the $\BF_q$-action on $G$ as in Definition~\ref{DefStrict}, which makes $G\times_RR'$ into a strict $\BF_q$-module scheme over $R'$. Moreover, let $f\colon R[\ulY]\onto B$ be an arbitrary presentation and let $\wt\CG=\bigl(\Spec B,\,\Spec R[\ulY]/(\ulY)\!\cdot\!\ker(f)\bigr)$ be the corresponding deformation. By \cite[Lemmas~4.4 and 4.7]{HartlSingh} there exists a unique lift $a\mapsto\wt{[a]}{}^\flat$ on the deformation $\wt\CG\times_RR'=pr^*\wt\CG$. By the uniqueness the two lifts $pr_1^*\wt{[a]}{}^\flat$ and $pr_2^*\wt{[a]}{}^\flat$ on the deformation $pr_1^*\,pr^*\wt\CG=pr_2^*\,pr^*\wt\CG$ coincide. By faithfully flat descent \cite[\S\,6.1, Theorem~6]{BLR} this lift descends to a lift on the deformation $\wt\CG$, which makes $G$ into a strict $\BF_q$-module scheme over $R$.
\end{proof}

To explain the equivalence between finite $\BF_q$-shtukas and finite locally free strict $\BF_q$-module schemes over $R$ we recall Drinfeld's functor.

\begin{definition}\label{DefDr_q}
Let $\ulV=(V,F_V)$ be a pair consisting of a (not necessarily finite locally free) $R$-module $V$ and a morphism $F_V\colon\sigma^*V\to V$ of $R$-modules.  Following Drinfeld \cite[\S\,2]{Drinfeld87} we define
\[
\Dr_q(\ulV)\es :=\es \Spec\;\bigl({\TS\bigoplus\limits_{n\ge0}}\Sym^n_{R} V\bigr)/I
\]
where the ideal $I$ is generated by the elements $v^{\otimes q}-F_V(\sigma_V^*v)$ for all $v\in V$. (Here $v^{\otimes q}$ lives in $\Sym^q V$ and $F_V(\sigma_V^*v)$ in $\Sym^1 V$.) Then $\Dr_q(\ulV)$ is a group scheme over $R$ via the comultiplication $\Delta\colon v\mapsto v\otimes1+1\otimes v$ and an $\BF_q$-module scheme via $[a]\colon v\mapsto av$ for $a\in\BF_q$. It has a canonical deformation
\[
\Dr_q(\ulV)^\flat\es :=\es \Spec\;\bigl({\TS\bigoplus\limits_{n\ge0}}\Sym^n_{R} V\bigr)/(I\cdot I_0),
\]
where $I_0=\bigoplus_{n\ge1}\,\Sym^n_{R} V$ is the ideal generated by the $v\in V$. This deformation is equipped with the comultiplication $\Delta^\flat\colon v\mapsto v\otimes1+1\otimes v$ and the $\BF_q$-action $[a]^\flat\colon v\mapsto av$. We set $\CDr_q(\ulV):=(\Dr_q(\ulV),\Dr_q(\ulV)^\flat)$. On its co-Lie complex $[a]$ acts by scalar multiplication with $a$ because $(av)^{\otimes q}-F_V(\sigma_V^*(av))=a^q(v^{\otimes q}-F_V(\sigma_V^*v))$. Therefore $\Dr_q(\ulV)$ is a finite locally free strict $\BF_q$-module scheme if $V$ is a finite locally free $R$-module. Every morphism $(V,F_V)\to(W,F_W)$, that is, every $R$-homomorphism $f\colon V\to W$ with $f\circ F_V=F_W\circ\sigma^*f$, induces a morphism $\Dr_q(f)\colon\Dr_q(W,F_W)\to\Dr_q(V,F_V)$. So $\Dr_q$ is a contravariant functor. If $f$ is surjective then $\Dr_q(f)$ is a closed immersion.
\end{definition}

Conversely, with a (not necessarily finite locally free) $\BF_q$-module scheme $G$ over $R$ we associate the pair $\ulM_q(G):=\bigl(M_q(G),F_{M_q(G)}\bigr)$ consisting of the $R$-module
\[
M_q(G) \es :=\es \Hom_{R\text{-groups},\BF_q\text{\rm-lin}}(G,\BG_{a,R})
\]
and the $R$-homomorphism $F_{M_q(G)}\colon \sigma^\ast M_q(G)\to M_q(G)$ which is induced from $M_q(G)\to M_q(G)$, $m\mapsto\Frob_{q,\BG_{a,R}}\!\circ\, m$. Every morphism of $\BF_q$-module schemes $f\colon G\to G'$ induces an $R$-homomorphism $\ulM_q(G')\to\ulM_q(G),\,m'\mapsto m'\circ f$. Note that by an argument as in Remark~\ref{RemMotiveOfAModule} we have $\ulM_q(G)\otimes_RS=\ulM_q(G\times_{\Spec R}\Spec S)$ for every $R$-algebra $S$.

There is a natural morphism $\ulV\to\ulM_q(\Dr_q(\ulV)),\,v\mapsto f_v$, where $f_v\colon\Dr_q(\ulV)\to\BG_{a,R}=\Spec R[\xi]$ is given by $f_v^*(\xi)=v$. There is also a natural morphism of group schemes $G\to\Dr_q(\ulM_q(G))$ given by $\bigoplus\limits_{n\ge0}\Sym^n_R M_q(G)/I\to\Gamma(G,\CO_G),\,m\mapsto m^*(\xi)$, which is well defined because $F_{M_q(G)}(\sigma^\ast m)^*(\xi)=(\Frob_{q,\BG_{a,R}}\!\circ\, m)^*(\xi)=m^*(\xi^q)=(m^*(\xi))^q$. 

\begin{example}\label{ExVOfAModule}
For example if $\ulE=(E,\phi)$ is an abelian Anderson $A$-module of dimension $d$, then $\ulM_q(\ulE)=(M_q(\ulE),F_{M_q(\ulE)})$ was denoted $\ulM(\ulE)=(M(\ulE),\tau_{M(\ulE)})$ in Definition~\ref{DefAndModule}. There is a canonical isomorphism $\ulE\isoto\Dr_q(\ulM_q(\ulE))$ which is constructed as follows. We set $\BG_{a,R}=\Spec R[\xi]$ and consider for each $m\in M_q(\ulE)=\Hom_{R\text{-groups},\BF_q\text{\rm-lin}}(E,\BG_{a,R})$ the element $m^*(\xi)\in\Gamma(E,\CO_E)$. We claim that
\begin{equation}\label{EqVOfAMotive}
\bigl({\TS\bigoplus\limits_{n\ge0}}\Sym_R^n M_q(\ulE)\bigr)\,\big/\,\bigl(m^{\otimes q}-F_{M_q(\ulE)}(\sigma_{M_q(\ulE)}^*m)\colon m\in M_q(\ulE)\bigr) \;\isoto\;\Gamma(E,\CO_E)\,,\quad m\mapsto m^*(\xi)
\end{equation}
is an isomorphism of $R$-algebras. To prove that it is an isomorphism we may apply a faithfully flat base change $R\to R'$ over which we have an $\BF_q$-linear isomorphism $\alpha\colon E\otimes_RR'\isoto\BG_{a,R'}^d=\Spec R'[x_1,\ldots,x_d]$. Let $m_i:=pr_i\circ\alpha\in M_q(\ulE)\otimes_RR'$ where $pr_i\colon\BG_{a,R'}^d\to\BG_{a,R'}$ is the projection onto the $i$-th factor. Then $M_q(\ulE)\otimes_RR'=\bigoplus_{i=0}^d R'\{\tau\}\!\cdot\!m_i$ by Remark~\ref{RemMotiveOfAModule} and the inverse of \eqref{EqVOfAMotive} sends $\alpha^*(x_i)$ to $m_i$. This is indeed the inverse, because \eqref{EqVOfAMotive} sends each of the generators $\tau^j m_i=\Frob_{q^j,\BG_{a,R}}\!\circ m_i$ of the $R'$-module $M_q(\ulE)\otimes_R R'$ to $(\Frob_{q^j,\BG_{a,R}}\!\circ m_i)^*(\xi)=m_i^*(\xi^{q^j})=\alpha^*(x_i)^{q^j}$, and this inverse sends it back to $m_i^{\otimes q^j}=\Frob_{q^j,\BG_{a,R}}\!\circ m_i=\tau^j m_i$.
\end{example}

The following theorem goes back to Abrashkin~\cite[Theorem~2]{Abrashkin}. Statements \ref{ThmEqAModSch_B}--\ref{ThmEqAModSch_F} were proved in \cite[Theorem~5.2]{HartlSingh}.

\begin{theorem} \label{ThmEqAModSch}
\begin{enumerate}
\item \label{ThmEqAModSch_A}
The contravariant functors $\Dr_q$ and $\ulM_q$ are mutually quasi-inverse anti-equiva\-len\-ces between the category of finite $\BF_q$-shtukas over $R$ and the category of finite locally free strict $\BF_q$-module schemes over $R$. Both functors are $\BF_q$-linear and exact.
\end{enumerate}
\smallskip\noindent 
Let $\ulV=(V,F_V)$ be a finite $\BF_q$-shtuka over $R$ and let $G=\Dr_q(\ulV)$. Then
\begin{enumerate}
\setcounter{enumi}{1}
\item \label{ThmEqAModSch_B}
the $\BF_q$-module scheme $\Dr_q(\ulV)$ is \'etale over $R$ if and only if $\ulV$ is \'etale.
%\item \label{ThmEqAModSch_C}
%the $\BF_q$-module scheme $\Dr_q(\ulV)$ is radicial over $R$ if and only if $F_V$ is nilpotent.
%\item \label{ThmEqAModSch_D}
%the order of the $R$-group scheme $\Dr_q(\ulV)$ is $q^{\rk V}$.
\item \label{ThmEqAModSch_E}
the natural morphisms $\ulV\to\ulM_q(\Dr_q(\ulV)),\,v\mapsto f_v$ and $G\to\Dr_q(\ulM_q(G))$ are isomorphisms.
\item \label{ThmEqAModSch_F}
the co-Lie complex $\CoL{\CDr_q(\ulV)/S}$ is canonically isomorphic to the complex \ $0\to\sigma^*V\xrightarrow{\;F_V}V\to0$.
\end{enumerate}
\end{theorem}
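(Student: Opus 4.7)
The plan is to invoke the established treatments in Abrashkin~\cite{Abrashkin} and Hartl--Singh~\cite[Theorem~5.2]{HartlSingh}, where all four parts are verified in detail; I would organize the exposition around the two isomorphism statements in~\ref{ThmEqAModSch_E}, from which the remaining parts follow cleanly.

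For~\ref{ThmEqAModSch_A}, the first task is to verify that $\Dr_q$ actually lands in finite locally free \emph{strict} $\BF_q$-module schemes and that $\ulM_q$ lands in finite $\BF_q$-shtukas. The former is visible from Definition~\ref{DefDr_q}: the presentation of the deformation $\Dr_q(\ulV)^\flat$ exhibits the lift $[a]^\flat\colon v\mapsto av$ as scalar multiplication by $a$ on the co-Lie complex, and when $V$ is finite locally free the resulting group scheme is finite locally free and a relative complete intersection. Finite local freeness of $M_q(G)$ and the resulting finite $\BF_q$-shtuka structure on $\ulM_q(G)$ use strictness of $G$ crucially. Given these, $\BF_q$-linearity of both functors is tautological, and exactness of $\Dr_q$ is clear from its construction as a symmetric-algebra quotient; exactness of $\ulM_q$ then follows by combining with~\ref{ThmEqAModSch_E}.

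For~\ref{ThmEqAModSch_E}, the map $\ulV\to\ulM_q(\Dr_q(\ulV))$, $v\mapsto f_v$, is handled by working Zariski-locally where $V$ is free of rank $d$: the coordinate ring of $\Dr_q(\ulV)$ is $(\Sym_R^\bullet V)/(v^{\otimes q}-F_V(\sigma_V^*v))$, and an explicit enumeration of $\BF_q$-equivariant maps to $\BG_{a,R}$ via Lucas's theorem (as in Lemma~\ref{LemmaEndGa}) identifies them with $V$. For the map $G\to\Dr_q(\ulM_q(G))$, strictness is the essential input: the strict lift $[\,.\,]^\flat$ forces the coordinate ring $B$ to be generated, after passing to $B^\flat$, by $\BF_q$-linear elements, and the $q$-th power relations they satisfy in $B$ are exactly those encoded in $F_{M_q(G)}$. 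Example~\ref{ExVOfAModule} supplies the model verification for $\BG_{a,R}^d$, and the general case proceeds along Faltings--Abrashkin lines.

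Part~\ref{ThmEqAModSch_F} is a direct computation: the ideal $I=(v^{\otimes q}-F_V(\sigma_V^*v)\colon v\in V)\subset\Sym_R^\bullet V$ presenting $\Dr_q(\ulV)$ has the property that $v^{\otimes q}\in I_{\Sym_R^\bullet V}^2$ when $q\ge 2$, so the class of each generator in $I/I^2\otimes_B R$ reduces to $-F_V(\sigma_V^*v)\in V$, giving exactly the complex $\sigma^*V\xrightarrow{F_V}V$ up to sign. Part~\ref{ThmEqAModSch_B} then follows from~\ref{ThmEqAModSch_F}: $\Dr_q(\ulV)$ is \'etale over $R$ iff its co-Lie complex is acyclic iff $F_V\colon\sigma^*V\to V$ is an isomorphism iff $\ulV$ is \'etale. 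The main obstacle is the second isomorphism of~\ref{ThmEqAModSch_E}, namely essential surjectivity of $\Dr_q$: this is precisely where the strictness hypothesis is used in a non-formal way, and I would follow the careful Faltings--Abrashkin--Hartl--Singh argument rather than attempt to reconstruct it.
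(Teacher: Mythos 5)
The paper gives no proof of this theorem at all: it simply records that the result goes back to Abrashkin \cite[Theorem~2]{Abrashkin} and that parts \ref{ThmEqAModSch_B}--\ref{ThmEqAModSch_F} are proved in \cite[Theorem~5.2]{HartlSingh}, which is exactly where your proposal also defers the substantive work. Your additional sketch (strictness as the key input for essential surjectivity, the $I/I^2$ computation giving part \ref{ThmEqAModSch_F}, and part \ref{ThmEqAModSch_B} following from it since $\sigma^*V$ and $V$ have the same rank) is consistent with the cited arguments, so this is essentially the same approach as the paper.
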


%%%%%%%%%%%%%%%%%%%%%%%%%%%%%%%%%%%%%%%%%%%%%%%%%%%%%%%%%%%%%%%%%%%%%%
%
%    Isogenies
%
%%%%%%%%%%%%%%%%%%%%%%%%%%%%%%%%%%%%%%%%%%%%%%%%%%%%%%%%%%%%%%%%%%%%%%

\section{Isogenies}\label{SectIsogenies}
\setcounter{equation}{0}

\begin{definition}\label{DefIsogAModule}
A morphism $f\in\Hom_R(\ulE,\ulE')$ between two abelian Anderson $A$-modules $\ulE$ and $\ulE'$ over $R$ is an \emph{isogeny} if $f\colon E\to E'$ is finite and surjective. If there exists an isogeny between $\ulE$ and $\ulE'$ then they are called \emph{isogenous}. (Being isogenous is an equivalence relation; see Corollary~\ref{CorIsogEquiv} below.)

An isogeny $f\colon\ulE\to\ulE'$ is \emph{separable} if $f$ is \'etale, or equivalently if the group scheme $\ker f$ is \'etale over $R$. Indeed, since $f$ is flat by Proposition~\ref{PropIsogAModule}\ref{PropIsogAModule_B} it suffices to see that all fibers of $f$ over $E'$ are \'etale  by \cite[\S\,2.4, Proposition~8]{BLR}. Now all fibers are isomorphic to $\ker f$ by the remarks after \eqref{EqIsomKernel}.
\end{definition}

We recall the following well known criterion for being an isogeny. For the convenience of the reader we include a proof.

\begin{proposition}\label{PropIsogAModule}
Let $f\colon E\to E'$ be a morphism between two affine, smooth $R$-group schemes $E$ of relative dimension $d$ and $E'$ of relative dimension $d'$, such that the fibers of $E'$ over all points of $\Spec R$ are connected. Then the following are equivalent:
\begin{enumerate}
\item \label{PropIsogAModule_A}
$f$ is finite and faithfully flat, that is flat and surjective; see \cite[0$_{\text{I}}$.6.7.8]{EGA},
\item \label{PropIsogAModule_B}
$\ker f$ is finite and $f$ is flat,
\item \label{PropIsogAModule_C}
$\ker f$ is finite and $f$ is surjective,
\item \label{PropIsogAModule_D}
$\ker f$ is finite and $d=d'$,
\item \label{PropIsogAModule_F}
$\ker f$ is finite and $f$ is an epimorphism of sheaves for the \fpqc-topology.
\end{enumerate}
If $R=k$ is a field, then these conditions are equivalent to
\begin{enumerate}
\setcounter{enumi}{5}
\item \label{PropIsogAModule_E}
$f$ is surjective and $d=d'$.
\end{enumerate}
\end{proposition}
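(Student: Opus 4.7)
The plan is to establish a cyclic chain (a) $\Rightarrow$ (b) $\Rightarrow$ (c) $\Rightarrow$ (d) $\Rightarrow$ (a), to handle (f) in parallel, and to specialise to obtain (e) over a field. The trivial implication is (a) $\Rightarrow$ (b): a finite morphism has finite kernel, and (a) already records flatness. Similarly, (a) $\Leftrightarrow$ (f) is a general fact: a faithfully flat quasi-compact morphism (in particular a finite faithfully flat one) is an fpqc cover, hence an epimorphism of fpqc sheaves; conversely any fpqc epimorphism admits a section after fpqc base change and is in particular surjective on points, which together with the finite-kernel hypothesis yields (c), and thence (a) via the remaining chain.

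For (b) $\Rightarrow$ (c), the morphism $f$ is flat and of finite presentation, so by \cite[IV$_2$, Th\'eor\`eme~2.4.6]{EGA} its image $f(E) \subset E'$ is open. On each fibre $E'_s$ over $s \in \Spec R$ the image is an open subgroup containing the identity, and by the connectedness hypothesis on the fibres of $E'$ it must coincide with $E'_s$. For (c) $\Rightarrow$ (d) I would pass to a fibre over $\Spec R$ and assume $R = k$ is a field. By the translation isomorphism \eqref{EqIsomKernel} every non-empty fibre of $f$ is a $\ker f$-torsor and hence zero-dimensional; surjectivity of $f$ between smooth schemes then forces $d = d'$ by the fibre-dimension formula.

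The main step is (d) $\Rightarrow$ (a). For flatness of $f$ I would apply the fibrewise flatness criterion \cite[IV$_3$, Th\'eor\`eme~11.3.10]{EGA}, which reduces to showing $f_k\colon E_k \to E'_k$ is flat for each residue field $k$ of $\Spec R$. On such a fibre $E_k$ and $E'_k$ are smooth of the same dimension $d = d'$; the fibres of $f_k$ are either empty or zero-dimensional by \eqref{EqIsomKernel} and the finiteness of $\ker f$; the subgroup $f_k(E_k)$ has dimension $d'$ and hence equals the connected $E'_k$, so $f_k$ is surjective. Miracle flatness for a surjection between regular schemes of the same dimension with equidimensional fibres of dimension $0$ now yields flatness of $f_k$. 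Combined with surjectivity this makes $f$ faithfully flat. For finiteness, I would re-use \eqref{EqIsomKernel} in the form
\[
E \times_{f, E', f} E \;\cong\; E \times_R \ker f,
\]
which identifies the base change of $f$ along itself with the first projection $E \times_R \ker f \to E$, a finite morphism because $\ker f$ is finite over $R$. As finiteness is local on the base for the fpqc topology (e.g.\ \cite[\S\,6.1]{BLR}) and $f$ is a faithfully flat quasi-compact cover, $f$ itself is finite.

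Finally, over $R = k$ a field the implication (d) $\Rightarrow$ (e) is contained in what was just shown, and (e) $\Rightarrow$ (d) is immediate: if $f$ is surjective with $d = d'$, then $\ker f$ is a closed subgroup scheme of finite type over $k$ and the fibre-dimension formula gives $\dim \ker f = 0$, so $\ker f$ is finite. The chief obstacle is (d) $\Rightarrow$ (a), where fibrewise miracle flatness must be assembled via the fibrewise flatness criterion and the finiteness of $f$ must be obtained by fpqc descent along $f$ itself; all other implications are routine consequences of the translation isomorphism \eqref{EqIsomKernel} together with the openness of flat morphisms.
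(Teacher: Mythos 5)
Your proof is correct and follows essentially the same route as the paper: openness of flat morphisms plus connectedness of the fibres of $E'$ for (b)$\Rightarrow$(c), the translation isomorphism \eqref{EqIsomKernel} to identify all non-empty fibres of $f$ with $\ker f$ for the dimension counts, miracle flatness on fibres combined with the fibrewise flatness criterion \cite[IV$_3$, Th\'eor\`eme~11.3.10]{EGA} for the flatness in (d), and descent of finiteness along the faithfully flat base change $E\to E'$ using $E\times_{E'}E\cong E\times_R\ker f$. The only cosmetic difference is that the paper phrases miracle flatness via \cite[Theorem~18.16]{Eisenbud} at the level of local rings of the fibres rather than geometrically.
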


\begin{proof}
We show that \ref{PropIsogAModule_A} implies all other conditions. This is obvious for \ref{PropIsogAModule_B}, \ref{PropIsogAModule_C} and \ref{PropIsogAModule_F}. To prove that $d=d'$ let $\Fm\subset R$ be a maximal ideal and consider the base change to $k=R/\Fm$. Then $f\times\id_k\colon E\times_Rk\to E'\times_Rk$ is a finite surjective morphism, and hence $d=\dim E\times_Rk=\dim E'\times_Rk=d'$; see \cite[Corollary~9.3]{Eisenbud}.

Conversely, clearly \ref{PropIsogAModule_F}$\Longrightarrow$\ref{PropIsogAModule_C}. We now show \ref{PropIsogAModule_E}$\Longrightarrow$\ref{PropIsogAModule_C} and \ref{PropIsogAModule_B}$\Longrightarrow$\ref{PropIsogAModule_C}$\Longrightarrow$\ref{PropIsogAModule_D}$\Longrightarrow$\ref{PropIsogAModule_B}$\Longrightarrow$\ref{PropIsogAModule_A}. Generally note that by the remarks after \eqref{EqIsomKernel} all non-empty fibers of $f$ are isomorphic to $\ker f$.

First assume \ref{PropIsogAModule_E} and note that when $R=k$ is a field, the ring $\Gamma(E',\CO_{E'})$ is an integral domain by our assumptions on $E'$. The surjectivity of $f$ implies that $f^*\colon\Gamma(E',\CO_{E'})\into\Gamma(E,\CO_E)$ is injective of relative transcendence degree $d-d'=0$. Since all fibers of $f$ are isomorphic to $\ker f$, \cite[Corollary~14.6]{Eisenbud} implies that $\ker f$ is finite over $\Spec k$ and \ref{PropIsogAModule_C} holds. 

We next show for general $R$ that \ref{PropIsogAModule_B} implies \ref{PropIsogAModule_C}. Namely, $f$ is of finite presentation by \cite[IV$_1$, Proposition~1.6.2(v)]{EGA}, because $E$ and $E'$ are of finite presentation over $R$. Therefore \ref{PropIsogAModule_B} implies that $f$ is universally open by \cite[IV$_2$, Th\'eor\`eme~2.4.6]{EGA}. In particular $(f\times\id_k)(E\times_Rk)\subset E'\times_Rk$ is open for every point $\Spec k\to\Spec R$ of $\Spec R$. Since $E'\times_Rk$ was assumed to be connected, it possesses no proper open subgroup, and hence $f\times\id_k$ is surjective. This establishes \ref{PropIsogAModule_C}.

To prove that \ref{PropIsogAModule_C} implies \ref{PropIsogAModule_D} again consider the morphism $f\times\id_k\colon E\times_Rk\to E'\times_Rk$ over a point $\Spec k\to\Spec R$ of $\Spec R$. Since $f\times\id_k$ is surjective, $f^*\otimes\id_k\colon\Gamma(E',\CO_{E'})\otimes_Rk\,\into\,\Gamma(E,\CO_E)\otimes_Rk$ is injective, because otherwise its kernel would define a proper closed subscheme of $E'\times_Rk$ through which $f\times\id_k$ factors. Since all fibers of $f$ are isomorphic to $\ker f$, and hence finite, \cite[Corollary~13.5]{Eisenbud} shows that $d'\,=\,\dim\Gamma(E',\CO_{E'})\otimes_Rk\,=\,\dim\Gamma(E,\CO_E)\otimes_Rk\,=\,d$.

We prove the implication \ref{PropIsogAModule_D}$\Longrightarrow$\ref{PropIsogAModule_B}. Consider the fiber $f\times\id_k\colon E\times_Rk\to E'\times_Rk$ over a point $\Spec k\to\Spec R$ of $\Spec R$ and the inclusion $\bigl(\Gamma(E',\CO_{E'})\otimes_Rk\bigr)/\ker(f^*\otimes\id_k)\,\longinto\,\Gamma(E,\CO_E)\otimes_Rk$. Since all fibers of $f$ are finite, \cite[Corollary~13.5]{Eisenbud} implies $\dim\Gamma(E',\CO_{E'})\otimes_Rk\,=\,d'\,=\,d\,=\,\dim\Gamma(E,\CO_E)\otimes_Rk\,=\,\dim\bigl(\Gamma(E',\CO_{E'})\otimes_Rk\bigr)/\ker(f^*\otimes\id_k)$. It follows that $\ker(f^*\otimes\id_k)=(0)$ and $f^*\otimes\id_k\colon\Gamma(E',\CO_{E'})\otimes_Rk\,\into\,\Gamma(E,\CO_E)\otimes_Rk$ is injective. Let $\Fm\subset\Gamma(E,\CO_E)\otimes_Rk$ be a maximal ideal. Then $(f^*\otimes\id_k)^{-1}(\Fm)\subset\Gamma(E',\CO_{E'})\otimes_Rk$ is a maximal ideal by \cite[Theorem~4.19]{Eisenbud}. Since the fiber of $f$ over $\Fm$ is finite, \cite[Theorem~18.16(b)]{Eisenbud} implies that $f\otimes\id_k$ is flat at $\Fm$. Since $E$ and $E'$ are smooth over $R$ it follows from \cite[IV$_3$, Th\'eor\`eme~11.3.10]{EGA} that $f$ is flat.

Finally we show that \ref{PropIsogAModule_B} and \ref{PropIsogAModule_C} together imply \ref{PropIsogAModule_A}. By \ref{PropIsogAModule_B} and \ref{PropIsogAModule_C} the morphism $f\colon E\to E'$ is faithfully flat. Whether $f$ is finite can by \cite[IV$_2$, Proposition~2.7.1]{EGA} be tested after the faithfully flat base change $E\to E'$. By \eqref{EqIsomKernel} the finiteness of the projection $E\times_{E'}E\to E$ onto the first factor follows from the finiteness of $\ker f$ over $\Spec R$. This proves \ref{PropIsogAModule_A}.
\end{proof}

\begin{corollary}\label{CorKernel}
Let $f\in\Hom_R(\ulE,\ulE')$ be an isogeny. Then
\begin{enumerate}
\item \label{CorKernel_A}
the kernel $\ker f$ of $f$ is a finite locally free group scheme and a strict $\BF_q$-module scheme over $R$.
\item \label{CorKernel_B}
$E'$ is the quotient $E/\ker f$.
\end{enumerate}
\end{corollary}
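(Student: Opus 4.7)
For part (a), the plan is to first get finiteness and flatness of $\ker f$ and then upgrade to strictness. By Proposition~\ref{PropIsogAModule} the isogeny $f$ is finite, faithfully flat, and satisfies $\dim E=\dim E'=:d$. Since $\ker f=E\times_{E',e'}\Spec R$ is obtained from $f$ by base change along the zero section, it follows immediately that $\ker f$ is finite and faithfully flat over $\Spec R$, hence a finite locally free $R$-group scheme.

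For the strict $\BF_q$-module structure, I would invoke Lemma~\ref{LemmaStrict} to reduce to a faithfully flat extension $R\to R'$ over which both $E\times_RR'\cong\BG_{a,R'}^d$ and $E'\times_RR'\cong\BG_{a,R'}^d$ as $\BF_q$-module schemes; such an $R'$ exists by applying Definition~\ref{DefAndModule}\ref{DefAndModule_A} to $E$ and $E'$ separately and tensoring. By Lemma~\ref{LemmaEndGa}, $f\times_RR'$ is then given by a matrix $F=(f_{ij})\in R'\{\tau\}^{d\times d}$ with $(f\times_RR')^*(y_i)=\sum_j f_{ij}(x_j)$, so
\[
\ker(f\times_RR')\;=\;\Spec R'[x_1,\ldots,x_d]\big/\bigl(\,{\textstyle\sum_j}\,f_{ij}(x_j)\colon i=1,\ldots,d\,\bigr).
\]
This provides a presentation as required in Definition~\ref{DefStrict}, and I would lift the $\BF_q$-action by the naive rule $[a]^\flat(x_i):=a\cdot x_i$ on $R'[x_1,\ldots,x_d]$. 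Since each $f_{ij}=\sum_n b_{ij,n}\tau^n$ is $\BF_q$-linear (using $a^{q^n}=a$ for $a\in\BF_q$), this lift sends $\sum_j f_{ij}(x_j)$ to $a\sum_j f_{ij}(x_j)$, hence preserves the ideal $I\cdot I_{R'[x_1,\ldots,x_d]}$ and descends to $B^\flat$. A direct inspection shows $[a]^\flat$ acts on both terms of the co-Lie complex by scalar multiplication with $a=\gamma(a)$, confirming strictness for $\ker(f\times_RR')$. Lemma~\ref{LemmaStrict} then gives that $\ker f$ itself is a strict $\BF_q$-module scheme over $R$.

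For part (b), the plan is to interpret the isomorphism \eqref{EqIsomKernel} as saying that $E\to E'$ realizes $E'$ as the fpqc-sheaf quotient $E/\ker f$. Under \eqref{EqIsomKernel} the two projections $E\times_{E'}E\rightrightarrows E$ correspond to the projection $(P,K)\mapsto P$ and the group-law map $(P,K)\mapsto P+K$ on $E\times_R\ker f$; these are precisely the structure maps of the equivalence relation ``$P$ and $Q$ differ by an element of $\ker f$''. Since $f$ is faithfully flat and of finite presentation, it is an effective epimorphism in the fpqc topology (Proposition~\ref{PropIsogAModule}\ref{PropIsogAModule_F}), so $E\times_{E'}E\rightrightarrows E\to E'$ is a coequalizer diagram of fpqc sheaves, which realizes $E'$ as the quotient $E/\ker f$.

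The main obstacle I anticipate is the verification of strictness in (a): one needs to check that the naive lift of the $\BF_q$-action passes to $B^\flat$ (which crucially uses that one works modulo $I\cdot I_{R'[x_1,\ldots,x_d]}$ rather than $I$ alone) and induces scalar multiplication on the co-Lie complex. The rest of the argument is formal from Proposition~\ref{PropIsogAModule} and the natural isomorphism~\eqref{EqIsomKernel}.
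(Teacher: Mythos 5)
Your proof is correct and follows the same overall strategy as the paper (finite locally free via flatness and finiteness of $f$; strictness by faithfully flat descent through Lemma~\ref{LemmaStrict}; the quotient via fpqc descent and the isomorphism \eqref{EqIsomKernel}), but you replace the paper's two key citations by explicit arguments. For strictness over $R'$ the paper simply notes that $E\times_RR'$ and $E'\times_RR'$ are strict $\BF_q$-module schemes by Example~\ref{ExampleStrictMorphism} and then quotes Faltings's general result that the kernel of a strict morphism between strict $\BF_q$-module schemes is again strict; you instead write down the presentation $R'[\ulX]/\bigl(\sum_jf_{ij}(x_j)\bigr)$ of the kernel and verify by hand that the naive lift $[a]^\flat(x_i)=\gamma(a)x_i$ preserves $I\cdot I_{R'[\ulX]}$ (using $\BF_q$-linearity of the additive polynomials $f_{ij}$) and acts by the scalar $\gamma(a)$ on both terms of the co-Lie complex. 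For (b) the paper cites SGA\,3, Th\'eor\`eme~V.4.1, whereas you unwind the descent argument: $f$ is an fpqc effective epimorphism, and under \eqref{EqIsomKernel} the two projections $E\times_{E'}E\rightrightarrows E$ become $(P,K)\mapsto P$ and $(P,K)\mapsto P+K$, exhibiting $E'$ as the coequalizer. Both routes are valid; yours is more self-contained at the cost of length, and your explicit computation of the lift is essentially a special case of what the cited results encapsulate.
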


\begin{proof}
\ref{CorKernel_A} Since $f$ is flat of finite presentation by \cite[IV$_1$, Proposition~1.6.2(v)]{EGA}, $\ker f$ is flat of finite presentation over $R$. Since it is also finite, it is finite locally free. Over a faithfully flat $R$-algebra $R'$ both $E$ and $E'$ become isomorphic to powers of $\BG_{a,R'}$ and hence are strict $\BF_q$-module schemes by Example~\ref{ExampleStrictMorphism}. Therefore $(\ker f)\otimes_R R'$ is a strict $\BF_q$-module scheme over $R'$ by \cite[Proposition~2]{Faltings02} and $\ker f$ is a strict $\BF_q$-module scheme over $R$ by Lemma~\ref{LemmaStrict}.

\smallskip\noindent
\ref{CorKernel_B} This follows from \cite[Th\'eor\`eme~V.4.1]{SGA3}.
\end{proof}

\begin{proposition}\label{PropIsogDrinfeldMod}
\begin{enumerate}
\item \label{PropIsogDrinfeldMod_A}
If $\ulE$ and $\ulE'$ are Drinfeld $A$-modules over $R$ with $\Spec R$ connected and $f\in\Hom_k(\ulE,\ulE')$, then $f$ is an isogeny if and only if $f\ne0$.
\item \label{PropIsogDrinfeldMod_B}
If this is the case then $f$ is separable if and only if $\Lie f\in R\mal$.
\end{enumerate}
\end{proposition}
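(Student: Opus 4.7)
\textbf{Plan for Proposition~\ref{PropIsogDrinfeldMod}.}

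For part (a), the direction ``$f$ isogeny $\Rightarrow f\neq 0$'' is immediate because an isogeny is surjective onto $E'$, which has positive relative dimension. The nontrivial converse I would prove in two stages. First, I would reduce to a fiberwise statement: by Corollary~\ref{CorAModuleHomFinGen} (applying Proposition~\ref{PropBaseChangeInj} to abelian Anderson $A$-modules), the connectedness of $\Spec R$ combined with $f\neq 0$ forces $f\otimes\id_{\kappa(s)}\neq 0$ for every $s\in\Spec R$. Then, fixing some $t\in A\setminus\BF_q$ and working over each residue field $\kappa(s)$, I would apply Lemma~\ref{LemmaStandardForm}\ref{LemmaStandardForm_B}: after choosing local trivializations, $\ulE\otimes\kappa(s)$ and $\ulE'\otimes\kappa(s)$ are in standard form with $\bar\phi_t,\bar\phi'_t\in\kappa(s)\{\tau\}$ of common positive degree $r(t)$ and invertible leading coefficients, while $\bar f$ corresponds to a nonzero $\bar F\in\kappa(s)\{\tau\}$ satisfying $\bar F\bar\phi_t=\bar\phi'_t\bar F$; the lemma then forces the leading coefficient of $\bar F$ to lie in $\kappa(s)\mal$. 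Consequently $\bar F(x)$ is a monic $q$-polynomial up to a unit, so $f\otimes\kappa(s)$ is finite and surjective, i.e.\ a fiberwise isogeny.

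The second stage globalizes. By the fiberwise flatness criterion EGA~IV$_3$, Th\'eor\`eme~11.3.10 (invoked already in the proof of Theorem~\ref{ThmDrinfeldMod}), the morphism $f\colon E\to E'$ is flat over $R$. By Proposition~\ref{PropIsogAModule}\ref{PropIsogAModule_B} it then suffices to prove that $\ker f$ is finite over $R$, which I would verify Zariski-locally. On any affine $\Spec R_0$ of a Zariski cover over which both Drinfeld modules are in standard form (as in Definition~\ref{DefDrinfeldMod}\ref{DefDrinfeldMod_A}), $f|_{R_0}$ is represented by some $F\in R_0\{\tau\}$ satisfying $F\phi_t=\phi'_t F$; once the leading coefficient of $F$ is shown to be a unit in $R_0$, the quotient $R_0[x]/(F(x))$ becomes a free $R_0$-module of rank $q^{\deg_\tau F}$, making $\ker f|_{R_0}$ finite. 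I would aim to obtain this unit-leading-coefficient statement by a second application of Lemma~\ref{LemmaStandardForm}\ref{LemmaStandardForm_B}, now to the triple $(F,\phi_t,\phi'_t)$ in $R_0\{\tau\}$.

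For part (b), the plan is to combine the infinitesimal criterion for \'etaleness with the identification of $\Lie f$ in standard form. Since $E$ and $E'$ are smooth over $R$ of common relative dimension one and $f$ is flat by part (a), the isogeny $f$ is \'etale (equivalently, separable in the sense of Definition~\ref{DefIsogAModule}) precisely when the derivative $\Lie f\colon \Lie E\to\Lie E'$ of invertible $R$-modules is an isomorphism. In a standard-form chart $\Spec R_0$, both $\Lie E$ and $\Lie E'$ are canonically identified with $R_0$, and by Lemma~\ref{LemmaEndGa} the map $\Lie f$ corresponds to the constant term $b_0\in R_0$ of $F$. So $\Lie f$ is an isomorphism exactly when $b_0\in R_0\mal$, which is the meaning of $\Lie f\in R\mal$.

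The main obstacle will be the global unit statement for the leading coefficient of $F$ in part (a): Lemma~\ref{LemmaStandardForm}\ref{LemmaStandardForm_B} demands a connected base, which is given for $\Spec R$ but need not descend to the affines $\Spec R_0$ appearing in the Zariski cover trivializing $E$ and $E'$. I would resolve this either by refining to connected components of $\Spec R_0$ (each component embeds into the connected scheme $\Spec R$, and one recovers the unit statement after further affine localization), or more directly by combining the fiberwise isogeny property from the first stage with the top-degree identity $b_Nc_{r(t)}^{q^N}=c'_{r(t)}b_N^{q^{r(t)}}$ extracted from $F\phi_t=\phi'_t F$ and a Nakayama-type argument, using that $c_{r(t)},c'_{r(t)}\in R_0\mal$ and that $b_N$ is nonvanishing at every point of $\Spec R_0$.
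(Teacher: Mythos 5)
Your plan is in essence the paper's own proof: reduce Zariski--locally to standard form, apply Lemma~\ref{LemmaStandardForm}\ref{LemmaStandardForm_B} to the relation $F\Phi_t=\Phi'_tF$ to make the leading coefficient of $F$ a unit, conclude that $\ker f=\Spec R_0[x]/(F(x))$ is finite, and read off separability from the constant term $\Lie f=b_0$ via the Jacobi criterion. Two remarks. First, your fiberwise stage and the flatness step via EGA~IV$_3$, Th\'eor\`eme~11.3.10 are dispensable: once $\ker f$ is finite, Proposition~\ref{PropIsogAModule}\ref{PropIsogAModule_D} applies directly because $d=d'=1$ for Drinfeld modules (this is what the paper does). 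Second, concerning the obstacle you single out: your claim that $b_N$ is nonvanishing at every point of $\Spec R_0$ is not automatic and is in general false on a disconnected chart, since the fiberwise degree of $F$ can differ on different clopen pieces (e.g.\ $F=(1,0)\tau^2+(1,1)\tau$ over $R_0=k\times k$), so the top coefficient may vanish identically on some components. The correct repair is close to what you suggest: the identity $b_Nc_{r(t)}^{q^N}=c'_{r(t)}b_N^{q^{r(t)}}$ gives $(b_N)=(b_N)^q$, and Nakayama produces an idempotent splitting $\Spec R_0$ into a clopen piece on which $b_N$ is a unit and one on which $b_N=0$; descending induction on the degree, terminating by your fiberwise nonvanishing of $f$, yields a finite clopen refinement on each piece of which $F$ has unit leading coefficient, which suffices for finiteness of $\ker f$. (The paper itself glosses over this point, invoking Lemma~\ref{LemmaStandardForm}\ref{LemmaStandardForm_B} after localization without addressing the loss of connectedness, so your explicit attention to it is welcome even though the specific pointwise claim needs the above adjustment.)
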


\begin{proof}
\ref{PropIsogDrinfeldMod_A} Let $f\colon\ulE\to\ulE'$ be an isogeny, then $f\ne0$ because the zero morphism is not surjective. Conversely let $f\ne0$. By Proposition~\ref{PropIsogAModule}\ref{PropIsogAModule_D} we must show that $\ker f$ is finite. This question is local on $\Spec R$, so we may assume that $E=E'=\BG_{a,R}$ and that $\ulE=(E,\phi)$ and $\ulE'=(E',\psi)$ are in standard form. Let $t\in A\setminus\BF_q$, and hence $\deg_\tau\phi_t>0$ and $\deg_\tau\psi_t>0$. By Lemma~\ref{LemmaStandardForm}\ref{LemmaStandardForm_B} applied to $f\circ\phi_t=\psi_t\circ f$ we have $f=\sum_{i=0}^n f_i\tau^i\in R\{\tau\}$ with $f_n\in R\mal$. It follows that $\ker f=\Spec R[x]/(\sum_{i=0}^n f_i x^{q^i})$ which is finite over $R$.

\medskip\noindent
\ref{PropIsogDrinfeldMod_B} By the Jacobi criterion \cite[\S 2.2, Proposition~7]{BLR}, $\ker f=\Spec R[x]/(\sum_{i=0}^n f_i x^{q^i})$ is \'etale if and only if $\Lie f=f_0=\tfrac{\partial f(x)}{\partial x}\in R\mal$.
\end{proof}

\bigskip

Next we turn to $A$-motives.

\begin{definition}\label{DefIsogAMotive}
A morphism $f\in\Hom_R(\ulM,\ulN)$ between $A$-motives over $R$ is an \emph{isogeny} if $f$ is injective and $\coker f$ is finite and locally free as $R$-module. If there exists an isogeny between $\ulM$ and $\ulN$ then they are called \emph{isogenous}. (Being isogenous is an equivalence relation; see Corollary~\ref{CorIsogEquiv} below.) A quasi-morphism $f\in\QHom_R(\ulM,\ulN)$ which is of the form $g\otimes c$ for an isogeny $g\in\Hom_R(\ulM,\ulN)$ and a $c\in Q$ is called a \emph{quasi-isogeny}.

If $f$ is an isogeny and $\ulM$ and $\ulN$ are effective, then the snake lemma yields the following commutative diagram with exact rows and columns
\begin{equation}\label{EqDiagIsogeny}
\xymatrix { & & 0 \ar[d] & 0 \ar[d] & \ker(\tau_{\coker f}) \ar@{^{ (}->}[d] \\
& 0 \ar[r] & \sigma^*M \ar[r]^{\TS\sigma^*f} \ar[d]^{\TS\tau_M} & \sigma^*N \ar[r] \ar[d]^{\TS\tau_{N}} & \sigma^*(\coker f) \ar[r] \ar[d]^{\tau_{\coker f}} & 0\;\, \\
& 0 \ar[r] & M \ar[r]^{\TS f} \ar@{->>}[d] & N \ar[r] \ar@{->>}[d] & \coker f \ar[r] \ar@{->>}[d] & 0\;\, \\
0 \ar[r] & \ker(\tau_{\coker f}) \ar[r] & \coker\tau_M \ar[r] & \coker\tau_{N} \ar[r] & \coker(\tau_{\coker f}) \ar[r] & 0\,.
}
\end{equation}
Namely, by local freeness of $\coker f$ the upper row is again exact and identifies $\sigma^*(\coker f)$ with $\coker(\sigma^*f)$.

An isogeny $f\colon\ulM\to\ulN$ between effective $A$-motives is \emph{separable} if $\tau_{\coker f}\colon\sigma^*(\coker f)\to\coker f$ is an isomorphism. \comment{Need a definition without the assumption of effectivity !}
\end{definition}

\begin{remark}\label{RemIsogBaseCh}
If $f\in\Hom_R(\ulM,\ulN)$ is an isogeny and $S$ is an $R$-algebra, then the base change $f\otimes\id_S\in\Hom_S(\ulM\otimes_RS,\ulN\otimes_RS)$ of $f$ to $S$ is again an isogeny. This follows from the exact sequence $0\longto\ulM\xrightarrow{\es f\;}\ulN\longto\coker f\longto0$ because $\coker f$ is a flat $R$-module.
\end{remark}

\begin{example}\label{ExPhi_a}
For $0\ne a\in A$ the morphism $a\colon\ulM\to\ulM$ is an isogeny with $\coker a=M/aM$. Let $\ulM$ be effective. Then $a$ is separable if and only if $\ker(\tau_{\coker a})=\coker(\tau_{\coker a})=(0)$. That is, if and only if multiplication with $a$ is an automorphism of $\coker\tau_M$. Since $a-\gamma(a)$ is nilpotent on $\coker\tau_M$ this is the case if and only if $\gamma(a)\in R\mal$. For the corresponding result about abelian Anderson $A$-modules see Corollary~\ref{CorPhi_a}. \comment{Also if $\ulM$ is not effective?}
\end{example}

\begin{proposition}\label{PropDim}
Let $\ulM$ and $\ulN$ be $A$-motives over $R$. If $\ulM$ and $\ulN$ are isogenous then $\rk\ulM=\rk\ulN$, and if, moreover, $\ulM$ and $\ulN$ are effective, then $\rk_R\coker\tau_M=\rk_R\coker\tau_{N}$. Conversely assume $\rk\ulM=\rk\ulN$ and let $f\in\Hom_R(\ulM,\ulN)$ be a morphism such that $\coker f$ is a finitely generated $R$-module. Then $f$ is an isogeny.
\end{proposition}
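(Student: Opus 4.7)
The converse direction (b) is essentially immediate. It is a direct application of Lemma~\ref{LemmaInjCokerLocFree}: given a morphism $f\colon M\to N$ between finite locally free $A_R$-modules of the same rank with $\coker f$ finitely generated over $R$, that lemma yields $f$ injective and $\coker f$ finite locally free over $R$, which is precisely the definition of an isogeny.

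For the forward direction (a), my plan for the equality $\rk\ulM=\rk\ulN$ is to pick $t\in A\setminus\BF_q$ and invoke Lemma~\ref{LemmaF_q[t]} to view $M$ and $N$ as finite locally free $R[t]$-modules of ranks $m\cdot\rk\ulM$ and $m\cdot\rk\ulN$ respectively, where $m$ is the rank of $A$ over $\BF_q[t]$. Since $\coker f$ is finite locally free over $R$, Cayley-Hamilton produces a monic polynomial $g\in R[t]$ annihilating it. The localization $M[\tfrac{1}{g}]\isoto N[\tfrac{1}{g}]$ is then an isomorphism of finite locally free modules over $R[t][\tfrac{1}{g}]$, forcing equal $R[t]$-ranks; this can be read off on each connected component of $\Spec R$ because $\Var(g)\subset\Spec R[t]$ is finite over $\Spec R$ and hence $\Spec R[t][\tfrac{1}{g}]$ is dense in every component of $\Spec R[t]$. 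Dividing by $m$ gives $\rk\ulM=\rk\ulN$.

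For the equality $\rk_R\coker\tau_M=\rk_R\coker\tau_N$ in the effective case, I would reduce to $R=k$ a field, since both ranks are locally constant on $\Spec R$ and the isogeny property is preserved by base change (Remark~\ref{RemIsogBaseCh}). Over $k$, the bottom row $0\to\ker\tau_{\coker f}\to\coker\tau_M\to\coker\tau_N\to\coker\tau_{\coker f}\to 0$ of diagram~\eqref{EqDiagIsogeny} is an exact sequence of finite-dimensional $k$-vector spaces. Additivity of $\dim_k$, together with the observation that $\tau_{\coker f}\colon\sigma^*\coker f\to\coker f$ is a $k$-linear map between spaces of equal $k$-dimension (so that $\dim_k\ker\tau_{\coker f}=\dim_k\coker\tau_{\coker f}$), yields the desired equality.

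The main obstacle is the rank equality in (a): $\coker f$ is a priori only an $R$-module, so its ``smallness'' must be translated into an $A_R$-module rank statement, and threading through the finite free extension $R[t]\subset A_R$ provided by Lemma~\ref{LemmaF_q[t]} is what bridges this gap.
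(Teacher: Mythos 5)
Your proof is correct, and its overall structure (converse via Lemma~\ref{LemmaInjCokerLocFree}; the $\coker\tau$ equality via base change to residue fields and the alternating-sum count in the right column and bottom row of diagram~\eqref{EqDiagIsogeny}) coincides with the paper's. The one place you diverge is the equality $\rk\ulM=\rk\ulN$: the paper simply base changes to $k=R/\Fm$, notes that $f\otimes\id_k$ is still an isogeny with $A_k$-torsion cokernel, and concludes equality of ranks of locally free modules over the Dedekind domain $A_k$; you instead stay over $R$, restrict scalars to $R[t]$ via Lemma~\ref{LemmaF_q[t]}, produce a monic annihilator $g$ of $\coker f$ by Cayley--Hamilton, and compare ranks on the dense open complement of $\Var(g)$. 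Both arguments are valid (your localization $M[\tfrac{1}{g}]\to N[\tfrac{1}{g}]$ is indeed an isomorphism, being injective by flatness of localization and surjective because $g$ kills the cokernel, and since the ranks of $M$ and $N$ over $A_R$ are constant by definition it suffices that $\Spec R[t]\setminus\Var(g)$ be nonempty over each point of $\Spec R$). The paper's route is shorter; yours re-runs the mechanism already used in the proof of Lemma~\ref{LemmaInjCokerLocFree} and avoids invoking the structure of torsion modules over $A_k$, so either is acceptable.
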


\begin{proof}
Let $f\colon\ulM\to\ulN$ be an isogeny. Since $M$, respectively $\coker\tau_M$, are finite locally free over $A_R$, respectively over $R$, we can compute their ranks by choosing a maximal ideal $\Fm\subset R$ and applying the base change from $R$ to $k=R/\Fm$. Then $f\otimes\id_k$ is still an isogeny by Remark~\ref{RemIsogBaseCh}. Since $\coker(f\otimes\id_k)$ is a torsion $A_k$-module it follows that
\[
\rk\ulM\,=\,\rk_{A_R}M\,=\,\rk_{A_k}(M\otimes_R k)\,=\,\rk_{A_k}(N\otimes_R k)\,=\,\rk_{A_R}N\,=\,\rk\ulN\,.
\]
If $\ulM$ and $\ulN$ are effective, we consider diagram~\eqref{EqDiagIsogeny} for the isogeny $f\otimes\id_k$. Since $\coker(f\otimes\id_k)$ and $\sigma^*\coker(f\otimes\id_k)$ are finite dimensional $k$-vector spaces of the same dimension, the right vertical column and the bottom row of diagram~\eqref{EqDiagIsogeny} imply that 
\[
\rk_R\coker\tau_M\,=\,\dim_k\coker(\tau_M\otimes\id_k)\,=\,\dim_k\coker(\tau_{N}\otimes\id_k)\,=\,\rk_R\coker\tau_{N}\,.
\]
The converse follows from Lemma~\ref{LemmaInjCokerLocFree}.
\end{proof}

After these preparations we are now able to formulate and prove our main theorem.

\begin{theorem}\label{ThmIsogeny}
Let $f\in\Hom_R(\ulE,\ulE')$ be a morphism between abelian Anderson $A$-modules and let $\ulM(f)\in\Hom_R(\ulM',\ulM)$ be the associated morphism between the associated effective $A$-motives $\ulM=\ulM(\ulE)$ and $\ulM'=\ulM(\ulE')$. Then
\begin{enumerate}
\item \label{ThmIsogeny_A}
$f$ is an isogeny if and only if $\ulM(f)$ is an isogeny.
\item \label{ThmIsogeny_B}
$f$ is a separable isogeny if and only if $\ulM(f)$ is a separable isogeny.
\item \label{ThmIsogeny_C}
If $f$ is an isogeny there are canonical $A$-equivariant isomorphisms of finite $\BF_q$-shtukas
\[
\bigl(\coker\ulM(f),\tau_{\coker\ulM(f)}\bigr) \; \isoto \; \ulM_q(\ker f)
\]
and of finite locally free $R$-group schemes
\[
\Dr_q\bigl(\coker\ulM(f)\bigr) \; \isoto \; \ker f\,.
\]
\end{enumerate}
\end{theorem}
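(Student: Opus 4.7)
My plan is to establish (c) first, deduce the forward direction of (a) from it, then handle the converse of (a) and finally (b).

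For (c) and the forward direction of (a), suppose $f$ is an isogeny, so that $G := \ker f$ is a finite locally free strict $\BF_q$-module scheme with $E' = E/G$ by Corollary~\ref{CorKernel}. Restriction of $\BF_q$-linear group homomorphisms $E \to \BG_{a,R}$ along $G \hookrightarrow E$ gives an $A_R$-linear, $\tau$-equivariant map $\rho\colon M \to \ulM_q(G)$. Since $m'\circ f$ vanishes on $G$ for any $m'\in M'$, $\rho$ factors through a canonical map $\bar\rho\colon\coker\ulM(f)\to\ulM_q(G)$. I would show $\bar\rho$ is an isomorphism by reducing to the split case via a faithfully flat base change $R\to R'$ that simultaneously trivializes both $E$ and $E'$ to $\BG_{a,R'}^d$ (using $\dim\ulE=\dim\ulE'$, which holds by Proposition~\ref{PropIsogAModule}\ref{PropIsogAModule_D}). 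The formations of both $\coker\ulM(f)$ and $\ulM_q(G)$ commute with flat base change---the former by right exactness of tensor product, the latter by the argument of Remark~\ref{RemMotiveOfAModule} extended to any finite locally free affine $\BF_q$-group scheme.

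In the split case, $\ulM(f)$ becomes right multiplication by the matrix $F\in R'\{\tau\}^{d\times d}$ representing $f$ via Lemma~\ref{LemmaEndGa}, yielding $\coker\ulM(f)\otimes_R R' = R'\{\tau\}^{1\times d}/R'\{\tau\}^{1\times d}\!\cdot\!F$. For the other side I would use the $\BF_q^\times$-weight grading on $R'[x_1,\ldots,x_d]$: the defining ideal $I=(\sum_j F_{ij}(x_j)\colon i)$ of $G_{R'}\subset\BG_{a,R'}^d$ is generated by weight-$1$ primitives, hence homogeneous, and a direct computation identifies its primitive $\BF_q$-linear part with $R'\{\tau\}^{1\times d}\!\cdot\!F$. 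This matches $\bar\rho\otimes\id_{R'}$ with the identity under the natural presentations of both sides, so by faithfully flat descent $\bar\rho$ is an isomorphism. Since $\ulM_q(G)$ is finite locally free over $R$ by Theorem~\ref{ThmEqAModSch}, so is $\coker\ulM(f)$, making $\ulM(f)$ an isogeny; applying the quasi-inverse $\Dr_q$ to $\bar\rho$ and using Theorem~\ref{ThmEqAModSch}\ref{ThmEqAModSch_E} yields the second isomorphism of (c).

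For the converse in (a), if $\ulM(f)$ is an isogeny then $\rk\ulM=\rk\ulM'$ and $\dim\ulM=\dim\ulM'$ by Proposition~\ref{PropDim}, so $\dim E=\dim E'$. In the split case over a trivializing $R\to R'$, $f$ is again represented by a matrix $F$, and finite local freeness of $R'\{\tau\}^{1\times d}/R'\{\tau\}^{1\times d}\!\cdot\!F$ over $R'$ forces the leading coefficients of $F$ to be units in an appropriate sense (along the lines of Lemma~\ref{LemmaStandardForm}); hence $\ker f_{R'}$ is finite over $R'$, and by descent $\ker f$ is finite over $R$, so $f$ is an isogeny by Proposition~\ref{PropIsogAModule}\ref{PropIsogAModule_D}. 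Part (b) follows by combining (c) with Theorem~\ref{ThmEqAModSch}\ref{ThmEqAModSch_B}: $\ulM(f)$ is separable iff $\tau_{\coker\ulM(f)}$ is an isomorphism iff $\ulM_q(\ker f)$ is étale iff $\ker f$ is étale over $R$ iff $f$ is separable. The main obstacle I expect is the split-case identification of the primitive $\BF_q$-linear part of $I$ with $R'\{\tau\}^{1\times d}\!\cdot\!F$: heuristically this amounts to composition of $\BF_q$-linear polynomials corresponding to multiplication in $R'\{\tau\}$, but rigorously ruling out extra primitive $\BF_q$-linear elements in $I$ arising from non-primitive coefficient combinations requires careful bookkeeping with the $\BF_q^\times$-grading and mod-$p$ binomial coefficients (as in Lucas's theorem used in the proof of Lemma~\ref{LemmaEndGa}).
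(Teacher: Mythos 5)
Your architecture matches the paper's: the comparison map $\coker\ulM(f)\to\ulM_q(\ker f)$, reduction to the split case by faithfully flat base change, the converse via finiteness of the cokernel, and \ref{ThmIsogeny_B} from \ref{ThmIsogeny_C} together with Theorem~\ref{ThmEqAModSch}\ref{ThmEqAModSch_B}. Two repairs are needed in the forward direction. First, you never verify that $\ulM(f)$ is injective, which is part of the definition of an isogeny of $A$-motives; you cannot get it from Proposition~\ref{PropDim}, since equality of the ranks of $\ulM$ and $\ulM'$ is only known a posteriori (it is Corollary~\ref{CorRkIsog}, a consequence of this theorem). The fix is cheap: $f$ is an epimorphism of \fpqc-sheaves by Proposition~\ref{PropIsogAModule}\ref{PropIsogAModule_F}, so $m'\mapsto m'\circ f$ is injective by left exactness of $\Hom_{R\text{\rm-groups},\BF_q\text{\rm-lin}}(\fdot,\BG_{a,R})$. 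Second, the ``primitive $\BF_q$-linear part'' computation you flag as the main obstacle is harder than necessary in two respects. For injectivity of $\bar\rho$ (i.e.\ that an $\BF_q$-linear polynomial lying in the ideal $I$ is a left $R'\{\tau\}$-combination of the $f^*(y_i)$) one can sidestep all bookkeeping: since $E'=E/\ker f$ by Corollary~\ref{CorKernel}\ref{CorKernel_B}, any $m\in M$ with $m\circ\iota=0$ (where $\iota\colon\ker f\into E$) factors as $m'\circ f$ by the universal property of the quotient. For surjectivity, a direct identification over $R'$ requires controlling $\Gamma(\ker f,\CO_{\ker f})$ as an $R'$-module; since the target $\ulM_q(\ker f)$ is already known to be a finite $R$-module, Nakayama reduces surjectivity to the fibers over residue fields $k$, where $\Gamma(\ker f\times_Rk,\CO)$ has an explicit monomial basis $\tilde z_1^{\ell_1}\cdots\tilde z_r^{\ell_r}$ in which the primitive weight-one elements are visibly the $k$-span of the $\tilde z_i$. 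Adopting these two reductions makes the Lucas-type analysis unnecessary.

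In the converse of \ref{ThmIsogeny_A}, the claim that finite local freeness of $R'\{\tau\}^{1\times d}/R'\{\tau\}^{1\times d}F$ ``forces the leading coefficients of $F$ to be units in an appropriate sense'' does not survive $d>1$: a matrix $F\in R'\{\tau\}^{d\times d}$ with finite cokernel need not have any invertible top coefficient (already $\left(\begin{smallmatrix}1&\tau\\0&1\end{smallmatrix}\right)$ is an isomorphism), and Lemma~\ref{LemmaStandardForm} is genuinely one-dimensional. The mechanism that works is: $\coker\ulM(f)\otimes_RR'$ is finite projective, so the projection from $\bigoplus_i\bigoplus_nR'\!\cdot\!\tau^nm_i$ admits a section whose image lies in degrees $n\le N$; hence $\tau^{N+1}m_i-\sum_{j}\sum_{n\le N}b_{i,j,n}\tau^nm_j$ lies in $\im\ulM(f)$, and applying this to $\xi$ exhibits each $x_i$ as integral over $f^*\Gamma(E',\CO_{E'})\otimes_RR'$. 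Thus $f\times\id_{R'}$ is a finite morphism, and one concludes by faithfully flat descent and Proposition~\ref{PropIsogAModule}\ref{PropIsogAModule_D}. With these corrections your proof goes through and is essentially the paper's.
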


\begin{proof}
In the beginning we do neither assume that $f$ nor that $\ulM(f)$ is an isogeny. We denote by $\iota$ the inclusion $\ker f\into E$. Consider the $A_R$-homomorphism $\ulM(\ulE)\to\ulM_q(\ker f),\,m\mapsto m\circ\iota$, which is compatible with the Frobenii $\tau_{M(\ulE)}$ and $F_{M_q(\ker f)}$. Since $m=\ulM(f)(m')=m'\circ f$ implies $m'\circ f\circ\iota=0$, it factors over
\begin{equation}\label{EqFactors}
\coker\ulM(f)\;\longto\;\ulM_q(\ker f),\quad m\mod\im\ulM(f)\mapsto m\circ\iota\,.
\end{equation}
On the other hand we claim that there are $A$-equivariant morphisms
\begin{equation}\label{EqClosedIm}
\Dr_q\bigl(\ulM_q(\ker f)\bigr)\longto\Dr_q(\coker\ulM(f)) \longinto \ker f\longinto E\,.
\end{equation}
where the last two are closed immersions. The first morphism is obtained from \eqref{EqFactors}. Moreover, the epimorphism $\ulM(\ulE)\onto\coker\ulM(f)$ induces by Example~\ref{ExVOfAModule} an $A$-equivariant closed immersion $\alpha\colon\Dr_q(\coker\ulM(f)) \into \Dr_q\bigl(\ulM(\ulE)\bigr)=\ulE$. We compose it with $f\colon E\to E'$ and show that the composition factors through the zero section $e'\colon\Spec R\to E'$. This will imply that $\alpha$ factors through $\ker f$. We can study this composition after a faithfully flat base change $R\to R'$ over which we have an $\BF_q$-linear isomorphism $\beta\colon E'\otimes_RR'\cong\BG_{a,R'}^{d'}=\Spec R'[y_1,\ldots,y_d]$. Let $m'_i:=pr_i\circ\beta\in M(\ulE')\otimes_RR'$ where $pr_i\colon\BG_{a,R'}^d\to\BG_{a,R'}=\Spec R[\xi]$ is the projection onto the $i$-th factor. Then $pr_i^*(\xi)=y_i$ and $\alpha^*f^*\beta^*(y_i)=\alpha^*f^*m'_i{}^*(\xi)=\alpha^*\circ\ulM(f)(m'_i)^*(\xi)=0$ because $\ulM(f)(m'_i)=0$ in $\coker\ulM(f)$.

\medskip\noindent
\ref{ThmIsogeny_A} Now assume that $f$ is an isogeny. Then $\ker f$ is a finite locally free group scheme over $R$, and a strict $\BF_q$-module scheme by Corollary~\ref{CorKernel}\ref{CorKernel_A}. So $\ulM_q(\ker f)$ is a finite locally free $R$-module by Theorem~\ref{ThmEqAModSch} and the morphism $\Dr_q\bigl(\ulM_q(\ker f)\bigr)\to \ker f$ in \eqref{EqClosedIm} is an isomorphism. This shows that $\Dr_q(\coker\ulM(f))\isoto\ker f$. We next show that the map \eqref{EqFactors} is an isomorphism. Its cokernel is a finite $R$-module because $\ulM_q(\ker f)$ is. We apply again a faithfully flat base change $R\to R'$ such that $E\otimes_RR'\cong\BG_{a,R'}^d$ and $E'\otimes_RR'\cong\BG_{a,R'}^{d'}$. Then $f$ is given by a matrix $F\in R'\{\tau\}^{d'\times d}$ by Lemma~\ref{LemmaEndGa}. By faithfully flat descent and by Nakayama's lemma~\cite[Corollaries~2.9 and 4.8]{Eisenbud} the map \eqref{EqFactors} will be surjective if for all maximal ideals $\Fm'\subset R'$ its tensor product with $k:=R'/\Fm'$ is surjective. By Remark~\ref{RemMotiveOfAModule} and its analog for $\ulM_q(\ker f)$ the tensor product of \eqref{EqFactors} with $k$ equals $\coker\ulM(f\times\id_k)\to\ulM_q\bigl(\ker (f\times\id_k)\bigr)$, where $f\times\id_k\colon\ulE\times_Rk\to\ulE'\times_Rk$ is given by the matrix $\olF:=F\otimes1_k$. In particular $\ker(f\times\id_k)=\Spec k[x_1,\ldots,x_d]/(f^*(y_\ell)\colon 1\le \ell\le d)$. Since $\ker f$ is finite, $k[x_1,\ldots,x_d]/(f^*(y_\ell)\colon 1\le \ell\le d)$ is a finite dimensional $k$-vector space. For fixed $i$ this implies that $\{x_i,x_i^q,x_i^{q^2},\ldots\}$ is linearly dependent and there is a positive integer $N$ and $b_{i,n}\in k$ such that $x_i^{q^{N+1}}=\sum\limits_{n=0}^N b_{i,n}\!\cdot\!x_i^{q^n}$ in $k[x_1,\ldots,x_d]/(f^*(y_\ell)\colon 1\le \ell\le d)$. We introduce the new variables $z_{i,n}:=x_i^{q^n}$ for $1\le i\le d$ and $0\le n\le N$. Then $f^*(y_\ell)$ is a $k$-linear relation between the $z_{i,n}$. Furthermore
\begin{eqnarray*}
& k[x_1,\ldots,x_d]/(f^*(y_\ell)\colon 1\le \ell\le d)\;\cong\; k[z_{i,n}\colon 1\le i\le d,\,\le n\le N]/I \quad\text{with}\\[1mm]
& I\;=\;\bigl(f^*(y_i),\,\TS z_{i,N}^q-\sum\limits_{n=0}^N b_{i,n}\!\cdot\!z_{i,n},\,z_{i,n}^q-z_{i,n+1}\colon 1\le i\le d,0\le n<N\bigr)\,.
\end{eqnarray*}
Let $\tilde z_1,\ldots,\tilde z_r$ be a $k$-basis of $(\bigoplus\limits_{i=1}^d\bigoplus\limits_{n=0}^Nk\!\cdot\!z_{i,n})/(f^*(y_\ell)\colon 1\le \ell\le d)$. Then there are elements $c_{ij}\in k$ for $1\le i,j\le r$ such that 
\[
k[x_1,\ldots,x_d]/(f^*(y_\ell)\colon 1\le \ell\le d)\;\cong\;k[\tilde z_1,\ldots,\tilde z_r]\big/\bigl(\tilde z_i^q-\TS\sum\limits_{j=1}^r c_{ij}\tilde z_j\colon 1\le i\le r\bigr)\;=:\;B\,.
\]
Moreover, the group law on $\ker f$ is given by the comultiplication $\Delta^*\colon B\to B\otimes_kB,\,\Delta^*(\wt z_i)=\wt z_i\otimes 1+1\otimes\wt z_i$ and the $\BF_q$-action is given by $\phi_\lambda\colon B\to B,\,\phi_\lambda^*(\wt z_i)=\gamma(\lambda)\!\cdot\!\wt z_i$.

We now are ready to compute $\ulM_q\bigl(\ker(f\times\id_k)\bigr)$ from \eqref{EqM(E)}. If $\BG_{a,k}=\Spec k[\xi]$ then every element $\wt m\in \ulM_q\bigl(\ker(f\times\id_k)\bigr)$ satisfies $\wt m^*(\xi)=\sum\limits_{\ell_i\in\{0\ldots q-1\}}d_{\ell_1,\ldots,\ell_r}\cdot\tilde z_1^{\ell_1}\cdot\ldots\cdot\tilde z_r^{\ell_r}$ with $d_{\ell_1,\ldots,\ell_r}\in k$. Since the $\tilde z_1^{\ell_1}\cdot\ldots\cdot\tilde z_r^{\ell_r}$ form a $k$-basis of $B$, the conditions $\Delta^*\wt m^*(\xi)=\wt m^*(\xi)\otimes 1+ 1\otimes\wt m^*(\xi)$ in $B\otimes_kB$ and $\phi_\lambda^*\wt m^*(\xi)=m^*(\gamma(\lambda)\!\cdot\!\xi)=\gamma(\lambda)\!\cdot\!\wt m^*(\xi)$ in $B$ for $\lambda\in\BF_q$ imply as in Lemma~\ref{LemmaEndGa} that $\wt m^*(\xi)=d_{1,0\ldots0}\!\cdot\!\tilde z_1+\ldots+d_{0\ldots0,1}\!\cdot\!\tilde z_r$. Since $\tilde z_i$ is a $k$-linear combination of the $z_{j,n}=x_j^{q^n}$ the morphism $m\colon E\times_Rk\to\BG_{a,k}$ with $m^*(\xi)=d_{1,0\ldots0}\!\cdot\!\tilde z_1+\ldots+d_{0\ldots0,1}\!\cdot\!\tilde z_r$ belongs to $\ulM(E\times_Rk)$ and maps to $\wt m$ under the map $\coker\ulM(f\times\id_k)\to\ulM_q\bigl(\ker (f\times\id_k)\bigr)$. This proves that \eqref{EqFactors} is surjective.

In order to show that \eqref{EqFactors} is injective let $m\in M(\ulE)$ be an element with $m\circ\iota=0$. By \cite[Th\'eor\`eme~V.4.1]{SGA3} the morphism $m\colon E\to\BG_{a,R}$ factors through $E/\ker f\isoto E'$ (use Corollary~\ref{CorKernel}\ref{CorKernel_B}) in the form $m=m'\circ f$ for an $m'\in M(\ulE')$. This shows that $m\mod\im\ulM(f)=0$ in $\coker\ulM(f)$. All together we have proved that $\coker\ulM(f)\isoto\ulM_q(\ker f)$ is a finite locally free $R$-module. Moreover, $\ulM(f)$ is injective, because if $m'\in M(\ulE')$ satisfies $m'\circ f=\ulM(f)(m')=0$ the surjectivity of $f$ implies $m'=0$. More precisely, $f$ is an epimorphism of sheaves for the \fpqc-topology by Proposition~\ref{PropIsogAModule}\ref{PropIsogAModule_F}. Now the injectivity of $\ulM(f)$ follows from the left exactness of the functor $\Hom_{R\text{-groups},\BF_q\text{\rm-lin}}(\fdot,\BG_{a,R})$. This proves that $\ulM(f)$ is an isogeny, and it also proves \ref{ThmIsogeny_C}.

\medskip\noindent
Conversely assume that $\ulM(f)$ is an isogeny. Then $d:=\dim\ulE=\dim\ulE'$ by Theorem~\ref{ThmMotiveOfAModule} and Proposition~\ref{PropDim}. We prove that $\ker f$ is finite. For this purpose we apply a faithfully flat base change $R\to R'$ such that $E\otimes_RR'\cong\BG_{a,R'}^d=\Spec R'[x_1,\ldots,x_d]$ and $E'\otimes_RR'\cong\BG_{a,R'}^d=\Spec R[y_1,\ldots,y_d]$. Also when we write $\BG_{a,R'}=\Spec R'[\xi]$ then $\ulM(\ulE\times_RR')\cong\bigoplus\limits_{i=1}^d R'\{\tau\}\!\cdot\!m_i$ and $\ulM(\ulE'\times_RR')\cong\bigoplus\limits_{i=1}^d R'\{\tau\}\!\cdot\!m'_i$ where $m_i^*(\xi)=x_i$ and $m'_i{}^*(\xi)=y_i$. Consider the epimorphism of $R$-modules
\[
\xymatrix {
\bigoplus\limits_{i=1}^d\bigoplus\limits_{0\le n}R'\!\cdot\!\tau^n m_i\;\cong\;\ulM(\ulE\times_R R') \ar@{->>}[r]^{\TS\qquad\quad\es\delta} & \coker\ulM(f\otimes\id_{R'})\,.
}
\]
Since $\coker\ulM(f\otimes\id_{R'})$ is finite locally free over $R'$, and hence projective, this epimorphism has a section $s$ whose image lies in $\bigoplus\limits_{i=1}^d\bigoplus\limits_{n=0}^N R'\!\cdot\!\tau^n m_i$ for some $N$. It follows that $\tau^{N+1}m_i-s(\delta(\tau^{N+1}m_i))$ maps to zero in $\coker\ulM(f\otimes\id_{R'})$. That is, there are elements $b_{i,j,n}\in R'$ and $\wt m'_i\in\ulM(\ulE'\times_RR')$ with $\tau^{N+1}m_i\,-\,\sum\limits_{j=1}^d\sum\limits_{n=0}^Nb_{i,j,n}\!\cdot\!\tau^nm_j\,=\,\ulM(f)(\wt m'_i)$. Applying this equation to $\xi$ yields
\[
x_i^{q^{N+1}}-\;\sum_{j=1}^d\sum_{n=0}^Nb_{i,j,n}\!\cdot\!x_j^{q^n}\;=\;f^*\wt m'_i{}^*(\xi)\;\in\;f^*R'[y_1,\ldots,y_d]\;\cong\;f^*\Gamma(E',\CO_{E'})\otimes_R R'\,.
\]
Thus $f\times\id_{R'}\colon E\times_RR'\to E'\times_RR'$ is finite. By faithfully flat descent \cite[IV$_{\rm 2}$, Proposition~2.7.1]{EGA} also $f$ is finite. By Proposition~\ref{PropIsogAModule}\ref{PropIsogAModule_D} this proves that $f$ is an isogeny and establishes \ref{ThmIsogeny_A}.

\medskip\noindent
Finally \ref{ThmIsogeny_B} follows from \ref{ThmIsogeny_C} and Theorem~\ref{ThmEqAModSch}\ref{ThmEqAModSch_B}.
\end{proof}

\begin{corollary}\label{CorRkIsog}
If $\ulE$ and $\ulE'$ are isogenous abelian Anderson $A$-modules then $\rk\ulE=\rk\ulE'$.
\end{corollary}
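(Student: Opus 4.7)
The plan is to reduce the statement for abelian Anderson $A$-modules to the analogous statement for $A$-motives, which has already been established. Given an isogeny $f\colon\ulE\to\ulE'$, Theorem~\ref{ThmIsogeny}\ref{ThmIsogeny_A} yields that the associated morphism $\ulM(f)\colon\ulM(\ulE')\to\ulM(\ulE)$ between the associated effective $A$-motives is an isogeny of $A$-motives.

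Now Proposition~\ref{PropDim} tells us that isogenous $A$-motives have equal rank, so $\rk\ulM(\ulE')=\rk\ulM(\ulE)$. Finally, Theorem~\ref{ThmMotiveOfAModule} asserts that the contravariant functor $\ulE\mapsto\ulM(\ulE)$ preserves the rank, i.e., $\rk\ulM(\ulE)=\rk\ulE$ and $\rk\ulM(\ulE')=\rk\ulE'$. Combining these two equalities gives $\rk\ulE=\rk\ulE'$, as desired.

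There is no real obstacle here; the corollary is a direct concatenation of three already-proven facts (Theorem~\ref{ThmIsogeny}\ref{ThmIsogeny_A}, Proposition~\ref{PropDim}, and the rank-preservation part of Theorem~\ref{ThmMotiveOfAModule}), which is presumably why it is stated as a corollary immediately after Theorem~\ref{ThmIsogeny}.
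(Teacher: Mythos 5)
Your proof is correct and is exactly the argument the paper gives: the paper's proof of this corollary simply cites Theorems~\ref{ThmMotiveOfAModule} and \ref{ThmIsogeny} together with Proposition~\ref{PropDim}, which is precisely the chain of three facts you spell out.
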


\begin{proof}
This follows directly from Theorems~\ref{ThmMotiveOfAModule}, \ref{ThmIsogeny} and Proposition~\ref{PropDim}.
\end{proof}

\begin{corollary}\label{CorPhi_a}
Let $\ulE$ be an abelian Anderson $A$-module over $R$ and let $a\in A$. Then $\phi_a\colon\ulE\to\ulE$ is an isogeny. It is separable if and only if $\gamma(a)\in R\mal$.
\end{corollary}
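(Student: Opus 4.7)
The plan is to reduce everything to the $A$-motive side via the fully faithful functor $\ulE \mapsto \ulM(\ulE)$ of Theorem~\ref{ThmMotiveOfAModule} and then invoke Example~\ref{ExPhi_a}. (We tacitly assume $a\ne 0$; otherwise $\phi_a=0$ and the statement is vacuous unless one deems the zero map an isogeny, which fails for $\ulE$ of positive dimension.)

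First I would identify the morphism of $A$-motives attached to $\phi_a$. Unwinding Definition~\ref{DefMotiveOfAModule}, the map $\ulM(\phi_a)\colon \ulM(\ulE)\to\ulM(\ulE)$ sends $m\in M(\ulE)=\Hom_{R\text{-groups},\BF_q\text{-lin}}(E,\BG_{a,R})$ to $m\circ\phi_a$. But by the very definition of the $A$-module structure on $M(\ulE)$ in Definition~\ref{DefAndModule}\ref{DefAndModule_C}, $m\circ\phi_a = a\cdot m$. Hence $\ulM(\phi_a)$ is precisely the multiplication-by-$a$ endomorphism of $\ulM(\ulE)$.

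Next, by Example~\ref{ExPhi_a}, multiplication by $a\ne 0$ is an isogeny of $\ulM(\ulE)$ with cokernel $M/aM$, and it is separable if and only if $\gamma(a)\in R\mal$ (because $a-\gamma(a)$ is nilpotent on the finite locally free $R$-module $\coker\tau_M$, so multiplication by $a$ is an automorphism of $\coker\tau_M$ exactly when $\gamma(a)$ is a unit). Applying Theorem~\ref{ThmIsogeny}\ref{ThmIsogeny_A} then shows that $\phi_a$ itself is an isogeny of $\ulE$, and applying Theorem~\ref{ThmIsogeny}\ref{ThmIsogeny_B} translates separability of $\phi_a$ into separability of $\ulM(\phi_a)$, yielding the criterion $\gamma(a)\in R\mal$.

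There is no real obstacle here; the only point worth double-checking is the identification of $\ulM(\phi_a)$ with multiplication by $a$, which is immediate from the definitions. Everything else is a direct citation of results already established in the text.
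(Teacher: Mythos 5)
Your proposal is correct and follows exactly the paper's own route: the proof in the text simply cites Theorem~\ref{ThmIsogeny} and Example~\ref{ExPhi_a}, with the identification $\ulM(\phi_a)=a\cdot\id_{\ulM(\ulE)}$ being immediate from the $A$-module structure in Definition~\ref{DefAndModule}\ref{DefAndModule_C}, just as you say. Your parenthetical caveat that $a$ should be nonzero is a fair observation (the paper's Example~\ref{ExPhi_a} indeed assumes $0\ne a$), but otherwise there is nothing to add.
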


\begin{proof}
The assertion follows from Theorem~\ref{ThmIsogeny} and Example~\ref{ExPhi_a}. The criterion for separability can also be proved without reference to $A$-motives; see our proof of Theorem~\ref{ThmATorsion}\ref{ThmATorsion_B} below.
\end{proof}

We next come to our second main result.

\begin{theorem}\label{ThmDualIsog}
Let $\ulM$ and $\ulN$ be two $A$-motives over $R$ and let $f\in\Hom_R(\ulM,\ulN)$ be a morphism. Then the following are equivalent:
\begin{enumerate}
\item \label{ThmDualIsog_A}
$f$ is an isogeny,
\item \label{ThmDualIsog_B}
there is an element $0\ne a\in A$ such that $f$ induces an isomorphism of $A_R[\tfrac{1}{a}]$-modules $M[\tfrac{1}{a}]\isoto N[\tfrac{1}{a}]$.
\end{enumerate}
In particular, a quasi-morphism $f\in\QHom_R(\ulM,\ulN)$ is a quasi-isogeny if and only if it induces an isomorphism $f\colon M[\tfrac{1}{a}]\isoto N[\tfrac{1}{a}]$ for an element $a\in A\setminus\{0\}$.
\end{theorem}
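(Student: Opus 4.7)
The plan is to handle the two implications separately; the final quasi-morphism statement will follow formally.

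For $(b) \Rightarrow (a)$ I would first observe that $a \otimes 1 \in A_R$ is a non-zero-divisor for any $0 \neq a \in A$, since $A$ is a domain and tensoring with $R$ over $\BF_q$ preserves $\BF_q$-linear independence. Consequently $a \otimes 1$ acts injectively on the locally free $A_R$-module $M$, so from $(\ker f)[\tfrac{1}{a}] = 0$ one deduces $\ker f = 0$ and $f$ is injective. Next, since $A$ is Dedekind and $a \neq 0$, the quotient $A/(a^k)$ is a finite-dimensional $\BF_q$-algebra, hence $A_R/(a^k) = A/(a^k) \otimes_{\BF_q} R$ is finitely generated over $R$. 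As $\coker f$ is finitely generated over $A_R$ and killed by some $a^k$ (from $\coker f[\tfrac{1}{a}] = 0$), it is finitely generated over $R$. Finally, since $a \otimes 1$ is a non-zero-divisor the open $\Spec A_R[\tfrac{1}{a}]$ meets every component of $\Spec A_R$, so $M$ and $N$ have the same $A_R$-rank, and Lemma~\ref{LemmaInjCokerLocFree} upgrades $\coker f$ to a finite locally free $R$-module.

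For $(a) \Rightarrow (b)$, since any isogeny is injective it suffices to exhibit $0 \neq a \in A$ such that $\coker f$ is annihilated by a power of $a$. Using Proposition~\ref{PropNoetherian} I would reduce to the case where $R$ is a noetherian $A$-algebra; by multiplying $\tau_M$ and $\tau_N$ by a common power of $\CJ$ (which preserves the isogeny $f$) and invoking Proposition~\ref{PropAMotiveEffective}, I would further reduce to the effective case. Then diagram \eqref{EqDiagIsogeny} equips $\coker f$ with an induced $A_R$-linear map $\tau_{\coker f}: \sigma^*(\coker f) \to \coker f$ whose kernel and cokernel are subquotients of $\coker\tau_M$ and $\coker\tau_N$, hence killed by $\CJ^d$.

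The main computation fixes $t \in A \setminus \BF_q$ (Lemma~\ref{LemmaF_q[t]}) and analyses the $R$-linear action of $t$ on the finite locally free $R$-module $\coker f$ via its matrix $T$ and characteristic polynomial $p_T(X) \in R[X]$; by Cayley--Hamilton, $p_T(t) \cdot \coker f = 0$. To show $p_T \in \BF_q[X]$, so that $p_T(t) \in A$ provides the desired element, I would exploit the $A_R$-linearity of $\tau_{\coker f}$: writing $S$ for its matrix, the identity $S T^\sigma = T S$ gives $S(XI - T^\sigma) = (XI - T)S$ and, on taking determinants,
\[
\det(S) \cdot \bigl(p_T^\sigma(X) - p_T(X)\bigr) = 0 \quad \text{in } R[X].
\]
If $\det(S)$ were a non-zero-divisor the coefficients of $p_T$ would be $\sigma$-fixed. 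The main obstacle is that $\det(S)$ can be a zero-divisor precisely when $\ker\tau_{\coker f} \neq 0$, which is however controlled by \eqref{EqDiagIsogeny} since $\ker\tau_{\coker f}$ injects into the $\CJ^d$-torsion module $\coker\tau_M$. I would handle this by iterating: form $\tau^{(n)}: \sigma^{n*}(\coker f) \to \coker f$ with matrix $S\,\sigma(S)\cdots\sigma^{n-1}(S)$ and show that for sufficiently large $n$ its determinant becomes a non-zero-divisor, forcing $p_T \in R^{\Frob_q}[X]$; a Frobenius-norm argument then descends the coefficients to $\BF_q$. An alternative route goes through Theorem~\ref{ThmIsogeny}: $\coker f$ corresponds to the finite locally free strict $\BF_q$-module scheme $\ker f_E$, and the constraint $(\Lie\phi_a - \gamma(a))^d = 0$ forces the $A$-action on $\ker f_E$ to factor through a nonzero quotient $A/I$, producing the required $a \in I \setminus \{0\}$.

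The final assertion is a formal consequence: every $f \in \QHom_R(\ulM,\ulN)$ has the form $g \otimes c^{-1}$ with $g \in \Hom_R(\ulM,\ulN)$ and $0 \neq c \in A$, and $f$ is a quasi-isogeny iff $g$ is an isogeny; by the main equivalence this is iff $g[\tfrac{1}{b}]\colon M[\tfrac{1}{b}] \isoto N[\tfrac{1}{b}]$ for some $0 \neq b \in A$, which is in turn equivalent to $f[\tfrac{1}{bc}]$ being an isomorphism.
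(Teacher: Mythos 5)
Your direction \ref{ThmDualIsog_B}$\Rightarrow$\ref{ThmDualIsog_A} and the final statement about quasi-isogenies are correct and essentially identical to the paper's (which routes the first through Proposition~\ref{PropDim}, i.e.\ Lemma~\ref{LemmaInjCokerLocFree}). For \ref{ThmDualIsog_A}$\Rightarrow$\ref{ThmDualIsog_B} you take a genuinely different route: the paper forms the determinant line $(\wedge^{\tilde r}M)\dual\otimes\wedge^{\tilde r}N$, extends it to $\BP^1_R$, uses the triviality of $\Pic^0_{\BP^1}$ and a finite \'etale extension of $R$ to extract a $\sigma$-invariant element, whereas you work with the characteristic polynomial $p_T$ of $t$ on $\coker f$ and Cayley--Hamilton. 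The idea is sound, but your execution has a genuine gap at the decisive step.

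The step ``form $\tau^{(n)}$ with matrix $S\,\sigma(S)\cdots\sigma^{n-1}(S)$ and show that for $n\gg0$ its determinant becomes a non-zero-divisor'' cannot work: the determinant of that product is $\det(S)\cdot\sigma(\det S)\cdots\sigma^{n-1}(\det S)$, and if $\det(S)\,x=0$ with $x\ne0$ then the product still annihilates $x$; a zero divisor remains one under iteration. And $\det(S)$ really is a zero divisor in typical cases, e.g.\ for $f=a\cdot\id_\ulM$ with $\gamma(a)\notin R\mal$ (Example~\ref{ExPhi_a}). Moreover the target $p_T\in R^{\sigma}[X]$ is too strong over non-reduced $R$: one can only expect the coefficients of $\sigma(p_T)-p_T$ to be nilpotent, so a $q$-power of $p_T$ must be taken before its coefficients lie in $\BF_q$ (this is exactly why the paper's steps 2--3 work with $q^n$-th powers throughout). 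The correct completion of your strategy is to base-change diagram~\eqref{EqDiagIsogeny} to each residue field $\kappa(\Fp)$ (legitimate since $\coker f$ is $R$-flat): there the four-term sequence $0\to\ker\tau_{\coker f}\to\sigma^*(\coker f)\to\coker f\to\coker(\tau_{\coker f})\to0$ consists of finite-dimensional vector spaces whose outer terms have equal dimension and are killed by a power of $t-\gamma(t)$, so multiplicativity of characteristic polynomials gives $p_T\cdot(X-\gamma(t))^{m}=\sigma(p_T)\cdot(X-\gamma(t))^{m}$, whence $\sigma(p_T)=p_T$ over $\kappa(\Fp)$. Thus the coefficients of $\sigma(p_T)-p_T$ lie in every prime of $R$, hence are nilpotent, and for $m\gg0$ the element $a:=p_T(t)^{q^m}$ has coefficients in $R^\sigma=\BF_q$ (Lemma~\ref{LemmaTauInvInR}, after passing to connected components), is nonzero since $p_T$ is monic, and annihilates $\coker f$ by Cayley--Hamilton. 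Finally, your ``alternative route'' via Theorem~\ref{ThmIsogeny} is not available for general $A$-motives (they need not come from abelian Anderson $A$-modules), and the assertion that the $A$-action on $\ker f$ factors through a nonzero quotient $A/I$ is essentially the statement to be proved, not an input.
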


\begin{proof}
\ref{ThmDualIsog_B}$\Longrightarrow$\ref{ThmDualIsog_A}
Clearly $\rk\ulM=\rk\ulN$. Since $\coker f$ is a finitely generated $A_R$-module, $(\coker f)\otimes_A A[\tfrac{1}{a}]=(0)$ implies that $a^n\cdot\coker f=(0)$ for some positive integer $n$. Therefore, $\coker f$ is a finitely generated module over $A_R/(a^n)=A/(a^n)\otimes_{\BF_q}R$, whence over $R$. So \ref{ThmDualIsog_A} follows from Proposition~\ref{PropDim}.

\medskip\noindent
\ref{ThmDualIsog_A}$\Longrightarrow$\ref{ThmDualIsog_B} 
If $R$ is a field this was proved in \cite[Corollary~5.4]{BH1} and also follows from \cite[Proposition~3.4.5]{Papanikolas} and \cite[Proposition~3.1.2]{TaelmanArtin}. We generalize the proof to the relative situation.

\medskip\noindent
1. If $f$ is an isogeny, then $\coker f$ is a finite locally free $R$-module, which we may assume to be free after passing to an open affine covering of $\Spec R$. Let $t\in A\setminus\BF_q$ and consider the finite flat homomorphism $\wt A:=\BF_q[t]\into A$ from Lemma~\ref{LemmaF_q[t]}, under which we view $\ulM$ and $\ulN$ as $\wt A$-motives by restriction of scalars. That is, we view $M$ and $N$ as locally free $R[t]$-modules of rank $\tilde r=\rk\ulM\cdot\rk_{\wt A}A$ and $\tau_M$ and $\tau_N$ as $R[t][\tfrac{1}{t-\gamma(t)}]$-isomorphisms. By multiplying both $\tau_M$ and $\tau_N$ with $(t-\gamma(t))^e$ for $e\gg0$ we may assume that $\ulM$ and $\ulN$ are effective $\wt A$-motives without changing the isogeny $f\colon\ulM\to\ulN$. Let $\Fa=\ann_{R[t]}(\coker f)=\ker\bigl(R[t]\to \End_R(\coker f)\bigr)$ be the annihilator of $\coker f$. By the Cayley-Hamilton theorem \cite[Theorem~4.3]{Eisenbud} (applied with $I=R$), the monic characteristic polynomial $\chi_t$ of the endomorphism $t$ of $\coker f$ lies in $\Fa$. This shows that $R[t]/\Fa$ is a quotient of the finite $R$-module $R[t]/(\chi_t)$. In particular the closed subscheme $V:=\Spec R[t]/\Fa$ of $\BA^1_R=\Spec R[t]$ is finite over $\Spec R$. On its open complement $f\colon M\to N$ is an isomorphism.

We now consider the exterior powers $\wedge^{\tilde r}M$ and $\wedge^{\tilde r}N$ of the $R[t]$-modules $M$ and $N$ and set $\CL:=(\wedge^{\tilde r}M)\dual\otimes\wedge^{\tilde r}N$. These are invertible $R[t]$-modules. The isogeny $f$ induces a global section $\wedge^{\tilde r}f$ of the invertible sheaf $\CL$ on $\BA^1_R$ which provides an isomorphism $\CO_{\BA^1_R}\isoto\CL$, $1\mapsto\wedge^{\tilde r}f$ on $\BA^1_R\setminus V$. Likewise we obtain global sections $\wedge^{\tilde r}\sigma^*f$, resp.\ $\wedge^{\tilde r}\tau_M$, resp.\ $\wedge^{\tilde r}\tau_N$ of the invertible sheaves $\sigma^*\CL$, resp.\ $(\wedge^{\tilde r}\sigma^*M)\dual\otimes\wedge^{\tilde r}M$, resp.\ $(\wedge^{\tilde r}\sigma^*N)\dual\otimes\wedge^{\tilde r}N$ by the effectivity assumption on $\ulM$ and $\ulN$. Diagram~\eqref{EqDiagIsogeny} implies that there is an equality of global sections 
\begin{equation}\label{EqOfGlobalSect}
\wedge^{\tilde r}f\otimes\wedge^{\tilde r}\tau_M\;=\;\wedge^{\tilde r}\tau_N\otimes\wedge^{\tilde r}\sigma^*f
\end{equation}
of $(\wedge^{\tilde r}\sigma^*M)\dual\otimes\wedge^{\tilde r}N\;=\;\CL\otimes(\wedge^{\tilde r}\sigma^*M)\dual\otimes\wedge^{\tilde r}M\bigr)\;=\;\bigl((\wedge^{\tilde r}\sigma^*N)\dual\otimes\wedge^{\tilde r}N\bigr)\otimes\sigma^*\CL$. 

Since $V$ is proper over $\Spec R$ and the projective line $\BP^1_{R}$ is separated, the map $V\into \BA^1_{R}\into\BP^1_{R}$ is a closed immersion which does not meet $\{\infty\}\times_{\BF_q}\Spec R$, where $\{\infty\}=\BP^1_{\BF_q}\setminus\BA^1_{\BF_q}$. Thus we may glue $\CL$ with the trivial sheaf $\CO_{\BP^1_{R}\setminus V}$ on $\BP^1_{R}\setminus V$ along the isomorphism $\CO_{\BP^1_{R}}\isoto\CL$, $1\mapsto \wedge^{\tilde r}f$ over $\BA^1_{R}\setminus V$. In this way we obtain an invertible sheaf $\olCL$ on the projective line $\BP^1_{R}$. By replacing $\olCL$ with $\olCL\otimes\CO_{\BP^1_{R}}(m\cdot\infty)$ for a suitable integer $m$ we may achieve that $\olCL$ has degree zero (see \cite[\S\,9.1, Proposition~2]{BLR}) and induces an $R$-valued point of the relative Picard functor $\Pic^0_{\BP^1/\BF_q}$; cf.~\cite[\S\,8.1]{BLR}. Since $\Pic^0_{\BP^1/\BF_q}$ is trivial, \cite[\S\,8.1, Proposition~4]{BLR} shows that $\olCL\cong\CK\otimes_{R}\CO_{\BP^1_{R}}$ for an invertible sheaf $\CK$ on $\Spec R$. Replacing $\Spec R$ by an open affine covering which trivializes $\CK$ we may assume that there is an isomorphism $\alpha\colon\CL\isoto R[t]$ of $R[t]$-modules. Let $h:=\alpha(\wedge^{\tilde r}f)\in R[t]$.

\medskip\noindent
2. Let $d:=\rk_R\coker\tau_M$. We claim that locally on $\Spec R$ there is a positive integer $n_0$ and for every integer $n\ge n_0$ an isomorphism of $R[t]$-modules
\begin{equation}\label{EqThmDualIsog1}
\bigl((\wedge^{\tilde r}\sigma^*M)\dual\otimes_{R[t]}\wedge^{\tilde r}M\bigr)^{\otimes q^n}\;\isoto\;R[t]\quad\text{with}\quad(\wedge^{\tilde r}\tau_M)^{\otimes q^n}\;\longmapsto\;\bigl(t-\gamma(t)\bigr)^{q^nd}
\end{equation}
and similarly for $\ulN$. To prove the claim %note that $t-\gamma(t)$ is a non-zero divisor in $R[t]$. Therefore the localization maps $R[t]\to R[t][\tfrac{1}{t-\gamma(t)}]$ and $\wedge^{\tilde r}M\to \wedge^{\tilde r}M\otimes_{R[t]}R[t][\tfrac{1}{t-\gamma(t)}]$ are injective. Since $\tau_M$ and hence also $\wedge^{\tilde r}\tau_M$ are isomorphisms after inverting $t-\gamma(t)$ we conclude that $\wedge^{\tilde r}\tau_M\colon\wedge^{\tilde r}\sigma^*M\to\wedge^{\tilde r}M$ is injective and its cokernel is annihilated by a power of $t-\gamma(t)$. As in Remark~\ref{RemCokerLocFree} it follows that the $R$-module $\coker\wedge^{\tilde r}\tau_M$ is finitely presented, and then Lemma~\ref{LemmaCokerLocFree} shows that it is locally free. 
we apply Proposition~\ref{PropAMotiveEffective}\ref{PropAMotiveEffective_C} to the $A$-motive $\wedge^{\tilde r}\ulM$ and derive that $\wedge^{\tilde r}\tau_M\colon\wedge^{\tilde r}\sigma^*M\to\wedge^{\tilde r}M$ is injective $\coker\wedge^{\tilde r}\tau_M$ is a finite locally free $R$-module, annihilated by a power of $t-\gamma(t)$. 
Consider the exact sequence
\begin{equation}\label{EqSeqCokerWedge}
0 \longto \wedge^{\tilde r}\sigma^*M\otimes_{R[t]} (\wedge^{\tilde r}M)\dual \xrightarrow{\es\wedge^{\tilde r}\tau_M\otimes\id_{(\wedge^{\tilde r}M)\dual}\;} R[t] \longto \coker\wedge^{\tilde r}\tau_M\otimes_{R[t]} (\wedge^{\tilde r}M)\dual \longto 0\,.
\end{equation}
Choose an open affine covering of $\Spec R[t]$ which trivializes the locally free $R[t]$-module $\wedge^{\tilde r}M$. Pulling back this covering under the section $\Spec R\isoto\Spec R[t]/(t-\gamma(t))\into\Spec R[t]$ gives an open affine covering of $\Spec R$ on which we may find an isomorphism $\coker\wedge^{\tilde r}\tau_M\otimes_{R[t]} (\wedge^{\tilde r}M)\dual\isoto\coker\wedge^{\tilde r}\tau_M$. We replace $\Spec R$ by this open affine covering and even shrink it further in such a way that $\coker\wedge^{\tilde r}\tau_M$ becomes a free $R$-module. By \cite[Proposition~4.1(b)]{Eisenbud} the sequence~\eqref{EqSeqCokerWedge} is then isomorphic to the sequence
\begin{equation}\label{EqSeqCokerWedge2}
\xymatrix @C+1pc{
0 \ar[r] & R[t] \ar[r]^{g} & R[t] \ar[r] & \coker\wedge^{\tilde r}\tau_M \ar[r] & 0\,,
}
\end{equation}
where $g\in R[t]$ is a monic polynomial of degree equal to $\rk_R(\coker\wedge^{\tilde r}\tau_M)$. We now tensor sequence~\eqref{EqSeqCokerWedge2} over $R$ with $k:=\Quot(R/\Fp)$ where $\Fp\subset R$ is a prime ideal. It remains exact because $\coker\wedge^{\tilde r}\tau_M$ is free. Since $k[t]$ is a principal ideal domain the elementary divisor theorem applied to 
\[
\xymatrix @C+2pc{
0 \ar[r] & \sigma^*M\otimes_Rk \ar[r]^{\tau_M\otimes\id_k} & M\otimes_Rk \ar[r] & \coker\tau_M\otimes_Rk \ar[r] & 0
}
\]
allows to write $\tau_M\otimes\id_k$ as a diagonal matrix. This shows that $\coker\wedge^{\tilde r}\tau_M\otimes_Rk$ is a $k$-vector space of dimension equal to $\rk_R(\coker\tau_M)=:d$. Since $t-\gamma(t)$ is nilpotent on this vector space, the Cayley-Hamilton theorem from linear algebra implies $g\mod\Fp=(t-\gamma(t))^{d}$. In particular the coefficients of the difference $g':=g-(t-\gamma(t))^{d}$ lie in every prime ideal of $R$, and hence are nilpotent by \cite[Corollary~2.12]{Eisenbud}. Therefore there is a positive integer $n_0$ with $(g')^{q^{n_0}}=0$, whence $g^{q^n}=(t-\gamma(t))^{q^nd}$ for every $n\ge n_0$. The $q^n$-th tensor power of the isomorphism between (the left entries in) the sequences~\eqref{EqSeqCokerWedge} and \eqref{EqSeqCokerWedge2} provides the isomorphism in \eqref{EqThmDualIsog1}. This proves the claim.

\medskip\noindent
3. Since $d=\rk_R\coker\tau_M=\rk_R\coker\tau_N$ by Proposition~\ref{PropDim}, equations~\eqref{EqOfGlobalSect} and \eqref{EqThmDualIsog1} imply that for $n\gg0$ there is an isomorphism $\beta\colon\sigma^*\CL^{\otimes q^n}\isoto\CL^{\otimes q^n}$ of $R[t]$-modules sending $(t-\gamma(t))^{q^n}(\sigma^*\wedge^{\tilde r}f)^{\otimes q^n}$ to $(t-\gamma(t))^{q^n}(\wedge^{\tilde r}f)^{\otimes q^n}$ and hence $(\sigma^*\wedge^{\tilde r}f)^{\otimes q^n}$ to $(\wedge^{\tilde r}f)^{\otimes q^n}$ because $t-\gamma(t)$ is a non-zero divisor. In particular the isomorphism 
\[
\alpha^{\otimes q^n}\circ\beta\circ(\sigma^*\alpha^{\otimes q^n})^{-1}\colon R[t]\isoto\sigma^*\CL^{\otimes q^n}\isoto\CL^{\otimes q^n}\isoto R[t]\,,
\]
which is given by multiplication with a unit $u\in R[t]\mal$, sends $\sigma(h^{q^n})=\sigma^*\alpha^{\otimes q^n}(\wedge^{\tilde r}\sigma^*f)^{\otimes q^n}$ to $h^{q^n}=\alpha^{\otimes q^n}(\wedge^{\tilde r}f)^{\otimes q^n}$. We thus obtain the equation $h^{q^n}=u\cdot\sigma(h^{q^n})$ in $R[t]$.

By Lemma~\ref{LemmaUnits} below, $u=\sum_{i\ge0}u_it^i$ with $u_0\in R\mal$ and $u_i\in R$ nilpotent for all $i\ge1$. Let $R'=R[v_0]/(v_0^{q-1}u_0-1)$ be the finite \'etale $R$-algebra obtained by adjoining a $(q-1)$-th root $v_0$ of $u_0^{-1}$. Then there is a unit $v=\sum_{i\ge1}v_it^i\in R'[t]\mal$ with $v=u\cdot\sigma(v)$. Indeed the latter amounts to the equations 
\[
\TS v_i\;=\;\sum\limits_{j=0}^i u_j v_{i-j}^q \qquad \text{and}\qquad \tfrac{v_i}{v_0}\;=\;(\tfrac{v_i}{v_0})^q+\sum\limits_{j\ge1}\tfrac{u_j}{u_0}\,(\tfrac{v_{i-j}}{v_0})^q
\]
which have the solutions $\tfrac{v_i}{v_0}=\sum_{n\ge0}\bigl(\sum_{j\ge1}\tfrac{u_j}{u_0}\,(\tfrac{v_{i-j}}{v_0})^q\bigr)^{q^n}$ because the $u_j$ are nilpotent. Therefore the element $v^{-1}h^{q^n}\in R'[t]$ satisfies $\sigma(v^{-1}h^{q^n})=v^{-1}h^{q^n}$. Working on each connected component of $\Spec R'$ separately, Lemma~\ref{LemmaTauInvInR} below shows that $a:=v^{-1}h^{q^n}\in\BF_q[t]\subset A$.

In the ring $R'[t][\tfrac{1}{a}]$ the element $h$ becomes a unit. Therefore the map $\alpha^{-1}\circ h\colon R'[t][\tfrac{1}{a}]\to\CL[\tfrac{1}{a}]$, $1\mapsto \wedge^{\tilde r}f$ is an isomorphism. This implies that $\wedge^{\tilde r}f\colon\wedge^{\tilde r}M[\tfrac{1}{a}]\to\wedge^{\tilde r}N[\tfrac{1}{a}]$ is an isomorphism, and hence also $f\colon M[\tfrac{1}{a}]\to N[\tfrac{1}{a}]$ by Cramer's rule (e.g.~\cite[III.8.6, Formulas (21) and (22)]{BourbakiAlgebra}). Thus we have established \ref{ThmDualIsog_B} \'etale locally on $\Spec R$. Replacing $a$ by the product of all the finitely many elements $a$ obtained locally, establishes \ref{ThmDualIsog_B} globally on $\Spec R$.

\medskip\noindent
4. To prove the statement about quasi-morphisms $f\in\QHom_R(\ulM,\ulN)$ assume first, that $f$ induces an isomorphism $f\colon M[\tfrac{1}{a}]\isoto N[\tfrac{1}{a}]$ for some $a\in A\setminus\{0\}$. Then $g:=a^n\cdot f\in\Hom_R(\ulM,\ulN)$ for $n\gg0$, because $M$ is finitely generated. In particular $g$ is an isogeny and $f=g\otimes a^{-n}$ is a quasi-isogeny.

Conversely, if $f$ is a quasi-isogeny, that is $f=g\otimes c$ for an isogeny $g\in\Hom_R(\ulM,\ulN)$ and a $c\in Q$, there is an element $a\in A\setminus\{0\}$ such that $g\colon M[\tfrac{1}{a}]\isoto N[\tfrac{1}{a}]$. If $d$ is the denominator of $c$ it follows that $f\colon M[\tfrac{1}{ad}]\isoto N[\tfrac{1}{ad}]$. 
\end{proof}

To finish the proof of Theorem~\ref{ThmDualIsog} we must demonstrate the following two lemmas.

\begin{lemma}\label{LemmaUnits}
An element $u=\sum_{i\ge0}u_it^i\in R[t]$ is a unit in $R[t]$ if and only if $u_0\in R\mal$ and $u_i$ is nilpotent for all $i\ge1$.
\end{lemma}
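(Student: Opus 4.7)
The plan is to prove each direction separately using standard commutative algebra.

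For the sufficient direction, I would write $u=u_0+w$ with $w:=\sum_{i\ge1}u_it^i$. Since each $u_it^i$ is nilpotent in the commutative ring $R[t]$ and the sum is finite, $w$ is a finite sum of nilpotents and hence itself nilpotent (using the binomial theorem applied to a sufficiently high power). Then $u=u_0\bigl(1+u_0^{-1}w\bigr)$, and the second factor is a unit with inverse $\sum_{n\ge0}(-u_0^{-1}w)^n$, which is a finite sum because $u_0^{-1}w$ is nilpotent. Combined with $u_0\in R\mal$, this shows $u\in R[t]\mal$.

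For the necessary direction, suppose $u\cdot v=1$ with $v=\sum_{j\ge0}v_jt^j\in R[t]$. Comparing constant coefficients immediately gives $u_0v_0=1$, so $u_0\in R\mal$. To conclude that $u_i$ is nilpotent for each $i\ge1$, I would argue prime by prime: for an arbitrary prime ideal $\Fp\subset R$, the image of $u$ in $(R/\Fp)[t]$ is still a unit, but since $R/\Fp$ is an integral domain, the units of $(R/\Fp)[t]$ are exactly the nonzero constants from $(R/\Fp)\mal$. Hence $u_i\mod\Fp=0$ for all $i\ge1$, i.e.\ $u_i$ lies in every prime ideal of $R$. Since the nilradical of $R$ coincides with the intersection of all its prime ideals, $u_i$ is nilpotent, e.g.\ by \cite[Corollary~2.12]{Eisenbud} which is already invoked elsewhere in the paper.

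The argument is entirely elementary and I do not foresee any genuine obstacle; the only two facts that must be handled carefully are that a finite sum of nilpotents in a commutative ring is nilpotent, and that the nilradical equals the intersection of all prime ideals. Both are standard and can be cited directly from Eisenbud.
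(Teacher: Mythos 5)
Your proof is correct and follows essentially the same route as the paper's: in the "only if" direction both arguments extract $u_0\in R\mal$ from the constant term, reduce modulo every prime ideal $\Fp\subset R$, use that the units of $D[t]$ for a domain $D$ are just $D\mal$, and invoke \cite[Corollary~2.12]{Eisenbud} to conclude that the higher coefficients are nilpotent. The only (cosmetic) difference is in the easy direction, where the paper exploits characteristic $p$ by raising $u$ to a $q^n$-th power so that Frobenius kills the nilpotent coefficients, whereas you use the characteristic-free inversion of $u_0(1+u_0^{-1}w)$ via the finite geometric series; both are valid.
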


\begin{proof}
If the $u_i$ satisfy the assertion then there is a positive integer $n$ such that $u_i^{q^n}=0$ for all $i\ge1$. Therefore $u^{q^n}=u_0^{q^n}$ is a unit in $R[t]$ and so the same holds for $u$.

Conversely if $u$ is a unit then $u_0$ must be a unit in $R$. By \cite[Corollary~2.12]{Eisenbud} the kernel of the map $R\to\prod_{\Fp\subset R}R/\Fp$ where $\Fp$ runs over all prime ideals of $R$, equals the nil-radical of $R$. Under this map $u$ is sent to a unit in each factor $R/\Fp[t]$. Since $R/\Fp$ is an integral domain, the $u_i$ for $i\ge1$ must be sent to zero in each factor $R/\Fp$. This shows that $u_i$ is nilpotent for $i\ge1$. 
\end{proof}

\begin{lemma}\label{LemmaTauInvInR}
Assume that $R$ contains no idempotents besides $0$ and $1$, that is $\Spec R$ is connected. Then $R^\sigma:=\{x\in R\colon x^q=x\}=\BF_q$.
\end{lemma}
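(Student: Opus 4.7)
The plan is to decompose $R$ as a product of $q$ rings indexed by $\BF_q$ and then use connectedness to isolate one factor. The inclusion $\BF_q \subseteq R^\sigma$ is immediate from $a^q = a$ for $a \in \BF_q$ (Fermat), so everything hinges on the reverse inclusion. Fix $x \in R$ with $x^q = x$; equivalently,
\[
\prod_{a \in \BF_q}(x-a) \;=\; x^q - x \;=\; 0\quad\text{in }R.
\]

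First I would observe that the principal ideals $I_a := (x-a)R$ for $a \in \BF_q$ are pairwise coprime: for $a \ne b$ one has $(x-a) - (x-b) = b - a \in \BF_q^\times \subseteq R^\times$, so $I_a + I_b = R$. For a pairwise coprime finite family of ideals the product equals the intersection, and since the ideals are principal,
\[
\bigcap_{a \in \BF_q} I_a \;=\; \prod_{a \in \BF_q} I_a \;=\; \Bigl(\prod_{a \in \BF_q}(x-a)\Bigr)R \;=\; (x^q - x)\,R \;=\; 0.
\]
The Chinese Remainder Theorem then gives a ring isomorphism
\[
R \;\stackrel{\sim}{\longrightarrow}\; \prod_{a \in \BF_q} R/I_a\,,
\]
and I would extract the corresponding orthogonal idempotents $e_a \in R$ with $e_a^2 = e_a$, $\sum_a e_a = 1$, and $e_a e_b = 0$ for $a \ne b$.

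Finally I would invoke connectedness: since $R$ is non-zero and has no non-trivial idempotents, exactly one $e_{a_0}$ equals $1$ and all other $e_a$ are $0$. Reading this back through the product decomposition, $e_a = 0$ for $a \ne a_0$ forces $R/I_a = 0$ (so $x - a$ is a unit there), while $e_{a_0} = 1$ forces the projection $R \to R/I_{a_0}$ to be an isomorphism, i.e.\ $I_{a_0} = (x - a_0)R = 0$, whence $x = a_0 \in \BF_q$. I do not expect a real obstacle here; the only step requiring a moment of care is the identity $\prod_a I_a = \bigcap_a I_a$, which is legitimate precisely because the $I_a$ are principal and pairwise coprime.
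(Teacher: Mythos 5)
Your proof is correct, but it takes a different route from the paper's. You argue globally: from $x^q-x=\prod_{a\in\BF_q}(x-a)=0$ and the pairwise comaximality of the ideals $(x-a)$ you get, via the Chinese Remainder Theorem, a decomposition $R\cong\prod_{a\in\BF_q}R/(x-a)$, and connectedness forces all factors but one to vanish, the surviving index being the value of $x$. The paper instead argues locally: it fixes a maximal ideal $\Fm$, observes that the image of $x$ in $R/\Fm$ equals some $\alpha\in\BF_q$, checks by hand that $e:=(x-\alpha)^{q-1}$ is an idempotent (using $(x-\alpha)^q=x-\alpha$ in characteristic $p$), and concludes $e=0$ from $e\in\Fm$ and connectedness, whence $x-\alpha=(x-\alpha)\cdot e=0$. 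The two arguments are cousins --- both manufacture idempotents from the relation $x^q=x$ and kill them by connectedness --- but yours produces the full orthogonal system $(e_a)_{a\in\BF_q}$ at once and never chooses a maximal ideal, which is more conceptual (it exhibits $\Spec R$ as a disjoint union of $q$ clopen pieces indexed by $\BF_q$), while the paper's is shorter and identifies the correct constant $\alpha$ directly from a residue field. One small correction to your closing remark: the identity $\prod_a I_a=\bigcap_a I_a$ holds for any finite family of pairwise comaximal ideals and has nothing to do with principality; what principality buys you is the separate equality $\prod_a I_a=\bigl(\prod_a(x-a)\bigr)R$. Both facts are true, so the argument stands.
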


\begin{proof}
Let $\Fm\subset R$ be a maximal ideal and let $\bar x\in R/\Fm$ be the image of $x$. Then $\bar x^q=\bar x$ implies that $\bar x$ is equal to an element $\alpha\in\BF_q\subset R/\Fm$. Now $e:=(x-\alpha)^{q-1}$ satisfies $e^2=(x-\alpha)^{q-2}(x^q-\alpha^q)=(x-\alpha)^{q-1}=e$, that is $e$ is an idempotent. Since $e\in\Fm$ we cannot have $e=1$ and must have $e=0$. Therefore $x-\alpha=(x-\alpha)^q=(x-\alpha)\cdot e=0$ in $R$, that is $x=\alpha\in\BF_q$.
\end{proof}

\begin{corollary}\label{CorDualIsog}
If $f\in\Hom_R(\ulM,\ulN)$ is an isogeny between $A$-motives then there is an element $0\ne a\in A$ and an isogeny $g\in\Hom_R(\ulN,\ulM)$ with $f\circ g=a\cdot\id_\ulN$ and $g\circ f=a\cdot\id_\ulM$. The same is true for abelian Anderson $A$-modules.
\end{corollary}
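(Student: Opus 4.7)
The plan is to deduce the corollary directly from Theorem~\ref{ThmDualIsog} by a hands-on construction of the dual isogeny. First I would apply that theorem to produce some $0\ne a'\in A$ such that $f$ becomes an isomorphism $f\colon M[\tfrac{1}{a'}]\isoto N[\tfrac{1}{a'}]$. Since $\coker f$ is a finitely generated $A_R$-module that vanishes after inverting $a'$, some power $a:=(a')^k$ satisfies $a\cdot\coker f=0$, so $a\cdot N\subset f(M)$. The injectivity of $f$ then lets me define an $A_R$-linear map $g\colon N\to M$ by sending $n\in N$ to the unique $m\in M$ with $f(m)=a\cdot n$. By construction $f\circ g=a\cdot\id_N$, and applying $f$ on the right gives $f\circ g\circ f=a\cdot f=f\circ (a\cdot\id_M)$, which together with the injectivity of $f$ yields $g\circ f=a\cdot\id_M$.

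It remains to check that $g$ intertwines the Frobenii and is itself an isogeny. Since $a\in A$ is fixed by $\sigma$, the identities just proved give $f\circ(g\circ\tau_N)=a\cdot\tau_N$ and $f\circ(\tau_M\circ\sigma^*g)=\tau_N\circ\sigma^*f\circ\sigma^*g=\tau_N\circ\sigma^*(f\circ g)=\tau_N\circ(a\cdot\id_{\sigma^*N})=a\cdot\tau_N$. The injectivity of $f$ now forces $g\circ\tau_N=\tau_M\circ\sigma^*g$, so $g\in\Hom_R(\ulN,\ulM)$. Moreover $g\circ f=a\cdot\id_M$ becomes an isomorphism after inverting $a$, hence so does $g$, and one more application of Theorem~\ref{ThmDualIsog} shows that $g$ is an isogeny.

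For the statement about abelian Anderson $A$-modules, given an isogeny $f\colon\ulE\to\ulE'$, Theorem~\ref{ThmIsogeny}\ref{ThmIsogeny_A} says that $\ulM(f)\colon\ulM(\ulE')\to\ulM(\ulE)$ is an isogeny of effective $A$-motives. Applying the first part to $\ulM(f)$ produces an $a\in A\setminus\{0\}$ and an isogeny $g'\colon\ulM(\ulE)\to\ulM(\ulE')$ with $\ulM(f)\circ g'=a\cdot\id=\ulM(\phi'_a)$ and $g'\circ\ulM(f)=a\cdot\id=\ulM(\phi_a)$. The full faithfulness of the functor $\ulE\mapsto\ulM(\ulE)$ from Theorem~\ref{ThmMotiveOfAModule} then lifts $g'$ uniquely to a morphism $g\in\Hom_R(\ulE',\ulE)$ with $f\circ g=\phi'_a$ and $g\circ f=\phi_a$, and Theorem~\ref{ThmIsogeny}\ref{ThmIsogeny_A} ensures that $g$ is itself an isogeny.

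The only genuine step is the verification that the set-theoretically defined $g$ commutes with the $\tau$'s, and as shown that follows immediately from the injectivity of $f$; everything else is bookkeeping around the isomorphism that Theorem~\ref{ThmDualIsog} already supplies.
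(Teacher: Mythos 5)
Your proposal is correct and follows essentially the same route as the paper: invoke Theorem~\ref{ThmDualIsog} to find $a$ (a power of which annihilates $\coker f$), define $g$ by the universal property of the injection $f$, deduce $g\circ\tau_N=\tau_M\circ\sigma^*g$ from the injectivity of $f$, and conclude that $g$ is an isogeny by applying Theorem~\ref{ThmDualIsog} once more; the Anderson $A$-module case is likewise handled via Theorems~\ref{ThmMotiveOfAModule} and \ref{ThmIsogeny} in both arguments. No gaps.
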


\begin{proof}
Let $a\in A$ be the element from Theorem~\ref{ThmDualIsog}\ref{ThmDualIsog_B}. As in the proof of \ref{ThmDualIsog_B}$\Longrightarrow$\ref{ThmDualIsog_A} of this theorem there is a positive integer $n$ such $a^n\!\cdot\!\coker f=(0)$. Therefore there is a map $g\colon N\to M$ with $g\circ f=a^n\!\cdot\!\id_M$ and $f\circ g=a^n\!\cdot\!\id_N$. This implies that $g$ is injective, because $a^n$ is a non-zero divisor on $N$. From
\[
f\circ g\circ\tau_N\;=\;a^n\!\cdot\!\tau_N\;=\;\tau_N\circ\sigma^*a^n\!\cdot\!\id_N\;=\;\tau_N\circ\sigma^*f\circ\sigma^*g\;=\;f\circ\tau_M\circ g
\]
and the injectivity of $f$ we conclude that $g\circ\tau_N=\tau_M\circ\sigma^*g$ and that $g\in\Hom_R(\ulN,\ulM)$. By construction $g$ induces an isomorphism $N[\tfrac{1}{a}]\isoto M[\tfrac{1}{a}]$ after inverting $a$. So $g$ is an isogeny by Theorem~\ref{ThmDualIsog}. The statement about abelian Anderson $A$-modules follows from Theorems~\ref{ThmMotiveOfAModule} and \ref{ThmIsogeny}.
\end{proof}

\begin{corollary}\label{CorIsogEquiv}
The relation of being isogenous is an equivalence relation for $A$-motives and for abelian Anderson $A$-modules.
\end{corollary}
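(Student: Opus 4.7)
The plan is to verify reflexivity, symmetry, and transitivity in both settings, using only the tools already established: Corollary~\ref{CorDualIsog}, Theorem~\ref{ThmDualIsog}, Proposition~\ref{PropDim}, and the isogeny correspondence of Theorem~\ref{ThmIsogeny}.

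For $A$-motives, reflexivity is immediate: the identity $\id_\ulM$ is injective with zero cokernel, and the zero module is (trivially) finite locally free over $R$. Symmetry is exactly the content of Corollary~\ref{CorDualIsog}: given an isogeny $f\colon\ulM\to\ulN$, one obtains an isogeny $g\colon\ulN\to\ulM$ with $f\circ g=a\cdot\id_\ulN$ and $g\circ f=a\cdot\id_\ulM$ for some $0\ne a\in A$. For transitivity, suppose $f\in\Hom_R(\ulM,\ulN)$ and $g\in\Hom_R(\ulN,\ulP)$ are isogenies. Injectivity of $g\circ f$ is clear. By Theorem~\ref{ThmDualIsog} applied separately to $f$ and $g$, there exist $a,b\in A\setminus\{0\}$ such that $f$ induces an isomorphism $M[\tfrac{1}{a}]\isoto N[\tfrac{1}{a}]$ and $g$ induces an isomorphism $N[\tfrac{1}{b}]\isoto P[\tfrac{1}{b}]$. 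Then $g\circ f$ induces an isomorphism $M[\tfrac{1}{ab}]\isoto P[\tfrac{1}{ab}]$, and so $g\circ f$ is an isogeny by the other direction of Theorem~\ref{ThmDualIsog}.

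Alternatively, and perhaps more elementarily, the short exact sequence
\[
0\;\longto\;\coker f\;\longto\;\coker(g\circ f)\;\longto\;\coker g\;\longto\;0
\]
shows that $\coker(g\circ f)$ is finitely generated over $R$; combined with $\rk\ulM=\rk\ulN=\rk\ulP$ (which follows from Proposition~\ref{PropDim} applied to the two given isogenies), Proposition~\ref{PropDim} yields that $g\circ f$ is an isogeny.

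For abelian Anderson $A$-modules the argument is the same in outline. Reflexivity holds because $\id_\ulE$ is finite and surjective. Symmetry is the abelian Anderson $A$-module statement of Corollary~\ref{CorDualIsog}. Transitivity follows because the composition of finite (respectively surjective) morphisms is finite (respectively surjective); alternatively, one may pass to associated $A$-motives via Theorem~\ref{ThmMotiveOfAModule} and invoke Theorem~\ref{ThmIsogeny}\ref{ThmIsogeny_A} to reduce the claim to the case of $A$-motives already treated. There is no genuine obstacle here: the corollary is a purely formal consequence of the main results proven earlier in the section, and the argument fits comfortably in a few lines.
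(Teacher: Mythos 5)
Your proof is correct and follows the same route as the paper, which simply cites Theorem~\ref{ThmDualIsog} and Corollary~\ref{CorDualIsog}: symmetry comes from the dual isogeny of Corollary~\ref{CorDualIsog}, and transitivity from the localization criterion of Theorem~\ref{ThmDualIsog} (your alternative via the exact sequence of cokernels and Proposition~\ref{PropDim} is also valid). You have merely spelled out the details the paper leaves implicit.
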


\begin{proof}
This follows from Theorem~\ref{ThmDualIsog} and Corollary~\ref{CorDualIsog}.
\end{proof}

\begin{corollary}\label{CorGenericChar}
Let $\gamma(A\setminus\{0\})\subset R\mal$ and let $f\in\Hom_R(\ulM,\ulN)$ be an isogeny between effective $A$-motives $\ulM$ and $\ulN$. Then $f$ is separable. The same is true for isogenies between abelian Anderson $A$-modules. \comment{Also for non-effective $A$-motives?}
\end{corollary}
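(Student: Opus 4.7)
The plan is to use Corollary~\ref{CorDualIsog} to bound the support of $\coker f$ inside $\Spec A_R$, and then exploit the hypothesis $\gamma(A\setminus\{0\})\subset R\mal$ to disengage this support from $\Var(\CJ)$, where $\tau_M$ and $\tau_N$ are already known to be isomorphisms by effectivity of $\ulM$ and $\ulN$.

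First I would invoke Corollary~\ref{CorDualIsog} to obtain $0\ne a\in A$ and an isogeny $g\in\Hom_R(\ulN,\ulM)$ satisfying $f\circ g=a\cdot\id_\ulN$. This forces $aN\subset f(M)$, so the $A_R$-module $\coker f$ is annihilated by $a\otimes 1$, and the same holds for its Frobenius twist $\sigma^*\coker f$. Consequently both $\coker f$ and $\sigma^*\coker f$ are supported on the closed subscheme $\Var(a\otimes 1)\subset\Spec A_R$.

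Next I would verify the geometric key point $\Var(a\otimes 1)\cap\Var(\CJ)=\emptyset$. A prime $\Fp\subset A_R$ in the intersection would contain both $a\otimes 1$ and the generator $a\otimes 1-1\otimes\gamma(a)$ of $\CJ$, hence also $1\otimes\gamma(a)$; this is impossible because $\gamma(a)\in R\mal$ makes $1\otimes\gamma(a)$ a unit in $A_R$. Thus $\Var(a\otimes 1)$ lies entirely inside the open subscheme $U:=\Spec A_R\setminus\Var(\CJ)$.

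Finally I would apply the snake lemma to the commutative diagram of short exact sequences
\[
\xymatrix @C+1pc { 0 \ar[r] & \sigma^*M \ar[r]^{\TS\sigma^*f} \ar[d]_{\TS\tau_M} & \sigma^*N \ar[r] \ar[d]_{\TS\tau_N} & \sigma^*\coker f \ar[r] \ar[d]_{\TS\tau_{\coker f}} & 0 \\ 0 \ar[r] & M \ar[r]^{\TS f} & N \ar[r] & \coker f \ar[r] & 0\,, }
\]
whose top row is exact by local freeness of $\coker f$ over $R$. By Definition~\ref{DefAMotive}, $\tau_M$ and $\tau_N$ restrict to isomorphisms on $U$, so the $5$-lemma forces $\tau_{\coker f}|_U$ to be an isomorphism; since $\coker f$ and $\sigma^*\coker f$ are supported on $\Var(a\otimes 1)\subset U$, this already implies that $\tau_{\coker f}$ is a global isomorphism on $\Spec A_R$, so $f$ is separable by Definition~\ref{DefIsogAMotive}. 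The assertion for abelian Anderson $A$-modules follows at once from Theorems~\ref{ThmMotiveOfAModule} and \ref{ThmIsogeny}\ref{ThmIsogeny_B}. No real obstacle appears — the hypothesis $\gamma(a)\in R\mal$ is used precisely to force $(a\otimes 1)$ and $\CJ$ to be coprime, which is the whole content of the argument.
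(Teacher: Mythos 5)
Your proof is correct and rests on the same key observation as the paper's: the hypothesis $\gamma(a)\in R\mal$ forces the ideal $(a\otimes 1)$, which annihilates (a power of) $\coker f$, to be coprime to $\CJ$, away from which $\tau_M$ and $\tau_N$ are already isomorphisms. The paper executes this algebraically rather than geometrically: it shows that $K:=\coker(\tau_{\coker f})$ is killed both by $a^{q^e}\otimes 1$ and by $(a\otimes1-1\otimes\gamma(a))^{q^e}=a^{q^e}\otimes1-1\otimes\gamma(a)^{q^e}$, hence by the unit $\gamma(a)^{q^e}$, so $K=(0)$, and then deduces injectivity of $\tau_{\coker f}$ separately from the equality of ranks of the finite locally free $R$-modules $\sigma^*(\coker f)$ and $\coker f$; your support-plus-five-lemma argument obtains injectivity and surjectivity in one stroke. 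One small point worth making explicit in your version: the claim that $\sigma^*(\coker f)$ is also supported on $\Var(a\otimes1)$ uses that $\sigma(a\otimes1)=a\otimes1$.
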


\begin{proof}
Consider diagram~\eqref{EqDiagIsogeny} and set $K:=\coker(\tau_{\coker f})$. As in the proof of Theorem~\ref{ThmDualIsog} there is an element $0\ne a\in A$ and a positive integer $n$ with $a^n\cdot \coker f=(0)$, and hence $a^n\cdot K=(0)$. Let $e$ be an integer with $q^e\ge\rk_R\coker\tau_N$ and $q^e\ge n$. Then $(a\otimes1-1\otimes\gamma(a))^{q^e}\cdot \coker\tau_N=(0)$. Therefore
\[
0\;=\;(a\otimes1-1\otimes\gamma(a))^{q^e}\!\cdot\!K\;=\;(a^{q^e}\otimes1-1\otimes\gamma(a)^{q^e})\!\cdot\!K\;=\;-\gamma(a)^{q^e}\!\cdot\!K\,.
\]
Since $\gamma(a)\in R\mal$ we have $K=(0)$, and since $\coker f$ and $\sigma^*(\coker f)$ are finite locally free $R$ modules of the same rank, \cite[Corollary~8.12]{GoertzWedhorn} shows that $\tau_{\coker f}$ is an isomorphism, that is $f$ is separable. The statement about abelian Anderson $A$-modules follows from Theorem\ref{ThmIsogeny}\ref{ThmIsogeny_B}.
\end{proof}

\begin{corollary}
If $f\in\Hom_R(\ulM,\ulN)$ and $g\in\Hom_R(\ulN,\ulM)$ are isogenies between $A$-motives with  $f\circ g=a\cdot\id_\ulN$ and $g\circ f=a\cdot\id_\ulM$ for an $a\in A$, then there is an isomorphism of $Q$-algebras $\QEnd_R(\ulM)\isoto\QEnd_R(\ulN)$ given by $h\otimes b\mapsto f\circ h\circ g\otimes\tfrac{b}{a}$ for $h\in\End_R(\ulM)$. \qed
\end{corollary}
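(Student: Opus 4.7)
The plan is to recognize that the asserted map is ``conjugation by $f$'' in the category of $A$-motives localized at quasi-isogenies, where $f$ becomes invertible with quasi-inverse $g\otimes\frac1a$ (since $f\circ g=a\cdot\id_\ulN$ and $g\circ f=a\cdot\id_\ulM$).

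First, I would define the map on the tensor product. Starting from the assignment
\[
\End_R(\ulM)\times Q\;\longto\;\QEnd_R(\ulN),\qquad (h,b)\;\longmapsto\;(f\circ h\circ g)\otimes\tfrac{b}{a},
\]
I would check that $f\circ h\circ g$ really lies in $\End_R(\ulN)$: it is an $A_R$-homomorphism $N\to N$, and it commutes with $\tau$ since
\[
(f\circ h\circ g)\circ\tau_N \;=\; f\circ h\circ\tau_M\circ\sigma^*g \;=\; f\circ\tau_M\circ\sigma^*h\circ\sigma^*g \;=\; \tau_N\circ\sigma^*(f\circ h\circ g),
\]
using that $f$, $g$ and $h$ are morphisms of $A$-motives. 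The assignment is $A$-bilinear because for $c\in A$ one has $f\circ(ch)\circ g=c(fhg)$, so it descends to a $Q$-linear map $\Phi\colon \QEnd_R(\ulM)\to\QEnd_R(\ulN)$.

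Second, I would verify that $\Phi$ is a $Q$-algebra homomorphism. Unitality: $\Phi(\id_\ulM\otimes 1)=(f\circ g)\otimes\tfrac1a=(a\cdot\id_\ulN)\otimes\tfrac1a=\id_\ulN\otimes 1$. Multiplicativity uses the relation $g\circ f=a\cdot\id_\ulM$:
\[
\Phi(h_1\otimes b_1)\cdot\Phi(h_2\otimes b_2)\;=\;f\circ h_1\circ(g\circ f)\circ h_2\circ g\otimes\tfrac{b_1b_2}{a^2}\;=\;f\circ h_1 h_2\circ g\otimes\tfrac{b_1b_2}{a}\;=\;\Phi\bigl((h_1\otimes b_1)(h_2\otimes b_2)\bigr).
\]
Additivity and $Q$-linearity are immediate from the construction.

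Third, I would exhibit the inverse $\Psi\colon\QEnd_R(\ulN)\to\QEnd_R(\ulM)$ defined by the analogous formula $\Psi(k\otimes c):=(g\circ k\circ f)\otimes\tfrac{c}{a}$; the same computations as above show that $\Psi$ is a well-defined $Q$-algebra homomorphism. Then
\[
\Phi\circ\Psi(k\otimes c)\;=\;(f\circ g)\circ k\circ(f\circ g)\otimes\tfrac{c}{a^3}\;=\;a^2k\otimes\tfrac{c}{a^3}\;=\;k\otimes c,
\]
and symmetrically $\Psi\circ\Phi=\id$, so $\Phi$ is an isomorphism of $Q$-algebras. There is no real obstacle here; the entire argument is bookkeeping once one notices that the hypotheses $fg=a\cdot\id_\ulN$ and $gf=a\cdot\id_\ulM$ are precisely what make $f$ and $g/a$ mutually inverse quasi-morphisms.
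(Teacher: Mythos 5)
Your proof is correct and is exactly the routine verification the paper leaves to the reader (the corollary is stated with an immediate \qed and no written proof): conjugation by $f$ with quasi-inverse $g\otimes\tfrac{1}{a}$. The only blemish is an arithmetic typo in the last display: $\Phi\circ\Psi(k\otimes c)=(f\circ g)\circ k\circ(f\circ g)\otimes\tfrac{c}{a^{2}}$ (not $\tfrac{c}{a^{3}}$), which then correctly equals $a^{2}k\otimes\tfrac{c}{a^{2}}=k\otimes c$.
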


\begin{example}\label{ExFrobenius}
Let $R$ be an $A$-ring \emph{of finite characteristic $\epsilon$}, that is $\gamma\colon A\to R$ factors through $\BF_\epsilon:=A/\epsilon$ for a maximal ideal $\epsilon\subset A$. Let $\ell\in\BN_{>0}$ be divisible by $[\BF_\epsilon:\BF_q]$. Then $\sigma^{\ell*}(\CJ)=(a\otimes1-1\otimes\gamma(a)^{q^\ell}\colon a\in A)=\CJ\subset A_R$, because the elements $\gamma(a)\in\BF_\epsilon$ satisfy $\gamma(a)^{q^\ell}=\gamma(a)$. Let $\ulM=(M,\tau_M)$ be an $A$-motive over $R$. Then $\sigma^{\ell*}\ulM=(\sigma^{\ell*}M,\sigma^{\ell*}\tau_M)$ is also an $A$-motive over $R$, because $\sigma^{\ell*}\tau_M$ is an isomorphism outside $\Var(\sigma^{\ell*}\CJ)=\Var(\CJ)$. If $\ulM$ is effective, then the $A_R$-homomorphism
\begin{equation}\label{EqFrobenius}
{\rm Fr}_{q^\ell\!,\,\ulM}\;:=\;\tau_M^\ell\;:=\;\tau_M\circ\sigma^*\tau_M\circ\ldots\circ\sigma^{(\ell-1)*}\tau_M\colon\sigma^{\ell*}\ulM \longto\ulM
\end{equation}
satisfies $\tau_M\circ\sigma^*{\rm Fr}_{q^\ell\!,\,\ulM}={\rm Fr}_{q^\ell\!,\,\ulM}\circ\sigma^{\ell*}\tau_M$. Moreover, it is injective and its cokernel is a successive extension of the $\sigma^{i*}\coker\tau_M$ for $i=0,\ldots,\ell-1$, whence a finitely presented $R$-module. Therefore ${\rm Fr}_{q^\ell\!,\,\ulM}\in\Hom_R\bigl(\sigma^{\ell*}\ulM,\ulM)$ is an isogeny, called the \emph{$q^\ell$-Frobenius isogeny} of $\ulM$. It is always inseparable, because the $\ell$-th power of $\tau_M$, which equals ${\rm Fr}_{q^\ell\!,\,\ulM}$ annihilates the cokernel of ${\rm Fr}_{q^\ell\!,\,\ulM}$.

If $\ulM$ is not effective, let $n\in\BN_{>0}$ be such that $\epsilon^n=(a)$ is principal. Then $(a\otimes1)\subset\CJ$ and $(a\otimes1)\subset\sigma^{i*}\CJ$ for all $i$. This shows that 
\begin{equation}\label{EqFrobenius2}
{\rm Fr}_{q^\ell\!,\,\ulM}\;:=\;\tau_M^\ell\;:=\;\tau_M\circ\sigma^*\tau_M\circ\ldots\circ\sigma^{(\ell-1)*}\tau_M\colon\sigma^{\ell*}\ulM[\tfrac{1}{a}] \isoto\ulM[\tfrac{1}{a}]
\end{equation}
is a quasi-isogeny in $\QHom_R\bigl(\sigma^{\ell*}\ulM,\ulM)$ by Theorem~\ref{ThmDualIsog}, called the \emph{$q^\ell$-Frobenius quasi-isogeny} of $\ulM$.

Finally if $R=k$ is a field contained in $\BF_{q^\ell}$ then $\sigma^{\ell*}\ulM=\ulM$ and ${\rm Fr}_{q^\ell\!,\,\ulM}\in\QEnd_k\bigl(\ulM)$, respectively ${\rm Fr}_{q^\ell\!,\,\ulM}\in\End_k\bigl(\ulM)$ if $\ulM$ is effective. In this case, $A[\pi]$ lies in the center of $\End_k(\ulM)$ and $Q[\pi]$ lies in the center of $\QEnd_k(\ulM)$, because every $f\in\End_k(\ulM)$ satisfies $f\circ\tau_M=\tau_M\circ\sigma^*f$ and $\sigma^{\ell*}f=f$. If $k=\BF_{q^\ell}$, the center equals $A[\pi]$, respectively $Q[\pi]$, and the isogeny classes of $A$-motives are largely controlled by their Frobenius endomorphism; see \cite[Theorems~8.1 and 9.1]{BH2}.
\end{example}

%%%%%%%%%%%%%%%%%%%%%%%%%%%%%%%%%%%%%%%%%%%%%%%%%%%%%%%%%%%%%%%%%%%%%%
%
%    Torsion points
%
%%%%%%%%%%%%%%%%%%%%%%%%%%%%%%%%%%%%%%%%%%%%%%%%%%%%%%%%%%%%%%%%%%%%%%

\section{Torsion points}\label{SectTorsionPts}
\setcounter{equation}{0}

\begin{definition}\label{DefFaTorsion}
Let $(0)\ne\Fa=(a_1,\ldots,a_n)\subset A$ be an ideal and let $\ulE=(E,\phi)$ be an abelian Anderson $A$-module over $R$. Then 
\[
\ulE[\Fa]\;:=\;\ker\bigl(\phi_{a_1,\ldots,a_n}:=(\phi_{a_1},\ldots,\phi_{a_n})\colon E \longto E^n\bigr)
\]
is called the \emph{$\Fa$-torsion submodule of $\ulE$}.
\end{definition}

This definition is independent of the generators $(a_1,\ldots,a_n)$ of $\Fa$ by the following

\begin{lemma}\label{LemmaWellDef}
\begin{enumerate}
\item \label{LemmaWellDef_A}
If $(a_1,\ldots,a_n)\subset(b_1,\ldots,b_m)\subset A$ are ideals then $\ker(\phi_{b_1,\ldots,b_m})\into\ker(\phi_{a_1,\ldots,a_n})$ is a closed immersion.
\item \label{LemmaWellDef_B}
If $(a_1,\ldots,a_n)=(b_1,\ldots,b_m)$ then $\ker(\phi_{b_1,\ldots,b_m})=\ker(\phi_{a_1,\ldots,a_n})$.
\item \label{LemmaWellDef_C}
For any $R$-algebra $S$ we have $\ulE[\Fa](S)\,=\,\{\,P\in E(S)\colon\phi_a(P)=0\text{ for all }a\in\Fa\,\}$.
\item \label{LemmaWellDef_D}
$\ulE[\Fa]$ is an $A/\Fa$-module via $A/\Fa\to\End_R(\ulE[\Fa]),\,\bar b\mapsto\phi_b$.
\item \label{LemmaWellDef_E}
$\ulE[\Fa]$ is a finite $R$-group scheme of finite presentation.
\end{enumerate}
\end{lemma}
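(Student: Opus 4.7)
The plan is to treat (a) as the key structural statement from which (b), (c), (d) follow almost formally, and then to deduce (e) using Corollary~\ref{CorPhi_a} together with (a).

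First I would prove (a). Since $(a_1,\ldots,a_n)\subset(b_1,\ldots,b_m)$, there exist $c_{ij}\in A$ with $a_i=\sum_j c_{ij}b_j$. For any $R$-algebra $S$ and $P\in E(S)$ with $\phi_{b_j}(P)=0$ for all $j$, the ring homomorphism property of $\phi$ gives $\phi_{a_i}(P)=\sum_j\phi_{c_{ij}}(\phi_{b_j}(P))=0$. Hence we have a functorial inclusion of $S$-points and, by Yoneda, a morphism $\iota\colon\ker(\phi_{b_1,\ldots,b_m})\to\ker(\phi_{a_1,\ldots,a_n})$ of $R$-group schemes. Both targets are closed subschemes of $E$ (they are fiber products along the zero section, which is a closed immersion since $E^n$ is separated). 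Composing $\iota$ with the closed immersion $\ker(\phi_{a_1,\ldots,a_n})\hookrightarrow E$ recovers the closed immersion $\ker(\phi_{b_1,\ldots,b_m})\hookrightarrow E$. Since closed immersions are monomorphisms, a standard factorisation argument forces $\iota$ itself to be a closed immersion.

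Parts (b), (c), (d) are then immediate. For (b), apply (a) twice to obtain closed immersions in both directions; two closed subschemes of $E$ that are mutually contained in each other coincide. For (c), given $P\in\ulE[\Fa](S)$ and $a=\sum c_ia_i\in\Fa$, the same calculation as in (a) gives $\phi_a(P)=0$; the converse inclusion is trivial. For (d), commutativity of $\phi$ implies $\phi_a\circ\phi_b=\phi_b\circ\phi_a$, so $\phi_b$ preserves $\ulE[\Fa]$; and by (c), $\phi_b$ acts as zero for $b\in\Fa$, so the action factors through $A/\Fa$.

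The main obstacle is (e), and the key input is Corollary~\ref{CorPhi_a}. Since $\Fa\ne(0)$, choose $a\in\Fa\setminus\{0\}$. By Corollary~\ref{CorPhi_a}, $\phi_a\colon\ulE\to\ulE$ is an isogeny, so by Corollary~\ref{CorKernel}\ref{CorKernel_A} the group scheme $\ker\phi_a=\ulE[(a)]$ is finite locally free, hence finite and of finite presentation over $R$. Part (a) applied to the inclusion $(a)\subset\Fa$ gives a closed immersion $\ulE[\Fa]\hookrightarrow\ulE[(a)]$, so $\ulE[\Fa]$ is finite over $R$ as a closed subscheme of a finite $R$-scheme. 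For finite presentation, note that inside the finitely presented $R$-scheme $\ulE[(a)]$ the closed subscheme $\ulE[\Fa]=\bigcap_{i=1}^n\ker(\phi_{a_i}|_{\ulE[(a)]})$ is cut out by the pullbacks of the augmentation ideals of the $n$ factors of $E^n$ along $(\phi_{a_1},\ldots,\phi_{a_n})\colon\ulE[(a)]\to E^n$; working Zariski-locally on $\Spec R$ so that $E$ becomes an affine space near the zero section, each such augmentation ideal is finitely generated, yielding a finitely generated ideal defining $\ulE[\Fa]$ in $\ulE[(a)]$. This gives the finite presentation and completes the proof.
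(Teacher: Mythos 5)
Your proof is correct and follows essentially the same route as the paper: express the $a_i$ in terms of the $b_j$ to get (a), deduce (b)--(d) formally, and obtain finiteness in (e) from Corollaries~\ref{CorPhi_a} and \ref{CorKernel} applied to some $(a)\subset\Fa$ together with (a). The only cosmetic differences are that the paper establishes the closed immersion in (a) by factoring $\phi_{a_1,\ldots,a_n}$ through $\phi_{b_1,\ldots,b_m}$ at the level of schemes rather than via the functor of points, and obtains finite presentation in (e) by citing that $\phi_{a_1,\ldots,a_n}$ is a finitely presented morphism between finitely presented $R$-schemes instead of your hands-on argument with augmentation ideals.
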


\begin{proof}
\ref{LemmaWellDef_A} By assumption there are elements $c_{ij}\in A$ with $a_i=\sum_jc_{ij}b_j$. Therefore $\phi_{a_i}=\sum_j\phi_{c_{ij}}\phi_{b_j}$ and the composition of $\phi_{b_1,\ldots,b_m}\colon E\to E^m$ followed by $(\phi_{c_{ij}})_{i,j}\colon E^m\to E^n$ equals $\phi_{a_1,\ldots,a_n}\colon E\to E^n$. This proves \ref{LemmaWellDef_A} and clearly \ref{LemmaWellDef_A} implies \ref{LemmaWellDef_B}.

\smallskip\noindent
To prove \ref{LemmaWellDef_C} let $P\colon\Spec S\to\ulE$ be an $S$-valued point in $\ulE(S)$ with $0=\phi_a(P):=\phi_a\circ P$ for all $a\in\Fa$. If $\Fa=(a_1,\ldots,a_n)$ then in particular $\phi_{a_i}\circ P=0$ for $i=1,\ldots,n$. Therefore $P$ factors through $\ker\phi_{a_1,\ldots,a_n}=\ulE[\Fa]$. 

Conversely let $P\colon\Spec S\to\ulE[\Fa]$ be an $S$-valued point in $\ulE[\Fa](S)$ and let $a\in\Fa$. By \ref{LemmaWellDef_B} we may write $\Fa=(a_1,\ldots,a_n)$ with $a_1=a$ to have $\ulE[\Fa]=\ker\phi_{a_1,\ldots,a_n}$. Therefore $\phi_a(P):=\phi_a\circ P=0$. This proves \ref{LemmaWellDef_C}.

\smallskip\noindent
\ref{LemmaWellDef_D} The relation $ab=ba$ in $A$ implies $\phi_a\circ\phi_b=\phi_b\circ\phi_a$. Using that the closed subscheme $\ulE[\Fa]$ is uniquely determined by \ref{LemmaWellDef_C} it follows that the ring homomorphism $A\to\End_R(\ulE[\Fa]),\,b\mapsto\phi_b|_{\ulE[\Fa]}$ is well defined. If $b\in\Fa$ then clearly $\phi_b|_{\ulE[\Fa]}=0$ and so this ring homomorphism factors through $A/\Fa$.

\smallskip\noindent
\ref{LemmaWellDef_E} If $\Fa=(a_1,\ldots,a_n)$ then $\ulE[\Fa]=\ker\phi_{a_1,\ldots,a_n}$ is of finite presentation, because $\phi_{a_1,\ldots,a_n}$ is a morphism of finite presentation between the schemes $E$ and $E^n$ of finite presentation over $R$ by \cite[IV$_1$, Proposition~1.6.2]{EGA}. The finiteness of $\ulE[\Fa]$ follows for $\Fa=(a)$ from Corollaries~\ref{CorPhi_a} and \ref{CorKernel}, and for general $\Fa$ from \ref{LemmaWellDef_A} by considering some $(a)\subset\Fa$.
\end{proof}

The following lemma is a version of the Chinese remainder theorem in our context.

\begin{lemma}\label{LemmaProduct}
Let $(0)\ne\Fa,\Fb\subset A$ be two ideals with $\Fa+\Fb=A$. 
\begin{enumerate}
\item \label{LemmaProduct_A}
For an abelian Anderson $A$-module $\ulE$ there is a canonical isomorphism $\ulE[\Fa]\times_R\ulE[\Fb]\isoto\ulE[\Fa\Fb]$.
\item \label{LemmaProduct_B}
For an effective $A$-motive $\ulM$ there is a canonical isomorphism $\ulM/\Fa\Fb\ulM\isoto\ulM/\Fa\ulM\oplus\ulM/\Fb\ulM$ of finite $\BF_q$-shtukas. 
\end{enumerate}
\end{lemma}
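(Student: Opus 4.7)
The plan is to apply the Chinese Remainder Theorem in both settings. Since $\Fa+\Fb=A$, I would fix elements $a\in\Fa$ and $b\in\Fb$ with $a+b=1$. A preliminary observation I would record is that $\Fa\Fb=\Fa\cap\Fb$, since any $x\in\Fa\cap\Fb$ satisfies $x=ax+bx\in\Fa\Fb$.

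For part~\ref{LemmaProduct_A}, I would construct mutually inverse $R$-morphisms functorially on $S$-valued points. In one direction, send $(P,P')\in\ulE[\Fa](S)\times\ulE[\Fb](S)$ to $P+P'$; this lies in $\ulE[\Fa\Fb](S)$ because $\Fa\Fb=\Fa\cap\Fb$ annihilates both $P$ and $P'$ by Lemma~\ref{LemmaWellDef}\ref{LemmaWellDef_C}. In the other direction, send $Q\in\ulE[\Fa\Fb](S)$ to $(\phi_b(Q),\phi_a(Q))$, which lies in $\ulE[\Fa](S)\times\ulE[\Fb](S)$ since for $\alpha\in\Fa$ one has $\phi_\alpha(\phi_b(Q))=\phi_{\alpha b}(Q)=0$ as $\alpha b\in\Fa\Fb$, and symmetrically on the other factor. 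The two composites are the identity: $Q\mapsto \phi_b(Q)+\phi_a(Q)=\phi_{a+b}(Q)=Q$, and $(P,P')\mapsto(\phi_b P+\phi_b P',\phi_a P+\phi_a P')=(P,P')$, using that $a$ kills $P$ and $b$ kills $P'$ so $\phi_b P=\phi_{a+b}P=P$ and likewise $\phi_a P'=P'$. Functoriality turns these into morphisms of $R$-schemes, and $A$-equivariance is automatic because the formulas only involve $\phi_a$ and $\phi_b$, which commute with all $\phi_c$.

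For part~\ref{LemmaProduct_B}, the analogous algebraic formula yields an $A_R$-linear map $\psi\colon M/\Fa\Fb M\to M/\Fa M\oplus M/\Fb M$, $m\mapsto(m\bmod\Fa M,\,m\bmod\Fb M)$, with candidate inverse $(m_1,m_2)\mapsto bm_1+am_2\bmod\Fa\Fb M$. Injectivity of $\psi$ reduces to $\Fa M\cap\Fb M=\Fa\Fb M$, which follows from $m=am+bm$ for $m$ in the intersection; surjectivity and the identities for the two composites are direct computations using $a+b=1$. To upgrade this to an isomorphism of finite $\BF_q$-shtukas, I would note that $\Fa\subset A$ is fixed by $\sigma=\id_A\otimes\Frob_{q,R}$, so by right-exactness of $\sigma^*$ one has $\sigma^*(M/\Fa M)=\sigma^*M/\Fa\sigma^*M$ (and similarly for $\Fb$ and $\Fa\Fb$), and $\tau_M$ descends to $\tau_{M/\Fa M}$. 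Since $\tau_M$ is $A_R$-linear and $\psi$ is built from $A_R$-module operations only, $\psi$ automatically intertwines the induced $\tau$'s on both sides.

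No real obstacle arises; the main point is simply that the Chinese Remainder decomposition is built entirely out of the operations $\phi_a,\phi_b$ (respectively multiplication by $a,b\in A\subset A_R$), and these commute with the $A$-action on $\ulE$ and with $\tau_M$ on $\ulM$, so the resulting bijections automatically respect all relevant extra structure.
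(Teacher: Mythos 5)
Your proof is correct and takes essentially the same route as the paper's: the Chinese remainder decomposition built from elements $a\in\Fa$, $b\in\Fb$ with $a+b=1$, using the addition map in one direction and $(\phi_b,\phi_a)$ in the other for part~\ref{LemmaProduct_A}, and tensoring the decomposition $A/\Fa\Fb\cong A/\Fa\times A/\Fb$ with $M$ for part~\ref{LemmaProduct_B}. Your normalization $a+b=1$ is just a harmless sharpening of the paper's choice $a\equiv1\bmod\Fb$, $b\equiv1\bmod\Fa$.
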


\begin{proof}
By the Chinese remainder theorem there is an isomorphism $A/\Fa\Fb\isoto A/\Fa\times A/\Fb$ whose inverse is given by $(x_\Fa,x_\Fb)\mapsto bx_\Fa+ax_\Fb$ for certain elements $a\in\Fa$ and $b\in\Fb$ which satisfy $a\equiv1\mod\Fb$ and $b\equiv1\mod\Fa$, and hence $a+b\equiv1\mod\Fa\Fb$.

\smallskip\noindent
\ref{LemmaProduct_B} follows directly from this, because $\ulM/\Fa\ulM=\ulM\otimes_AA/\Fa$.

\smallskip\noindent
\ref{LemmaProduct_A} By Lemma~\ref{LemmaWellDef}\ref{LemmaWellDef_A} the addition $\Delta$ on $\ulE[\Fa\Fb]$ defines a canonical morphism $\ulE[\Fa]\times_R\ulE[\Fb]\into\ulE[\Fa\Fb]\times_R\ulE[\Fa\Fb]\xrightarrow{\;\Delta\,}\ulE[\Fa\Fb]$. Its inverse is described as follows. The elements $a,b\in A$ from above satisfy $a\Fb\subset\Fa\Fb$ and $b\Fa\subset\Fa\Fb$. By Lemma~\ref{LemmaWellDef}\ref{LemmaWellDef_C} the endomorphism $\phi_a$ of $\ulE[\Fa\Fb]$ factors through $\ulE[\Fb]$ and $\phi_b$ factors through $\ulE[\Fa]$. So the inverse is the morphism $(\phi_b,\phi_a)\colon\ulE[\Fa\Fb]\to\ulE[\Fa]\times_R\ulE[\Fb]$. Indeed, for $x\in\ulE[\Fa\Fb]$, we compute $\phi_b(x)+\phi_a(x)=\phi_{a+b}(x)=\phi_1(x)=x$, because $a+b\equiv1\mod\Fa\Fb$. On the other hand, for $x\in\ulE[\Fa]$ and $y\in\ulE[\Fb]$, we compute $\phi_b(x+y)=\phi_b(x)=x$ and $\phi_a(x+y)=\phi_a(y)=y$, because $b\equiv1\mod\Fa$ and $a\equiv1\mod\Fb$.
\end{proof}

\begin{theorem}\label{ThmATorsion}
Let $\ulE$ be an abelian Anderson $A$-module and let  $(0)\ne\Fa\subset A$ be an ideal.
\begin{enumerate}
\item \label{ThmATorsion_A}
Then $\ulE[\Fa]$ is a finite locally free group scheme over $\Spec R$ and a strict $\BF_q$-module scheme.
\item \label{ThmATorsion_B}
$\ulE[\Fa]$ is \'etale over $R$ if and only if $R\cdot\gamma(\Fa)=R$, that is if and only if $\Fa+\CJ=A_R$.
\item \label{ThmATorsion_C}
If $\ulM=\ulM(\ulE)$ is the associated effective $A$-motive then there are canonical $A$-equivariant isomorphisms
\begin{eqnarray*}
\ulM/\Fa\ulM & \isoto & \ulM_q(\ulE[\Fa]) \qquad \text{of finite $\BF_q$-shtukas and}\\[2mm]
\Dr_q(\ulM/\Fa\ulM) & \isoto & \ulE[\Fa] \qquad \text{of finite locally free $R$-group schemes.}
\end{eqnarray*}
\end{enumerate}
\end{theorem}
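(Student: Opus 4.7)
Plan. I aim to establish part (c) first, from which (a) follows immediately, and then derive (b) from (c) via the finite shtuka equivalence. The principal technical obstacle will be proving that a natural closed immersion $G := \Dr_q(\ulM/\Fa\ulM) \hookrightarrow \ulE[\Fa]$ is an isomorphism; I overcome this by exploiting the Dedekind property of $A$ together with a rank count.

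First I observe that $\ulM/\Fa\ulM$ carries a canonical finite $\BF_q$-shtuka structure: since $A$ is a Dedekind domain and $\Fa\ne 0$, the algebra $A/\Fa$ is a finite $\BF_q$-algebra, so $\ulM/\Fa\ulM$ is a finite locally free $R$-module of rank $r\cdot\dim_{\BF_q}(A/\Fa)$, and $\Fa\ulM$ is $\tau_M$-stable (as $\sigma$ fixes $A$), giving an induced map $\tau_{\ulM/\Fa\ulM}$. By Definition~\ref{DefDr_q} and Theorem~\ref{ThmEqAModSch}, $G$ is a finite locally free strict $\BF_q$-module scheme over $R$, and the projection $\ulM \onto \ulM/\Fa\ulM$ induces by Example~\ref{ExVOfAModule} a closed immersion $\iota : G \hookrightarrow \Dr_q(\ulM) = E$. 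I check that $\iota$ factors through $\ulE[\Fa]$: for $b \in \Fa$, the composition $\phi_b \circ \iota$ corresponds contravariantly to $\ulM \xrightarrow{\cdot b} \ulM \onto \ulM/\Fa\ulM$, which vanishes; and since by Example~\ref{ExVOfAModule} the coordinate ring $B_E$ is generated as an $R$-algebra by $\ulM \subset B_E$, vanishing on $\ulM$ forces $\phi_b \circ \iota = 0$. Thus $\iota$ refines to a closed immersion $G \hookrightarrow \ulE[\Fa]$.

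To show this immersion is an isomorphism I use the Dedekind property of $A$ to find an ideal $\Fb \subset A$ coprime to $\Fa$ with $\Fa\Fb = (c)$ principal: writing $\Fa = \prod\Fp_i^{e_i}$, choose $c \in \Fa \setminus \bigcup_i \Fa\Fp_i$ via prime avoidance; then $(c) = \Fa\Fb$ with $\Fb$ necessarily coprime to $\Fa$. By Corollary~\ref{CorPhi_a}, $\phi_c$ is an isogeny, so Theorem~\ref{ThmIsogeny}\ref{ThmIsogeny_C} gives $\ulE[(c)] \cong \Dr_q(\ulM/c\ulM)$, finite locally free of rank $q^{r\dim_{\BF_q}A/(c)}$ over $R$. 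Lemma~\ref{LemmaProduct} supplies compatible decompositions $\ulE[(c)] \cong \ulE[\Fa] \times_R \ulE[\Fb]$ and $\ulM/c\ulM \cong \ulM/\Fa\ulM \oplus \ulM/\Fb\ulM$, yielding $\ulE[(c)] \cong G \times_R G'$ with $G' := \Dr_q(\ulM/\Fb\ulM)$. Using the augmentation splitting $\CO_{\ulE[\Fb]} = R \oplus I$ as $R$-modules, each of $\ulE[\Fa]$, $\ulE[\Fb]$ is an $R$-direct summand of $\CO_{\ulE[(c)]}$, hence finite locally free over $R$. The closed immersions $G \hookrightarrow \ulE[\Fa]$ and $G' \hookrightarrow \ulE[\Fb]$ give $\rk_R G \le \rk_R \ulE[\Fa]$ and $\rk_R G' \le \rk_R \ulE[\Fb]$, but their product satisfies $\rk_R G\cdot\rk_R G' = \rk_R\ulE[(c)] = \rk_R\ulE[\Fa]\cdot\rk_R\ulE[\Fb]$; this forces both inequalities to be equalities, and a closed immersion of finite locally free $R$-schemes of equal rank is an isomorphism. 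Hence $G \isoto \ulE[\Fa]$, giving the second isomorphism in (c); the first follows by applying $\ulM_q$ and using Theorem~\ref{ThmEqAModSch}\ref{ThmEqAModSch_E}. Part (a) is then immediate since $\Dr_q(\ulM/\Fa\ulM)$ is by construction a finite locally free strict $\BF_q$-module scheme.

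For (b), by (c) and Theorem~\ref{ThmEqAModSch}\ref{ThmEqAModSch_B}, $\ulE[\Fa]$ is \'etale over $R$ iff $\tau_{\ulM/\Fa\ulM}$ is an isomorphism. Tensoring the exact sequence $\sigma^*M \xrightarrow{\tau_M} M \to K \to 0$ (with $K := \coker\tau_M$) over $A_R$ with $A_R/\Fa$ identifies $\coker\tau_{M/\Fa M}$ with $K/\Fa K$; as $\sigma^*(M/\Fa M)$ and $M/\Fa M$ are locally free $R$-modules of the same rank, $\tau_{M/\Fa M}$ is an isomorphism iff $\Fa K = K$. Now $K$ is locally free over $R$ of rank $d > 0$ by Theorem~\ref{ThmMotiveOfAModule} and killed by $\CJ^n$, so at each maximal ideal $\bar\Fm \subset R$ with residue field $k$ the fiber $K \otimes_R k$ is a nonzero module over the local Artinian ring $A_k/\CJ_k^n$, on which every $a \in A$ acts as $\gamma(a)\cdot\id$ plus a nilpotent endomorphism. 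Nakayama's lemma, applied first globally to $K$ over $R$ and then inside each local ring $A_k/\CJ_k^n$, shows $\Fa K = K$ iff at every $\bar\Fm$ some $a \in \Fa$ has $\gamma(a) \notin \bar\Fm$, equivalently $R\cdot\gamma(\Fa) = R$, equivalently $\Fa + \CJ = A_R$.
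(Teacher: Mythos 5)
Your argument is correct, but it follows a genuinely different route from the paper's, in both the main part and part (b). For (a) and (c) the paper reduces to $\Fa=\Fp^e$, invokes the \emph{finiteness of the ideal class group} to make $\Fp^e=(a)$ principal for some large $e$, establishes the exact sequence $0\to\ulE[\Fp^{e-n}]\to\ulE[\Fp^e]\to\ulE[\Fp^n]\to0$ (which requires showing that $j_{e,n}$ is an \fpqc-epimorphism), deduces flatness of $\ulE[\Fp^n]$ from fibrewise criteria, and then obtains (c) by applying the exact functors $\Dr_q$ and $\ulM_q$ to these sequences. You instead choose $c\in\Fa$ with $(c)=\Fa\Fb$ and $\Fb$ coprime to $\Fa$, realize $\ulE[\Fa]$ as a direct factor of $\ker\phi_c$ via Lemma~\ref{LemmaProduct}, get local freeness from the direct-summand decomposition of the coordinate ring, and upgrade the closed immersion $\Dr_q(\ulM/\Fa\ulM)\into\ulE[\Fa]$ to an isomorphism by a rank count. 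This buys you independence from the finiteness of the class group and avoids the epimorphism/flatness-descent machinery entirely; what it loses is the exact sequence \eqref{EqTorsionExSeq}, which the paper reuses later (e.g.\ for the $z$-divisibility of $\ulE[\epsilon^\infty]$ in Theorem~\ref{ThmDivLocAModOfAMod}). For (b) the paper works on the group-scheme side (a Zariski cover for one direction, the Jacobi criterion and the common kernel of the $\Lie\phi_{a_i}$ for the other), whereas you transport the question through (c) and Theorem~\ref{ThmEqAModSch}\ref{ThmEqAModSch_B} to the condition $\Fa\cdot\coker\tau_M=\coker\tau_M$ and settle it by Nakayama over the local Artinian ring $A_k/\CJ^d_k$; this is a clean and uniform alternative, using that $d=\rk_R\coker\tau_M>0$ by Definition~\ref{DefAndModule}. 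One small imprecision: the existence of $c\in\Fa\setminus\bigcup_i\Fa\Fp_i$ is not literal prime avoidance (the $\Fa\Fp_i$ are not prime); it follows from the isomorphism $\Fa/\Fa\Fp_1\cdots\Fp_k\isoto\prod_i\Fa/\Fa\Fp_i$ with each factor a one-dimensional $A/\Fp_i$-vector space, which is the standard approximation argument in a Dedekind domain. With that substitution the proof is complete.
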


\begin{proof}
Since $A$ is a Dedekind domain, $\Fa=\Fp_1^{e_1}\cdot\ldots\cdot\Fp_r^{e_r}$ for prime ideals $\Fp_i\in A$ and positive integers $e_i$. By Lemma~\ref{LemmaProduct} and the exactness of the functors $\Dr_q$ and $\ulM_q$, see Theorem~\ref{ThmEqAModSch}\ref{ThmEqAModSch_A}, it suffices to treat the case $\Fa=\Fp^e$. Let $A_\Fp$ be the localization of $A$ at $\Fp$. Since $A/\Fp^e=A_\Fp/\Fp^e A_\Fp$ there is an element $z\in A$ which is congruent modulo $\Fa$ to a uniformizer of $A_\Fp$. Moreover, since $\ulE[\Fp^e]$ is an $A_\Fp/\Fp^e A_\Fp$-module, every $\phi_s$ with $s\in A\setminus\Fp$ is an automorphism of $\ulE[\Fp^e]$. Let $0\le n\le e$. We denote the inclusion $\ulE[\Fp^n]\into\ulE[\Fp^e]$ of Lemma~\ref{LemmaWellDef}\ref{LemmaWellDef_A} by $i_{n,e}$. By Lemma~\ref{LemmaWellDef}\ref{LemmaWellDef_C} the endomorphism $\phi_z^{e-n}$ of $\ulE[\Fp^e]$ has kernel $\ulE[\Fp^{e-n}]$ and factors through the closed subscheme $\ulE[\Fp^n]$ via a morphism $j_{e,n}\colon\ulE[\Fp^e]\to\ulE[\Fp^n]$ with $\phi_z^{e-n}=i_{n,e}\circ j_{e,n}$. We claim that $j_{e,n}$ is an epimorphism in the category of sheaves on the big \fpqc-site over $\Spec R$, and we therefore have an exact sequence
\begin{equation}\label{EqTorsionExSeq}
\xymatrix @C+2pc { 0 \ar[r] & \ulE[\Fp^{e-n}] \ar[r]^{\TS i_{e-n,e}} & \ulE[\Fp^e] \ar[r]^{\TS j_{e,n}\quad} & \ulE[\Fp^n] \ar[r] & 0\,.
}
\end{equation}
To prove the claim let $S$ be an $R$-algebra and let $P\colon\Spec S\to\ulE[\Fp^n]$ be an $S$-valued point in $\ulE[\Fp^n](S)$. Since $\phi_{z^{e-n}}\colon\ulE\to\ulE$ is an isogeny by Corollary~\ref{CorPhi_a}, hence an epimorphism of \fpqc-sheaves by Proposition~\ref{PropIsogAModule}\ref{PropIsogAModule_F}, there exists a faithfully flat $S$-algebra $S'$ and a point $P'\in E(S')$ with $\phi_{z^{e-n}}(P')=P$. We have to show that $P'\in\ulE[\Fp^e](S')$. For this purpose let $a\in\Fp^e$. Then $\tfrac{a}{1}=\tfrac{c}{s}(\tfrac{z}{1})^e$ in $A_\Fp$ for $c\in A,\,s\in A\setminus\Fp$. We compute
\[
\phi_a(P')\,=\,\phi_s^{-1}\circ\phi_c\circ\phi_{z^n}\circ\phi_{z^{e-n}}(P')\,=\,\phi_s^{-1}\circ\phi_c\circ\phi_{z^n}(P)\,=\,0\,,
\]
because $z^n\in\Fp^n$. This proves our claim and establishes the exactness of \eqref{EqTorsionExSeq}.

We now use that $A$ is a Dedekind domain with finite ideal class group. This means that for the prime ideal $\Fp\subset A$ there are (arbitrarily large) integers $e$ such that $\Fp^e=(a)$ is principal. Then $\ulE[\Fp^e]=\ker\phi_a$ is a finite locally free $R$-group scheme by Corollaries~\ref{CorPhi_a} and \ref{CorKernel}. If $0\le n\le e$ then we show that $\ulE[\Fp^n]$ is flat over $R$. Namely, using the epimorphism $j_{e,n}\colon\ulE[\Fp^e]\to\ulE[\Fp^n]$ from \eqref{EqTorsionExSeq} and the flatness of $\ulE[\Fp^e]$ over $R$, the flatness of $\ulE[\Fp^n]$ will follow from \cite[IV$_3$, Th\'eor\`eme~11.3.10]{EGA} once we show that $j_{e,n}$ is flat in each fiber over a point of $\Spec R$. This follows from \cite[\S\,III.3, Corollaire~7.4]{DemazureGabriel} and so $\ulE[\Fp^n]$ is flat over $R$ for all $n$. By Lemma~\ref{LemmaWellDef}\ref{LemmaWellDef_E} this proves that $\ulE[\Fp^n]$ is a finite locally free group scheme over $\Spec R$. Moreover, it is a strict $\BF_q$-module scheme by \cite[Proposition~2]{Faltings02}, because for $\Fp^n=(a_1,\ldots,a_n)$ the morphism $\phi_{a_1,\ldots,a_n}$ is strict $\BF_q$-linear by Example~\ref{ExampleStrictMorphism}. So \ref{ThmATorsion_A} is established.

If $\Fa=\Fp^e=(a)$ we know from Theorem~\ref{ThmIsogeny}\ref{ThmIsogeny_C} applied to the isogeny $\phi_a$ and $\coker\ulM(\phi_a)=\ulM/a\ulM$ that \ref{ThmATorsion_C} holds. If $0\le n\le e$ we use the exact sequence \eqref{EqTorsionExSeq} and the fact that the functors $\Dr_q$ and $\ulM_q$ are exact by Theorem~\ref{ThmEqAModSch}. Namely, multiplication with $z^{e-n}$ on $\ulM/a\ulM$ has cokernel $\ulM/\Fp^{e-n}\ulM$ and image isomorphic to $\ulM/\Fp^n\ulM$. We obtain an exact sequence of finite $\BF_q$-shtukas
\begin{equation}\label{EqTorsionExSeq2}
\xymatrix @C+2pc { 0 \ar[r] & \ulM/\Fp^n\ulM \ar[r]^{\TS \beta_{n,e}} & \ulM/a\ulM \ar[r]^{\TS \alpha_{e,e-n}\quad} & \ulM/\Fp^{e-n}\ulM \ar[r] & 0
}
\end{equation}
with $\beta_{n,e}\circ\alpha_{e,n}=z^{e-n}$ on $\ulM/a\ulM$. Applying $\Dr_q$ to \eqref{EqTorsionExSeq2}, using the exactness of $\Dr_q$, and that $\Dr_q(\ulM/a\ulM)=\ulE[\Fp^e]$ and $\Dr_q(z^{e-n})=\phi_z^{e-n}$, proves $\Dr_q(\ulM/\Fp^n\ulM)=\ulE[\Fp^n]$. Conversely applying $\ulM_q$ to \eqref{EqTorsionExSeq}, using the exactness of $\ulM_q$, and that $\ulM/a\ulM=\ulM(\ulE[\Fp^e])$ and $z^{e-n}=\ulM_q(\phi_z^{e-n})$, proves $\ulM/\Fp^n\ulM=\ulM_q(\ulE[\Fp^n])$. This establishes \ref{ThmATorsion_C} in general.

\medskip\noindent
\ref{ThmATorsion_B} Let $R\cdot\gamma(\Fa)=R$, that is there are elements $a_1,\ldots,a_n\in\Fa$ and $b_1,\ldots,b_n\in R$ with $\sum_{i=1}^nb_i\gamma(a_i)=1$. Then the open subschemes $\Spec R[\tfrac{1}{\gamma(a_i)}]\subset\Spec R$ cover $\Spec R$ and it suffices to check that $\ulE[\Fa]$ is \'etale over $\Spec R[\tfrac{1}{\gamma(a_i)}]$ for each $i$. But there $\ulE[\Fa]$ is a closed subscheme of $\ulE[a_i]$ which is \'etale by Corollary~\ref{CorPhi_a}. This shows that $\ulE[\Fa]$ is unramified over $R$. Since it is flat by \ref{ThmATorsion_A}, it is \'etale as desired.

Conversely assume that $R\cdot\gamma(\Fa)\subset\Fm$ for a maximal ideal $\Fm\subset R$ and set $k=R/\Fm$. Over a field extension $k'$ of $k$  we have $E\times_Rk=\BG_{a,k'}^d=\Spec k'[x_1,\ldots,x_d]$. We will show that $\ulE[\Fa]\times_R k'$ is not \'etale over $k'$ by applying the Jacobi criterion \cite[\S 2.2, Proposition~7]{BLR}. Let $\Fa=(a_1,\ldots,a_n)$. Then $\ulE[\Fa]=\Spec k'[x_1,\ldots,x_d]/\bigl(\phi_{a_1}^*(x_1,\ldots,x_d)\colon j=1,\ldots,n\bigr)$. The Jacobi matrix is 
\[
\frac{\partial\phi_{a_j}^*}{\partial x_i}\;=\;\left(\begin{smallmatrix} \TS \Lie \phi_{a_1}\\[-1mm] \TS \vdots \\[1mm] \TS \Lie \phi_{a_n} \end{smallmatrix}\right)\;\in\;(k')^{nd\times d}.
\]
Since $\gamma(a_i)=0$ in $k'$ each $\Lie \phi_{a_i}$ is a nilpotent $d\times d$ matrix. Since $\phi_{a_i}\circ\phi_{a_j}=\phi_{a_ia_j}=\phi_{a_j}\circ\phi_{a_i}$ we have $\Lie \phi_{a_i}(\ker \Lie \phi_{a_j})\subset\ker \Lie \phi_{a_j}$. Therefore all $\ker \Lie \phi_{a_i}$ have a non-trivial intersection. This shows that the rank of the Jacobi matrix is less than $d$ and $\ulE[\Fa]\times_R k'$ is not \'etale over $k'$.
\end{proof}

\begin{proposition}\label{PropEtFundGp}
Let $\ulM=(M,\tau_M)$ be an $A$-motive over $R$ of rank $r$ and let $(0)\ne\Fa\subset A$ be an ideal with $R\cdot\gamma(\Fa)=R$, that is $\Fa+\CJ=A_R$. Let $\bar s=\Spec\Omega$ be a geometric base point of $\Spec R$. Then $\ulM/\Fa\ulM$ is an \'etale finite $\BF_q$-shtuka whose $\tau$-invariants $(\ulM/\Fa\ulM)^\tau(\Omega)$, see \eqref{EqTauInv}, form a free $A/\Fa$-module of rank $r$ which carries a continuous action of the \'etale fundamental group $\pi_1^\et(\Spec R,\bar s)$.
\end{proposition}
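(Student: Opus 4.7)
The plan is to proceed in three steps: verify that $\ulM/\Fa\ulM$ is an \'etale finite $\BF_q$-shtuka, determine the $A/\Fa$-module structure of $(\ulM/\Fa\ulM)^\tau(\Omega)$ by trivializing the base change to $\Omega$, and finally produce the $\pi_1^\et$-action by \'etale descent.

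First I would verify that $\ulM/\Fa\ulM := (M/\Fa M,\bar\tau_M)$ is well defined and \'etale. Since $\Fa\ne 0$ and every closed point of $C\setminus\{\infty\}$ has residue field finite over $\BF_q$, the Artinian ring $A/\Fa$ has finite dimension $n$ over $\BF_q$. Therefore $A_R/\Fa=(A/\Fa)\otimes_{\BF_q}R$ is finite free of rank $n$ over $R$, and $M/\Fa M$ is finite locally free of rank $rn$ over $R$. The hypothesis $\Fa+\CJ=A_R$ amounts to $\Var(\Fa)\cap\Var(\CJ)=\emptyset$, so the closed immersion $\Spec A_R/\Fa\hookrightarrow\Spec A_R$ factors through $\Spec A_R\setminus\Var(\CJ)$. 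Because $\sigma$ is the identity on $A$ we have $\sigma^*(M/\Fa M)=\sigma^*M/\Fa\sigma^*M$, so the isomorphism $\tau_M$ from Definition~\ref{DefAMotive} descends to an isomorphism $\bar\tau_M\colon\sigma^*(M/\Fa M)\isoto M/\Fa M$, yielding \'etaleness.

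Next I would prove that $W:=(\ulM/\Fa\ulM)^\tau(\Omega)$ is free of rank $r$ over $A/\Fa$; this is the main obstacle. Set $V:=(M/\Fa M)\otimes_R\Omega$ and $A_\Omega/\Fa:=A/\Fa\otimes_{\BF_q}\Omega$; thus $V$ is locally free of rank $r$ over $A_\Omega/\Fa$. The classical Lang--Steinberg / Artin--Schreier lemma yields $W\otimes_{\BF_q}\Omega\isoto V$: trivializing $V\cong\Omega^n$ and writing $\tau$-invariance as the system $v_i=\sum_jF_{ij}v_j^q$ for the matrix $(F_{ij})$ of $\bar\tau_M\otimes\id_\Omega$, the Jacobian in the variables $(v_i)$ equals the identity (Frobenius has zero derivative), so the affine $\Omega$-scheme of solutions is \'etale of degree $q^n$ and therefore has $q^n$ points. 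Hence $\dim_{\BF_q}W=n$ and $W\otimes_{\BF_q}\Omega\isoto V$, an isomorphism of $A_\Omega/\Fa$-modules since the $A/\Fa$-action commutes with $\tau$. The subtle point is that $A_\Omega/\Fa$ may be a product of several local factors (one per embedding of each residue field of $A/\Fa$ into $\Omega$), so one cannot directly read off the $A/\Fa$-rank of $W$. Instead, faithfully flat descent along $A/\Fa\to A_\Omega/\Fa$ yields that $W$ is locally free of rank $r$ over $A/\Fa$, and semi-locality of the Artinian ring $A/\Fa$ then upgrades ``locally free'' to ``free.''

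Finally, the $\pi_1^\et$-action on $W$ follows from the standard equivalence between \'etale finite $\BF_q$-shtukas over $R$ and lisse \'etale sheaves of finite $\BF_q$-modules on $\Spec R$. Concretely, the assignment $(R\to R')\mapsto(\ulM/\Fa\ulM\otimes_RR')^\tau$ defines a sheaf on $(\Spec R)_\et$. By Theorem~\ref{ThmEqAModSch}\ref{ThmEqAModSch_B}, $\Dr_q(\ulM/\Fa\ulM)$ is finite \'etale over $R$; after pulling back along an \'etale cover that trivializes it, the shtuka becomes trivial and, combined with Lemma~\ref{LemmaTauInvInR} applied to the connected \'etale stalks, its $\tau$-invariants sheaf becomes constant with fiber an $\BF_q$-module of rank $rn$. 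The sheaf is therefore lisse, so evaluating at $\bar s$ gives a continuous $\pi_1^\et(\Spec R,\bar s)$-action on $W$; this action commutes with the $A/\Fa$-structure since the latter is induced by endomorphisms of the shtuka and hence is functorial.
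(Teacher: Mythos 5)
Your argument is essentially correct, but it takes a genuinely different route from the paper's. The paper applies Lang's theorem not over the geometric point but over $\Spec R$ itself, and not to $\GL_{rn,\BF_q}$ but to the Weil restriction $G=\Res_{A/\Fa|\BF_q}\GL_{r,A/\Fa}$ (smooth and connected): after Zariski-locally trivializing $M/\Fa M\cong(A/\Fa)^r\otimes_{\BF_q}R$, the matrix $b$ of $\tau_{M/\Fa M}$ is a point of $G(R)$, and pulling back the Lang map $L\colon g\mapsto g\cdot\sigma^*g^{-1}$ along $b$ produces a finite \'etale cover $\Spec R'\to\Spec R$ and a $c\in G(R')$ with $c=b\cdot\sigma^*c$, i.e.\ an $A/\Fa\otimes_{\BF_q}R'$-basis of $\tau$-invariants. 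This yields the freeness of rank $r$ over $A/\Fa$ and the continuity of the $\pi_1^{\et}$-action in one stroke, and is precisely the device that sidesteps the ``subtle point'' you flag about $A/\Fa\otimes_{\BF_q}\Omega$ splitting into several local factors. Your alternative --- Artin--Schreier point-counting over $\Omega$ forgetting the $A/\Fa$-structure, then faithfully flat descent along $A/\Fa\to A/\Fa\otimes_{\BF_q}\Omega$ together with semi-locality of the Artinian ring $A/\Fa$, and lisse-ness of the sheaf of $\tau$-invariants for continuity --- is valid and in some respects more elementary, needing Lang's theorem only for affine space over a separably closed field rather than for a Weil restriction over a general base.

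Three points to tighten. First, your $n$ drifts: $M/\Fa M$ has $R$-rank $rn$ with $n=\dim_{\BF_q}A/\Fa$, so the solution scheme has degree $q^{rn}$ and $\dim_{\BF_q}W=rn$, not $n$. Second, equality of dimensions does not by itself make the natural map $W\otimes_{\BF_q}\Omega\to V$ an isomorphism; you still need its injectivity, i.e.\ the standard lemma that $\BF_q$-linearly independent $\tau$-invariant vectors remain $\Omega$-linearly independent (take a minimal relation and apply $\tau$), or else get surjectivity directly from Lang's theorem for $\GL_{rn}$ over $\Omega$. Third, the $\tau$-invariants are not literally the points of $\Dr_q(\ulM/\Fa\ulM)$ but are dual to them (compare Theorem~\ref{ThmEtFundGp}); for the continuity argument it is cleaner to note that $R'\mapsto(\ulM/\Fa\ulM\otimes_RR')^\tau$ is itself represented by the closed subscheme of $\BV\bigl((M/\Fa M)\dual\bigr)$ cut out by $v=\tau(v)$, which is finite \'etale over $R$ by the same Jacobian computation, hence a locally constant sheaf on $(\Spec R)_{\et}$.
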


\begin{proof}
This result and its proof are due to Anderson~\cite[Lemma~1.8.2]{Anderson86} for $R$ a field. We let $G:=\Res_{A/\Fa|\BF_q}\GL_{r,A/\Fa}$ be the Weil restriction with $G(R')=\GL_r(A/\Fa\otimes_{\BF_q}R')$ for all $\BF_q$-algebras $R'$. Then $G$ is a smooth connected affine group scheme over $\BF_q$ by \cite[Proposition~A.5.9]{CGP}. Thus by Lang's theorem \cite[Corollary on p.~557]{Lang56} the Lang map $L\colon G\to G, g\mapsto g\cdot\sigma^*g^{-1}$ is finite \'etale and surjective (although not a group homomorphism if $r>1$ and $\Fa\ne A$).

Since $\Fa+\CJ=A_R$ the isomorphism $\tau_M\colon\sigma^*M|_{\Spec A_R\setminus\Var(\CJ)}\isoto M|_{\Spec A_R\setminus\Var(\CJ)}$ of $\ulM$ induces an isomorphism $\tau_{M/\Fa M}\colon \sigma^*M/\Fa M\isoto M/\Fa M$ and makes $\ulM/\Fa\ulM$ into a finite $\BF_q$-shtuka, which is \'etale. After passing to a covering of $\Spec R$ by open affine subschemes, we may assume that there is an isomorphism $\alpha\colon (A/\Fa)^r \otimes_{\BF_q}R\isoto M/\Fa M$ and then $\alpha^{-1}\circ\tau_{M/\Fa M}\circ\sigma^*\alpha$ is an element $b\in G(R)$ and corresponds to a morphism $b\colon\Spec R\to G$. The fiber product $\Spec R\underset{b,G,L}{\times}G$ is finite \'etale over $\Spec R$ and of the form $\Spec R'$. The projection onto the second factor $G$ corresponds to an element $c\in G(R')$ with $c\cdot\sigma^*c^{-1}=b$, that is $c=b\cdot\sigma^*c$. This implies $\alpha\circ c=\tau_{M/\Fa M}\circ\sigma^*(\alpha\circ c)$, and thus $\alpha\circ c$ is an isomorphism $(A/\Fa)^r\isoto (\ulM/\Fa\ulM)^\tau(R')\;:=\;\{\,m\otimes M/\Fa M \otimes_RR'\colon m=\tau_M(\sigma_M^*m)\,\}$. The proposition follows from this. 
\end{proof}

\begin{theorem}\label{ThmEtFundGp}
Let $\ulE$ be an abelian Anderson $A$-module over $R$ of rank $r$ and let $\ulM=\ulM(\ulE)$ be its associated effective $A$-motive. Let $(0)\ne\Fa\subset A$ be an ideal with $R\cdot\gamma(\Fa)=R$, that is $\Fa+\CJ=A_R$. Then for every $R$-algebra $R'$ such that $\Spec R'$ is connected, there is an isomorphism of $A/\Fa$-modules
\begin{eqnarray*}
\ulE[\Fa](R') & \isoto & \Hom_{A/\Fa}\bigl((\ulM/\Fa\ulM)^\tau(R')\,,\,\Hom_{\BF_q}(A/\Fa,\BF_q)\bigr)\,,\\[2mm]
P & \longmapsto & \qquad\qquad\quad\bigl[\;\ol m \qquad\longmapsto \qquad[\bar a\mapsto m\circ\phi_a(P)]\;\bigr]\,.
\end{eqnarray*}
In particular, if $\bar s=\Spec\Omega$ is a geometric base point of $\Spec R$, then $\ulE[\Fa](\Omega)$ is a free $A/\Fa$-module of rank $r$ which carries a continuous action of the \'etale fundamental group $\pi_1^\et(\Spec R,\bar s)$.
\end{theorem}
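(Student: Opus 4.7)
The plan is to identify both sides of the asserted isomorphism explicitly, verify the map over geometric points, and then descend.

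Paragraph 1 (Setup and reformulation): The hypothesis $\Fa+\CJ=A_R$ combined with Theorem~\ref{ThmATorsion} yields that $\ulE[\Fa]$ is finite étale over $R$ and canonically isomorphic to $\Dr_q(V)$ for $V:=\ulM/\Fa\ulM$, and Proposition~\ref{PropEtFundGp} gives that $V^\tau(\Omega)$ is a free $A/\Fa$-module of rank $r$ carrying a continuous $\pi_1^\et(\Spec R,\bar s)$-action. By the very definition of $\Dr_q$, a point $P\in\ulE[\Fa](R')$ amounts to an $R$-linear map $\psi_P\colon V\to R'$ satisfying the shtuka relation $\psi_P(F_V(\sigma_V^*v))=\psi_P(v)^q$. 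Its $R'$-linear extension $\psi_{P,R'}\colon V\otimes_RR'\to R'$ fulfills $\psi_{P,R'}(\bar m)=m(P)$, so the map in the theorem reads $P\mapsto\bigl[\bar m\mapsto[\bar a\mapsto\psi_{P,R'}(\bar a\bar m)]\bigr]$, using $m(\phi_a(P))=(am)(P)=\psi_P(\bar a\bar m)$.

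Paragraph 2 (Well-definedness): I would verify three points: (a) $m(\phi_a(P))$ is independent of the lift $m$ of $\bar m$ because $\phi_\Fa(P)=0$; (b) for $\bar n\in V^\tau(R')$ the value $\psi_{P,R'}(\bar n)$ lies in $\BF_q\subset R'$, which follows from the identity $\psi_{P,R'}\circ F_V\circ\sigma^*=\Frob_{R'}\circ\psi_{P,R'}$ — a direct calculation using the natural identification $\sigma^*(V\otimes_RR')\cong\sigma^*V\otimes_RR'$ (where a pure tensor $v\otimes b$ transforms to $\sigma_V^*v\otimes b^q$) together with the shtuka relation on $\psi_P$ — applied to $\bar n=\bar a\bar m$, which is itself $\tau$-invariant since $\sigma$ fixes $A\otimes 1\subset A_{R'}$; and (c) $A/\Fa$-linearity in $\bar m$ matches the dual action $(\bar c\lambda)(\bar a):=\lambda(\bar c\bar a)$ on $\Hom_{\BF_q}(A/\Fa,\BF_q)$.

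Paragraph 3 (The isomorphism and the conclusion): To establish the isomorphism I would first treat the geometric point $R'=\Omega$. The construction of $c$ in Proposition~\ref{PropEtFundGp} exhibits an isomorphism of $A/\Fa\otimes_{\BF_q}\Omega$-modules $V^\tau(\Omega)\otimes_{\BF_q}\Omega\isoto V\otimes_R\Omega$, so $V^\tau(\Omega)$ spans $V\otimes_R\Omega$ as an $\Omega$-module. Setting $\bar a=1$ in the vanishing hypothesis forces $\psi_{P,\Omega}|_{V^\tau(\Omega)}=0$, hence $\psi_{P,\Omega}=0$ and $P=0$, giving injectivity. Both sides have cardinality $q^{r\cdot\dim_{\BF_q}(A/\Fa)}$: the source because $\ulE[\Fa]=\Spec(\Sym_RV/I)$ is étale of $R$-rank $q^{\rk_RV}$, and the target because $V^\tau(\Omega)\cong(A/\Fa)^r$ combined with $\Hom_{\BF_q}(A/\Fa,\BF_q)\cong A/\Fa$ as $A/\Fa$-module (the Artinian principal ideal ring $A/\Fa\cong\prod A/\Fp_i^{n_i}$ is Gorenstein). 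Injectivity plus equal cardinalities give the iso at $\Omega$, which is $\pi_1^\et(\Spec R,\bar s)$-equivariant by naturality; for general connected $R'$, both source and target identify with the $\pi_1^\et(\Spec R',\bar s)$-invariants inside their $\Omega$-counterparts (using étaleness of $\ulE[\Fa]$ and base-change compatibility of $V^\tau$), producing the iso over $R'$. The final assertion about $\ulE[\Fa](\Omega)$ follows by composing the iso at $\Omega$ with Proposition~\ref{PropEtFundGp}. The principal obstacle is the Frobenius-compatibility in (b), demanding careful bookkeeping of the $\sigma$-twisted base change, together with the generation statement from Proposition~\ref{PropEtFundGp}, which ultimately rests on Lang's theorem for $\Res_{A/\Fa|\BF_q}\GL_{r,A/\Fa}$.
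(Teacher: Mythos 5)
Your construction of the map, the well-definedness checks in your second paragraph, and your argument over a geometric point $\bar s=\Spec\Omega$ (spanning of $V\otimes_R\Omega$ by $V^\tau(\Omega)$ via Lang's theorem, injectivity, and the cardinality count using $\Hom_{\BF_q}(A/\Fa,\BF_q)\cong A/\Fa$) are sound; note that the paper itself does not reprove this theorem but appeals to Anderson and to \cite{BoeckleHartl}, so up to this point your write-up is already more detailed than the paper's.

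The gap is in the descent to a general connected $R'$. You assert that the target $\Hom_{A/\Fa}\bigl((\ulM/\Fa\ulM)^\tau(R'),D\bigr)$, with $D:=\Hom_{\BF_q}(A/\Fa,\BF_q)$, ``identifies with the $\pi_1^\et(\Spec R',\bar s)$-invariants of its $\Omega$-counterpart.'' This identification is not justified and fails as a general principle: writing $N:=(\ulM/\Fa\ulM)^\tau(\Omega)$ and $\Gamma:=\pi_1^\et(\Spec R',\bar s)$, one has $\Hom_{A/\Fa}(N,D)^\Gamma=\Hom_{A/\Fa}(N_\Gamma,D)$, because an invariant homomorphism into a module with trivial $\Gamma$-action is one factoring through the coinvariants, whereas the target over $R'$ is $\Hom_{A/\Fa}(N^\Gamma,D)$. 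The natural comparison map between these two groups is the $D$-dual of the canonical map $N^\Gamma\into N\onto N_\Gamma$, which need not be bijective even when both sides have the same cardinality: for instance for $q=2$, $A/\Fa=\BF_2$, $N=\BF_2^2$ and $\Gamma$ acting through a transvection $u=\left(\begin{smallmatrix}1&1\\0&1\end{smallmatrix}\right)$ one has $\im(u-1)=\ker(u-1)$, so $N^\Gamma\to N_\Gamma$ is the zero map between two groups of order $2$. Consequently, bijectivity over $R'$ does not follow from the isomorphism over $\Omega$ by ``taking invariants of both sides'': what Galois descent for the finite \'etale schemes $\ulE[\Fa]$ and $\underline{V^\tau}$ actually yields is $\ulE[\Fa](R')\isoto\Hom_{A/\Fa}(N,D)^\Gamma$, i.e.\ the $R'$-sections of the internal Hom, and identifying this with $\Hom_{A/\Fa}(N^\Gamma,D)$ is precisely the nontrivial content of the statement for general $R'$ (this is what \cite[Lemma~2.4 and Theorem~8.6]{BoeckleHartl}, cited by the paper, supplies). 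As written, your argument establishes the theorem only for $R'=\Omega$, and hence the final ``in particular'' assertion, but not the displayed isomorphism for arbitrary connected $R'$.
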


\begin{proof}
This result and its proof are due to Anderson~\cite[Proposition~1.8.3]{Anderson86} for $R$ a field. For general $R$ the proof was carried out in \cite[Lemma~2.4 and Theorem~8.6]{BoeckleHartl}. The last statement follows from Proposition~\ref{PropEtFundGp}.
\end{proof}

%%%%%%%%%%%%%%%%%%%%%%%%%%%%%%%%%%%%%%%%%%%%%%%%%%%%%%%%%%%%%%%%%%%%%%
%
%    Divisible local Anderson modules
%
%%%%%%%%%%%%%%%%%%%%%%%%%%%%%%%%%%%%%%%%%%%%%%%%%%%%%%%%%%%%%%%%%%%%%%

\section{Divisible local Anderson modules}\label{SectDivLocAndMod}
\setcounter{equation}{0}

In this section we consider the situation where $\epsilon\subset A$ is a maximal ideal and the elements of $\gamma(\epsilon)\subset R$ are nilpotent. Let $\hat q$ be the cardinality of the residue field $\BF_\epsilon=A/\epsilon$ and $f=[\BF_\epsilon:\BF_q]$, that is $\hat q=q^f$. We fix a uniformizing parameter $z\in\Quot(A)$ at $\epsilon$. It defines an isomorphism $\BF_\epsilon\dbl z\dbr\isoto \wh{A}_\epsilon:=\invlim A/\epsilon^n$. We consider the $\epsilon$-adic completion $\wh{A}_{\epsilon,R}:=\invlim A_R/\epsilon^n=(\BF_\epsilon\otimes_{\BF_q}R)\dbl z\dbr$. By continuity the map $\gamma$ extends to a ring homomorphism $\gamma\colon \wh{A}_\epsilon\to R$. We consider the ideals $\Fa_i=(a\otimes1-1\otimes\gamma(a)^{q^i}\colon a\in\BF_\epsilon)\subset \wh{A}_{\epsilon,R}$ for $i\in\BZ/f\BZ$. By the Chinese remainder theorem $\wh{A}_{\epsilon,R}$ decomposes
\[
\wh{A}_{\epsilon,R}\;=\;(\BF_\epsilon\otimes_{\BF_q}R)\dbl z\dbr\;=\;\prod_{i\in\BZ/f\BZ}\wh{A}_{\epsilon,R}/\Fa_i\,,
\]
and $\wh{A}_{\epsilon,R}/\Fa_i$ is the subset of $\wh{A}_{\epsilon,R}$ on which $a\otimes1$ acts as $1\otimes\gamma(a)^{q^i}$ for all $a\in\BF_\epsilon$. Each factor is canonically isomorphic to $R\dbl z\dbr$. The factors are cyclically permuted by $\sigma$ because $\sigma(\Fa_i)=\Fa_{i+1}$. In particular $\hat\sigma:=\sigma^f$ stabilizes each factor and acts on it via $\hat\sigma(z)=z$ and $\hat\sigma(b)=b^{\hat q}$ for $b\in R$. The ideal $\CJ:=(a\otimes1-1\otimes\gamma(a)\colon a\in A)\subset A_R$ decomposes as follows $\CJ\!\cdot\! \wh{A}_{\epsilon,R}/\Fa_0=(z-\gamma(z))$ and $\CJ\!\cdot\! \wh{A}_{\epsilon,R}/\Fa_i=(1)$ for $i\ne0$. In particular, $\wh{A}_{\epsilon,R}/\Fa_0$ equals the $\CJ$-adic completion of $A_R$, as $\gamma(z)$ is nilpotent in $R$; compare also \cite[Lemma~5.3]{AH_Local}. We also set $R\dpl z\dpr:=R\dbl z\dbr[\tfrac{1}{z}]$.

\begin{definition}\label{DefLocSht}
A \emph{local $\hat\sigma$-shtuka} (or \emph{local shtuka}) \emph{of rank} $r$ over $R$ is a pair $\ulHM=(\hat M,\tau_{\hat M})$ consisting of a locally free $R\dbl z\dbr$-module $\hat M$ of rank $r$, and an isomorphism $\tau_{\hat M}\colon\hat\sigma^\ast\hat M[\frac{1}{z-\gamma(z)}] \isoto\hat M[\frac{1}{z-\gamma(z)}]$. If $\tau_{\hat M}(\hat\sigma^*\hat M)\subset\hat M$ then $\ulHM$ is called \emph{effective}, and if $\tau_{\hat M}(\hat\sigma^*\hat M)=\hat M$ then $\ulHM$ is called \emph{\'etale}.

A \emph{morphism} of local shtukas $f\colon(\hat M,\tau_{\hat M})\to(\hat M',\tau_{\hat M'})$ over $R$ is a morphism of $R\dbl z\dbr$-modules $f\colon\hat M\to\hat M'$ which satisfies $\tau_{\hat M'}\circ \hat\sigma^\ast f = f\circ \tau_{\hat M}$.
\end{definition}

\begin{example}\label{ExLocShtuka}
Let $\ulM=(M,\tau_M)$ be an $A$-motive over $R$. We consider the $\epsilon$-adic completion $\ulM\otimes_{A_R}\wh{A}_{\epsilon,R}:=(M\otimes_{A_R}\wh{A}_{\epsilon,R}\,,\,\tau_M\otimes1)=\invlim\ulM/\epsilon^n\ulM$. We define the \emph{local $\hat\sigma$-shtuka at $\epsilon$ associated with $\ulM$} as $\ulHM_\epsilon(\ulM):=\bigl(M\otimes_{A_R}\wh{A}_{\epsilon,R}/\Fa_0\,,\,(\tau_M\otimes1)^f\bigr)$, where $\tau_M^f:=\tau_M\circ\sigma^*\tau_M\circ\ldots\circ\sigma^{(f-1)*}\tau_M$. It equals the $\CJ$-adic completion of $\ulM$ and therefore is effective if and only if $\ulM$ is effective, because of Proposition~\ref{PropAMotiveEffective}. Of course if $\BF_\epsilon=\BF_q$, and hence $\hat q=q$ and $\hat\sigma=\sigma$, we have $\wh{A}_{\epsilon,R}=R\dbl z\dbr$ and $\ulHM_\epsilon(\ulM)=\ulM\otimes_{A_R}\wh{A}_{\epsilon,R}$.

Also for $f>1$ the local shtuka $\ulHM_\epsilon(\ulM)$ allows to recover $\ulM\otimes_{A_R}\wh{A}_{\epsilon,R}$ via the isomorphism
\[
\bigoplus_{i=0}^{f-1}(\tau_M\otimes1)^i\mod\Fa_i\colon\Bigl(\bigoplus_{i=0}^{f-1}\sigma^{i*}(M\otimes_{A_R}\wh{A}_{\epsilon,R}/\Fa_0),\;(\tau_M\otimes1)^f\oplus\bigoplus_{i\ne0}\id\Bigr)\;\isoto\;\ulM\otimes_{A_R}\wh{A}_{\epsilon,R}\,,
\]
because for $i\ne0$ the equality $\CJ\!\cdot\! \wh{A}_{\epsilon,R}/\Fa_i=(1)$ implies that $\tau_M\otimes 1$ is an isomorphism modulo $\Fa_i$; see \cite[Example~2.2]{HartlKim} or \cite[Propositions~8.8 and 8.5]{BH1} for more details.
\end{example}

Let $\ulHM=(\hat M,\tau_{\hat M})$ be an effective local shtuka over $R$. Set $\ulHM_n:=(\hat M_n,\tau_{\hat M_n}):=(\hat M/z^n\hat M,\tau_{\hat M}\mod z^n)$ and $G_n:=\Dr_{\hat q}(\ulHM_n)$. Then $G_n$ is a finite locally free strict $\BF_\epsilon$-module scheme over $R$ and $\ulHM_n=\ulM_{\hat q}(G_n)$ by Theorem~\ref{ThmEqAModSch}. Moreover, $G_n$ inherits from $\ulHM_n$ an action of $\BF_\epsilon[z]/(z^n)$. The canonical epimorphisms $\ulHM_{n+1}\onto\ulHM_n$ induce closed immersions $i_n\colon G_n\into G_{n+1}$. The inductive limit $\Dr_{\hat q}(\ulHM):=\dirlim G_n$ in the category of sheaves on the big \fppf-site of $\Spec R$ is a sheaf of $\BF_\epsilon\dbl z\dbr$-modules that satisfies the following

\begin{definition}\label{DefZDivGp}
A \emph{$\name$-divisible local Anderson module over $R$} is a sheaf of $\BF_\epsilon\dbl z\dbr$-modules $G$ on the big \fppf-site of $\Spec R$ such that
\begin{enumerate}
\item \label{DefZDivGpAxiom1}
$G$ is \emph{$\name$-torsion}, that is $G = \dirlim G[z^n]$, where $G[z^n]:=\ker(z^n\colon G\to G)$,
\item \label{DefZDivGpAxiom2}
$G$ is \emph{$\name$-divisible}, that is $z\colon G \to G$ is an epimorphism,
\item \label{DefZDivGpAxiom3}
For every $n$ the $\BF_\epsilon$-module $G[z^n]$ is representable by a finite locally free strict $\BF_\epsilon$-module scheme over $R$ (Definition~\ref{DefStrict}), and
\item \label{DefZDivGpAxiom4}
there exist an integer $d \in \BZ_{\geq 0}$, such that $(z-\gamma(z))^d=0$ on $\omega_G$ where $\omega_G := \invlim\omega_{G[z^n]}$ and $\omega_{G[z^n]}=e^*\Omega^1_{G[z^n]/\Spec R}$ is the pullback under the zero section $e\colon\Spec R\to G[z^n]$.
\end{enumerate}
A \emph{morphism of $\name$-divisible local Anderson modules over $R$} is a morphism of \fppf-sheaves of $\BF_\epsilon\dbl z\dbr$-modules. The category of divisible local Anderson modules is $\BF_\epsilon\dbl z\dbr$-linear.
It is shown in \cite[Lemma~8.2]{HartlSingh} that $\omega_G$ is a finite locally free $R$-module and we define the \emph{dimension of $G$} as $\rk\omega_G$\,. A $\name$-divisible local Anderson module is called \emph{\'etale} if $\omega_G = 0$.  Since $\omega_G$ surjects onto each $\omega_{G[z^n]}$, this is the case if and only if all $G[z^n]$ are \'etale, see \cite[Lemma~3.7]{HartlSingh}.
%
%If $\dim G$ is constant and the integer $d:=\dim G$ satisfies $(z-\gamma(z))^d\cdot\omega_G=(0)$ globally on $R$ in axiom~\ref{DefZDivGpAxiom4}, we say that $G$ is \emph{bounded by $d$}.
\end{definition}

Conversely with a $\name$-divisible local Anderson module $G$ over $R$ one associates the local shtuka $\ulM_{\hat q}(G):=\invlim\ulM_{\hat q}(G[z^n])$. Multiplication with $z$ on $G$ gives $M_{\hat q}(G)$ the structure of an $R\dbl z\dbr$-module. In \cite[Theorem~8.3]{HartlSingh} we proved the following

\begin{theorem} \label{ThmEqZDivGps}
\begin{enumerate}
\item  \label{ThmEqZDivGps_A}
The two contravariant functors $\Dr_{\hat q}$ and $\ulM_{\hat q}$ are mutually quasi-inverse anti-equivalences between the category of effective local shtukas over $R$ and the category of $\name$-divisible local Anderson modules over $R$. 
\item  \label{ThmEqZDivGps_B}
Both functors are $\BF_\epsilon\dbl z\dbr$-linear and map short exact sequences to short exact sequences. They preserve \'etale objects.
\end{enumerate}
\medskip\noindent
Let $\ulHM=(\hat M,\tau_{\hat M})$ be an effective local shtuka over $S$ and let $G=\Dr_{\hat q}(\ulHM)$ be its associated $\name$-divisible local Anderson module. Then
\begin{enumerate}
\setcounter{enumi}{2}
\item  \label{ThmEqZDivGps_C}
$G$ is a formal Lie group if and only if $\tau_{\hat M}$ is topologically nilpotent, that is $\im(\tau_{\hat M}^n)\subset z\hat M$ for an integer $n$.
%\item  \label{ThmEqZDivGps_D}
%the height (see Proposition~\ref{PropTate}) and dimension of $G$ are equal to the rank and dimension of $\ulM$.
\item  \label{ThmEqZDivGps_E}
the $R\dbl z\dbr$-modules $\omega_{\Dr_{\hat q}(\ulHM)}$ and $\coker\tau_{\hat M}$ are canonically isomorphic.
\end{enumerate}
\end{theorem}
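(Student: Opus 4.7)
The plan is to reduce the entire theorem to the finite shtuka equivalence of Section~\ref{SectFinShtukas}, applied with $\BF_\epsilon$ and $\hat\sigma=\sigma^f$ in place of $\BF_q$ and $\sigma$. First I would verify that Theorem~\ref{ThmEqAModSch} holds verbatim with $\BF_\epsilon$ replacing $\BF_q$: the arguments of Abrashkin and of \cite[\S 5]{HartlSingh} only use that $\BF_\epsilon$ is a finite field acting compatibly on the deformation $B^\flat$, so they carry over. This yields an anti-equivalence between finite $\BF_\epsilon$-shtukas $(V,F_V\colon\hat\sigma^*V\to V)$ over $R$ and finite locally free strict $\BF_\epsilon$-module schemes over $R$, preserving \'etaleness and identifying the co-Lie complex with $[\hat\sigma^*V\xrightarrow{F_V}V]$.

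Next I would set up the correspondence between inverse and direct systems. An effective local shtuka $\ulHM$ over $R$ is the inverse limit of its quotients $\ulHM_n:=\ulHM/z^n\ulHM$, each of which is a finite $\BF_\epsilon$-shtuka (here effectivity is used so that $\tau_{\hat M}$ descends modulo $z^n$). Setting $G_n:=\Dr_{\hat q}(\ulHM_n)$, the short exact sequences $0\to\ulHM_n\xrightarrow{\,z\,}\ulHM_{n+1}\to\ulHM_1\to0$ and exactness of $\Dr_{\hat q}$ from the $\BF_\epsilon$-version of Theorem~\ref{ThmEqAModSch}\ref{ThmEqAModSch_A} identify $G_n$ with $G_{n+1}[z^n]$, so that $G:=\dirlim G_n$ is $z$-torsion and $z$-divisible with $G[z^n]=G_n$. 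Condition \ref{DefZDivGpAxiom3} of Definition~\ref{DefZDivGp} is then built in, and condition \ref{DefZDivGpAxiom4} will follow from part~(d). The inverse functor is $G\mapsto\ulM_{\hat q}(G):=\invlim\ulM_{\hat q}(G[z^n])$, with $R\dbl z\dbr$-module structure given by the action of $z$ on $G$, and one checks that the natural transformations of Theorem~\ref{ThmEqAModSch}\ref{ThmEqAModSch_E} assemble in the limit to an equivalence.

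For parts~(b) and~(d), $\BF_\epsilon\dbl z\dbr$-linearity is immediate from the level-$n$ construction, and exactness in both directions reduces to the level-$n$ exactness in Theorem~\ref{ThmEqAModSch}\ref{ThmEqAModSch_A} combined with the Mittag-Leffler condition on $(\ulHM_n)_n$. Preservation of \'etaleness follows from Theorem~\ref{ThmEqAModSch}\ref{ThmEqAModSch_B} applied at each finite level, since $\ulHM$ is \'etale iff every $\ulHM_n$ is. For the canonical isomorphism $\omega_{\Dr_{\hat q}(\ulHM)}\isoto\coker\tau_{\hat M}$ in~(d), Theorem~\ref{ThmEqAModSch}\ref{ThmEqAModSch_F} provides for each $n$ a canonical isomorphism $\omega_{G_n}\cong\coker(\tau_{\hat M}\mod z^n)$; taking the inverse limit over $n$ yields the stated identification, and the pre-factored $(z-\gamma(z))^d$-nilpotence on $\coker\tau_{\hat M}$ (coming from the effectivity and localization of $\tau_{\hat M}$ away from $\Var(z-\gamma(z))$) gives back condition~\ref{DefZDivGpAxiom4}.

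The main obstacle is part~(c): $G$ is a formal Lie group iff $\tau_{\hat M}$ is topologically nilpotent. If $\tau_{\hat M}^n(\hat\sigma^{n*}\hat M)\subset z\hat M$, then working in a local basis one sees that on $G_m$ the comultiplication augmentation ideal is nilpotent, so each $G_m$ is infinitesimal and $G=\dirlim G_m$ is a formal Lie group. Conversely, if some iterate of $\tau_{\hat M}$ retains a non-nilpotent component modulo $z$, one produces, via the $\BF_\epsilon$-analog of Theorem~\ref{ThmEqAModSch}\ref{ThmEqAModSch_B}, a non-trivial \'etale quotient of $G_m$ for $m\gg0$, contradicting formality. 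This part requires a careful analysis of the interplay between the $z$-adic filtration on $\hat M$ and the iterates of $\tau_{\hat M}$, and is the most delicate step; it is where the bulk of the argument in \cite[Theorem~8.3]{HartlSingh} is concentrated.
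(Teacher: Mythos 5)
This theorem is not proved in the paper at all: the text introducing it reads ``In \cite[Theorem~8.3]{HartlSingh} we proved the following'', so it is a recalled result whose proof lives entirely in the reference [HS15]. There is therefore no in-paper argument to compare against, and your proposal should be judged as an attempted reconstruction of that external proof.

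As such a reconstruction, your outline follows the same architecture that the paper itself sets up in the two paragraphs surrounding the theorem (truncate $\ulHM$ to finite $\BF_\epsilon$-shtukas $\ulHM_n=\ulHM/z^n\ulHM$, apply the finite shtuka equivalence $\Dr_{\hat q}$ levelwise, pass to the colimit; invert by $G\mapsto\invlim\ulM_{\hat q}(G[z^n])$), and parts (a), (b) and (d) plausibly reduce to Theorem~\ref{ThmEqAModSch} plus limit arguments as you describe. But the proposal is an outline rather than a proof at exactly the points where the real work sits. First, you invoke ``local freeness of $\omega_G$'' and the identification $\omega_G\cong\coker\tau_{\hat M}$ as if they followed formally from taking $\invlim$ of Theorem~\ref{ThmEqAModSch}\ref{ThmEqAModSch_F}; one must check that the transition maps on the $\omega_{G[z^n]}$ match the maps $\coker(\tau_{\hat M}\bmod z^{n+1})\to\coker(\tau_{\hat M}\bmod z^n)$ and that the system stabilizes (it does, because $\coker\tau_{\hat M}$ is killed by a power of $z-\gamma(z)$ and $\gamma(z)$ is nilpotent, hence by a power of $z$ --- but this is the content of \cite[Lemma~8.2]{HartlSingh}, not a formality). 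Second, and more seriously, part (c) is the analogue of Messing's theorem that a connected $p$-divisible group over a base with $p$ nilpotent is a formal Lie group; your two-sentence sketch (``each $G_m$ is infinitesimal, so the colimit is a formal Lie group'') asserts precisely the nontrivial implication without proving it, and you concede as much. So the proposal correctly identifies the strategy but does not constitute a proof; if the intention is to re-prove the theorem rather than cite it, part (c) and the finiteness/local-freeness statements about $\omega_G$ must be supplied in full.
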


We now want to show that for an abelian Anderson $A$-module $\ulE$ over $R$ the local shtuka $\ulHM_\epsilon\bigl(\ulM(\ulE)\bigr)$ corresponds to the $\epsilon$-power torsion of $\ulE$ as in the following

\begin{definition}\label{DefDivLocAModOfAMod}
Let $\ulE$ be an abelian Anderson $A$-module over $R$ and assume that the elements of $\gamma(\epsilon)\subset R$ are nilpotent. We define $\ulE[\epsilon^\infty]:=\dirlim\ulE[\epsilon^n]$ and call it the \emph{$\name$-divisible local Anderson module} associated with $\ulE$. 
\end{definition}

This definition is justified by the following 

\begin{theorem}\label{ThmDivLocAModOfAMod}
Let $\ulE=(E,\phi)$ be an abelian Anderson $A$-module over $R$ and assume that the elements of $\gamma(\epsilon)\subset R$ are nilpotent. Then
\begin{enumerate}
\item \label{ThmDivLocAModOfAMod_A}
all $\ulE[\epsilon^n]$ are finite locally free strict $\BF_\epsilon$-module schemes,
\item \label{ThmDivLocAModOfAMod_B}
$\ulE[\epsilon^\infty]$ is a $\name$-divisible local Anderson module over $R$,
\item \label{ThmDivLocAModOfAMod_C}
If $\ulM=\ulM(\ulE)$ is the associated effective $A$-motive of $\ulE$ and $\ulHM:=\ulHM_\epsilon(\ulM)=\ulM\otimes_{A_R}\wh{A}_{\epsilon,R}/\Fa_0$ is the local $\hat\sigma$-shtuka at $\epsilon$ associated with $\ulM$, then there are canonical isomorphisms
\[
\begin{array}{rclcrcl}
\ulM_{\hat q}(\ulE[\epsilon^\infty]) & \cong & \ulHM_\epsilon(\ulM) & \quad \text{and} \quad & \ulE[\epsilon^\infty] & \cong & \Dr_{\hat q}\bigl(\ulHM_\epsilon(\ulM)\bigr)\,,\\[2mm]
\ulM_{q}(\ulE[\epsilon^\infty]) & \cong & \ulM\otimes_{A_R}\wh{A}_{\epsilon,R} & \text{and} & \ulE[\epsilon^\infty] & \cong & \Dr_{q}\bigl(\ulM\otimes_{A_R}\wh{A}_{\epsilon,R}\bigr)\,,\\[2mm]
\ulM_{\hat q}(\ulE[\epsilon^n]) & \cong & \ulHM/\epsilon^n\ulHM & \text{and} & \ulE[\epsilon^n] & \cong & \Dr_{\hat q}(\ulHM/\epsilon^n\ulHM)\,.
\end{array}
\]
\end{enumerate}
\end{theorem}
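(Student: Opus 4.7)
The natural strategy is to prove part (c) first and then derive (a) and (b) from it. For the middle line of (c), apply Theorem~\ref{ThmATorsion}\ref{ThmATorsion_C} with $\Fa=\epsilon^n$ to obtain the $A_R$-equivariant isomorphisms $\ulM/\epsilon^n\ulM\cong\ulM_q(\ulE[\epsilon^n])$ of finite $\BF_q$-shtukas and $\Dr_q(\ulM/\epsilon^n\ulM)\cong\ulE[\epsilon^n]$ of finite locally free $R$-group schemes. Passing to direct limits on the scheme side and inverse limits on the module side then yields $\ulM_q(\ulE[\epsilon^\infty])\cong\ulM\otimes_{A_R}\wh{A}_{\epsilon,R}$ and $\ulE[\epsilon^\infty]\cong\Dr_q(\ulM\otimes_{A_R}\wh{A}_{\epsilon,R})$, which is the middle line of (c).

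To descend to $\hat q$, I would invoke the decomposition from Example~\ref{ExLocShtuka}, namely $\ulM\otimes_{A_R}\wh{A}_{\epsilon,R}\cong\bigoplus_{i=0}^{f-1}\sigma^{i*}\ulHM_\epsilon(\ulM)$, under which $\tau_M$ acts as the identity between mismatched indices and as $\tau_{\hat M}=\tau_M^f$ in the wrap-around from index $f-1$ back to $0$. An $\BF_q$-equivariant element $m\in\ulM_q(\ulE[\epsilon^\infty])$ is $\BF_\epsilon$-equivariant precisely when it lies in the $\Fa_0$-component, which equals $\ulHM_\epsilon(\ulM)$; this identifies $\ulM_{\hat q}(\ulE[\epsilon^\infty])\cong\ulHM_\epsilon(\ulM)$ and, by restriction, gives $\ulM_{\hat q}(\ulE[\epsilon^n])\cong\ulHM/\epsilon^n\ulHM$. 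The dual identification $\ulE[\epsilon^\infty]\cong\Dr_{\hat q}(\ulHM_\epsilon(\ulM))$ then follows by substituting the decomposition into the generators-and-relations description of $\Dr_q$ in Definition~\ref{DefDr_q} and iterating the relation $v^{\otimes q}=F_V(\sigma_V^*v)$ exactly $f$ times, which collapses to the $\hat q$-relation $v^{\otimes\hat q}=\tau_{\hat M}(\hat\sigma^*v)$ on the zeroth summand. Part (a) is then immediate: $\ulHM/\epsilon^n\ulHM$ is a finite $\hat\sigma$-shtuka over $R$, and its Drinfeld functor $\Dr_{\hat q}$ produces a finite locally free strict $\BF_\epsilon$-module scheme by the $\BF_{\hat q}$-analog of Theorem~\ref{ThmEqAModSch}\ref{ThmEqAModSch_A}; this $\BF_\epsilon$-structure matches the one on $\ulE[\epsilon^n]$ coming from $\BF_\epsilon\hookrightarrow\wh{A}_\epsilon\twoheadrightarrow A/\epsilon^n$.

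For (b), I would verify the four axioms of Definition~\ref{DefZDivGp} for $G:=\ulE[\epsilon^\infty]$. Axiom (i) is immediate because $z^n$ acts trivially on $\ulE[\epsilon^n]$. Axiom (ii) follows from the exact sequence~\eqref{EqTorsionExSeq} in the proof of Theorem~\ref{ThmATorsion} specialized to $e=n+1$, which exhibits $\phi_z\colon\ulE[\epsilon^{n+1}]\to\ulE[\epsilon^n]$ as an \fppf-epimorphism, so $z=\phi_z$ is surjective on $G=\dirlim\ulE[\epsilon^n]$. Axiom (iii) is part (a). Axiom (iv): by the identification in (c) and Theorem~\ref{ThmEqZDivGps}\ref{ThmEqZDivGps_E} we have $\omega_G\cong\coker\tau_{\hat M}$, which agrees with the $\Fa_0$-component of the $\epsilon$-adic completion of $\coker\tau_M$; since Theorem~\ref{ThmMotiveOfAModule} says $\coker\tau_M$ is annihilated by $\CJ^d$ for $d=\dim\ulE$ and $\CJ$ becomes $(z-\gamma(z))$ in $\wh{A}_{\epsilon,R}/\Fa_0$, the required annihilation by $(z-\gamma(z))^d$ follows. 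The main obstacle is the $q$-versus-$\hat q$ comparison in the second paragraph above, namely the precise matching of $\Dr_q$ on the $f$-fold decomposed module with $\Dr_{\hat q}$ on its zeroth summand. This is formally straightforward but requires careful bookkeeping of iterated relations and of how the $\BF_\epsilon$-action propagates from the module side to the scheme side; once it is established, the rest of the theorem follows mechanically.
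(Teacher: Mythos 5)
Your argument is correct in substance, but it routes part (a) through part (c) rather than the other way around, which is a genuinely different strategy from the paper's. The paper proves strictness of $\ulE[\epsilon^n]$ first and by hand: after a faithfully flat base change it lifts the $\BF_\epsilon$-action to the deformation $B^\flat$ using $\phi_{t^{\hat qN}}$ for a suitable $t\in A$ with $t\equiv\lambda\mod\epsilon^n$, and then verifies that this lift acts on the co-Lie complex by the scalar $\gamma(\lambda)$ --- the delicate point being the degree $-1$ term, where the action is a priori only \emph{homotopic} to the scalar and an eigenspace decomposition for $t^{\hat qN}$ is needed to upgrade this to equality. Only with strictness in hand can the paper invoke Theorem~\ref{ThmEqAModSch}\ref{ThmEqAModSch_E} (with $q$ replaced by $\hat q$) to conclude $\ulE[\epsilon^n]\cong\Dr_{\hat q}(\ulHM/\epsilon^n\ulHM)$. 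You instead produce that isomorphism directly from the $q$-level statement $\ulE[\epsilon^n]\cong\Dr_q(\ulM/\epsilon^n\ulM)$ of Theorem~\ref{ThmATorsion}\ref{ThmATorsion_C} together with the collapse $\Dr_q(\ulM/\epsilon^n\ulM)\cong\Dr_{\hat q}(\ulHM/\epsilon^n\ulHM)$, and then strictness of $\ulE[\epsilon^n]$ is inherited, since $\Dr_{\hat q}$ of a finite locally free $R$-module is strict by construction (Definition~\ref{DefDr_q}) and strictness is a property of the pair $(G,[\,.\,])$ independent of the chosen presentation, hence transfers along $\BF_\epsilon$-equivariant isomorphisms. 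This is legitimate and avoids the hardest computation in the paper, but the entire weight of (a) now rests on the collapse lemma, which is not contained in any of the quoted results and must be written out. The clean way to close it: the inclusion of the $\Fa_0$-summand induces a well-defined $R$-algebra map because iterating the defining relation of $\Dr_q$ exactly $f$ times yields $v^{\otimes\hat q}\equiv\tau_{\hat M}(\hat\sigma^*v)$ for $v$ in that summand; the map is surjective because the $\Fa_i$-summand is $R$-spanned by elements $\sigma^{i*}v\equiv v^{\otimes q^i}$; and a surjection between finite locally free $R$-algebras of the same rank $q^{\rk_R(M/\epsilon^nM)}=\hat q^{\rk_R(\hat M/\epsilon^n\hat M)}$ is an isomorphism by \cite[Corollary~8.12]{GoertzWedhorn}. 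You must also record that this isomorphism intertwines $\Dr_q(\lambda\otimes1)$ with the built-in action $[\lambda]$ of $\Dr_{\hat q}$, which holds because $\lambda\otimes1$ acts as $1\otimes\gamma(\lambda)$ on the $\Fa_0$-summand. Finally, once (c) is established your axiom-by-axiom verification in (b) is redundant: Theorem~\ref{ThmEqZDivGps}\ref{ThmEqZDivGps_A} already asserts that $\Dr_{\hat q}$ of an effective local shtuka is a $\name$-divisible local Anderson module; and for axiom~\ref{DefZDivGpAxiom4} the paper's own argument is more elementary than yours, using only that $\omega_{\ulE[\epsilon^n]}$ is a quotient of $\omega_\ulE$ and that $\phi_z=\phi_a$ on $\ulE[\epsilon^n]$ for $a\in A$ with $a\equiv z\mod\epsilon^n$.
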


\begin{proof}
\newcommand{\mipo}{g}
\ref{ThmDivLocAModOfAMod_A} By Lemma~\ref{LemmaStrict} we may test strictness after applying a faithfully flat base change to $R$ and assume that $E=\BG_{a,R}^d=\Spec R[x_1,\ldots,x_d]=\Spec R[\ulX]$ and $\ulM(\ulE)=R\{\tau\}^{1\times d}$. We set $B:=\Gamma(\ulE[\epsilon^n],\CO_{\ulE[\epsilon^n]})$ and $I=\ker(R[\ulX]\onto B)$ and $I_0=(x_1,\ldots,x_d)$, and consider the deformation $B^\flat=R[\ulX]/I\!\cdot\!I_0$. The endomorphisms $\phi_a$ of $E$ for $a\in A$ satisfy $\phi_a^*(I)\subset I$ and $\phi_a^*(I_0)\subset I_0$. This defines a lift $A\to\End_{R\text{-algebras}}(B^\flat),\,a\mapsto[a]^\flat:=\phi_a^*$ compatible with addition and multiplication as in Definition~\ref{DefStrict}. 

Let $N\ge\dim\ulE$ be a positive integer which is a power of $\hat q$ such that $\gamma(a)^N=0$ for every $a\in\epsilon^n$. Choose $\lambda\in\BF_\epsilon$ with $\BF_\epsilon=\BF_q(\lambda)$ and let $\mipo$ be the minimal polynomial of $\lambda$ over $\BF_q$. Choose an element $t\in A$ with $t\mod\epsilon^n=\lambda$ in $A/\epsilon^n=\BF_\epsilon\dbl z\dbr/(z^n)$. Then $\mipo(t)\in\epsilon^n$, and hence $\gamma(\mipo(t))^N=0$. On $\Lie E$ the equation $\mipo(t^N)=\mipo(t)^N$ implies $\Lie \phi_{\mipo(t^N)}=\Lie \phi_{\mipo(t)}^N-\gamma(\mipo(t))^N=\bigl(\Lie \phi_{\mipo(t)}-\gamma(\mipo(t))\bigr)^N=0$. So $\phi_{\mipo(t^N)}\,\in\,\End_{R\text{-groups},\BF_q\text{\rm-lin}}(\BG_{a,R}^d)\,=\,R\{\tau\}^{d\times d}$ as a polynomial in $\tau$ has no constant term. This means that $\phi_{\mipo(t^N)}^*(x_i)\in I_0^q$. Moreover, since $\mipo(t)\in\epsilon^n$ we have $\phi_{\mipo(t)}=0$ on $\ulE[\epsilon^n]$ and hence $\phi_{\mipo(t)}^*(x_i)\in I$. Therefore $\phi_{\mipo(t^{\hat qN})}^*(I_0)=\phi_{\mipo(t)}^*\circ\phi_{\mipo(t^{\hat qN-N-1})}^*\circ\phi_{\mipo(t^N)}^*(I_0)\subset\phi_{\mipo(t)}^*(I_0^q)\subset\phi_{\mipo(t)}^*(I_0)^2\subset I\!\cdot\!I_0$. In other words $[\mipo(t^{\hat qN})]^\flat=[0]^\flat$ on $B^\flat$. This shows that the map $\BF_\epsilon=\BF_q[t^{\hat qN}]/(\mipo(t^{\hat qN}))\to\End_{R\text{-algebras}}(B^\flat)$ lifts the action of $\BF_\epsilon\subset\BF_\epsilon\dbl z\dbr/(z^n)$ on $\ulE[\epsilon^n]$ and is compatible with addition and multiplication. 

We compute the induced action on the co-Lie complex $\CoL{\CG/\Spec R}$ of $\CG=(\Spec B,\Spec B^\flat)$. In degree $0$ we have $\ell^{\,0}_{\CG/\Spec R}=\Omega^1_ {R[\ulX]/R} \otimes_{R[\ulX],\,e_{R[\ulX]}} R=\bigoplus_{i=1}^d R\!\cdot\!x_i=I_0/I_0^2$. From $t-\lambda\in\epsilon^n$ we obtain $\gamma(t^{\hat qN})-\gamma(\lambda)=\gamma(t-\lambda)^{\hat qN}=0$ in $R$. On $\Lie E$ this implies $\Lie \phi_{t^{\hat qN}}-\gamma(\lambda)=(\Lie \phi_t-\gamma(t))^{\hat qN}=0$ and therefore $\phi_{t^{\hat qN}}-\gamma(\lambda)\,\in\,\End_{R\text{-groups},\BF_q\text{\rm-lin}}(\BG_{a,R}^d)\,=\,R\{\tau\}^{d\times d}$ as a polynomial in $\tau$ has no constant term. This implies that $\bigl(\phi_{t^{\hat qN}}^*-\gamma(\lambda)\bigr)(I_0)\subset I_0^q\subset I_0^2$. We conclude that $t^{\hat qN}$ acts as the scalar $\gamma(\lambda)$ on $I_0/I_0^2$.

To compute the action of $t^{\hat qN}$ on $\ell^{\,-1}_{\CG/\Spec R}$ we use that by Theorem~\ref{ThmEqAModSch}\ref{ThmEqAModSch_F}, $\CoL{\CG/\Spec R}$ is homotopically equivalent to the complex $0\to\sigma^*M/\epsilon^n\sigma^*M \xrightarrow{\;\tau_M\,}M/\epsilon^n M\to0$ where $\ulM_q(\ulE[\epsilon^n])=\ulM/\epsilon^n\ulM$ and $\ulM=\ulM(\ulE)=(M,\tau_M)$; see Theorem~\ref{ThmATorsion}\ref{ThmATorsion_C}. Since $t^{\hat qN}-\gamma(\lambda)=(t\otimes1-1\otimes\gamma(t))^{\hat qN}=0$ on $\coker\tau_M$ there is an $A_R$-homomorphism $h\colon M\to\sigma^*M$ with $h\,\tau_M=\bigl(t^{\hat qN}-\gamma(\lambda)\bigr)\!\cdot\!\id_{\sigma^*M}$ and $\tau_M \,h=\bigl(t^{\hat qN}-\gamma(\lambda)\bigr)\!\cdot\!\id_M$. This means that $t^{\hat qN}$ is homotopic to the scalar multiplication with $\gamma(\lambda)$ on $0\to\sigma^*M/\epsilon^n\sigma^*M \xrightarrow{\;\tau_M\,}M/\epsilon^n M\to0$, and therefore also on $\CoL{\CG/\Spec R}$. Let $h'\colon I_0/I_0^2\to\ell^{\,-1}_{\CG/\Spec R}=:\ell^{\,-1}$ be this homotopy, that is $(t^{\hat qN}-\gamma(\lambda))|_{\ell^{\,-1}}=h'd$ and $(t^{\hat qN}-\gamma(\lambda))|_{I_0/I_0^2}=dh'$. But we must show that $t^{\hat qN}$ and $\gamma(\lambda)$ are not only homotopic on $\CoL{\CG/\Spec R}$, but equal. 

Since $0=\mipo(t^{\hat qN})=\prod_{i\in\BZ/f\BZ}(t^{\hat qN}-\gamma(\lambda)^{q^i})$ on $\CoL{\CG/\Spec R}$, we can decompose $\ell^{\,-1}=\prod_{i\in\BZ/f\BZ}(\ell^{\,-1})_i$ where $(\ell^{\,-1})_i:=\ker(t^{\hat qN}-\gamma(\lambda)^{q^i}\colon\ell^{-1}\to\ell^{\,-1})$. Since the differential $d$ of $\CoL{\CG/\Spec R}$ is an $R$-homo\-mor\-phism and equivariant for the action of $t^{\hat qN}$, it maps $(\ell^{\,-1})_i$ into $\ker(t^{\hat qN}-\gamma(\lambda)^{q^i}\colon I_0/I_0^2\to I_0/I_0^2)$ which is trivial for $i\ne0$. This shows that $0=h'd=t^{\hat qN}-\gamma(\lambda)=\gamma(\lambda^{q^i}-\lambda)$ on $(\ell^{\,-1})_i$, whence $(\ell^{\,-1})_i=(0)$ for $i\ne0$, because $\gamma(\lambda^{q^i}-\lambda)\in R\mal$. We conclude that $\ell^{\,-1}=(\ell^{\,-1})_0$ and $t^{\hat qN}$ acts as the scalar $\gamma(\lambda)$ on $\ell^{\,-1}$. This proves that $\ulE[\epsilon^n]$ is a finite locally free strict $\BF_\epsilon$-module scheme over $R$.

\medskip\noindent
\ref{ThmDivLocAModOfAMod_B} By construction $\ker(z^n\colon\ulE[\epsilon^\infty]\to\ulE[\epsilon^\infty])=\ulE[\epsilon^n]$ and $\ulE[\epsilon^\infty]$ is $\name$-torsion. Using the epimorphism $j_{n+1,n}\colon\ulE[\epsilon^{n+1}]\onto\ulE[\epsilon^n]$ from \eqref{EqTorsionExSeq} with $i_{n,n+1}\circ j_{n+1,n}=\phi_z$ we see that $\ulE[\epsilon^\infty]$ is $\name$-divisible. In \ref{ThmDivLocAModOfAMod_A} we saw that $\ulE[\epsilon^n]$ is representable by a finite locally free strict $\BF_\epsilon$-module scheme over $R$. It remains to verify condition \ref{DefZDivGpAxiom4} of Definition~\ref{DefZDivGp}. Since $\ulE[\epsilon^n]\into\ulE$ is a closed immersion, $\omega_{\ulE[\epsilon^n]}$ is a quotient of $\omega_\ulE=\Hom_R(\Lie \ulE,R)$. Since $A/\epsilon^n=\BF_\epsilon\dbl z\dbr/(z^n)$, there is an element $a\in A$ with $z-a\in\epsilon^n$, whence $\phi_a=\phi_z$ on $\ulE[\epsilon^n]$. Therefore $(\Lie \phi_a-\gamma(a))^d=0$ on $\Lie \ulE$ implies $(\phi_z-\gamma(z))^{N}=(\phi_a-\gamma(a))^N+\gamma(a-z)^N=0$ on $\omega_{\ulE[\epsilon^n]}$. It follows that $(\phi_z-\gamma(z))^N=0$ on $\omega_{\ulE[\epsilon^\infty]} := \invlim\omega_{\ulE[\epsilon^n]}$, and that $\ulE[\epsilon^\infty]$ is a $\name$-divisible local Anderson module over $R$.

\medskip\noindent
\ref{ThmDivLocAModOfAMod_C} We have $\ulM_{q}(\ulE[\epsilon^n])=\Hom_{R\text{-groups},\BF_q\text{\rm-lin}}(\ulE[\epsilon^n],\BG_{a,R})=\ulM/\epsilon^n\ulM$ and $\ulE[\epsilon^n]=\Dr_q(\ulM/\epsilon^n\ulM)$ by Theorem~\ref{ThmATorsion}\ref{ThmATorsion_C}. This implies 
\[
\ulM_q(\ulE[\epsilon^\infty])\;=\;\invlim\ulM_q(\ulE[\epsilon^n])\;=\;\invlim\ulM/\epsilon^n\ulM\;=\;\ulM\otimes_{A_R}\wh{A}_{\epsilon,R}
\]
and $\ulE[\epsilon^\infty]\,=\,\dirlim\Dr_{q}(\ulM/\epsilon^n\ulM)\,=\,\Dr_{q}(\invlim\ulM/\epsilon^n\ulM)\,=\,\Dr_{q}(\ulM\otimes_{A_R}\wh{A}_{\epsilon,R})$.

On $\ulE[\epsilon^n]$ every $\lambda\in\BF_\epsilon$ acts as $\phi_\lambda$ and on $\BG_{a,R}$ as $\gamma(\lambda)$. Therefore
\begin{eqnarray*}
\ulM_{\hat q}(\ulE[\epsilon^n]) & = & \Hom_{R\text{-groups},\BF_\epsilon\text{\rm-lin}}(\ulE[\epsilon^n],\BG_{a,R})\\[2mm]
 & = & \ulM_{q}(\ulE[\epsilon^n])/\Fa_0\ulM_{q}(\ulE[\epsilon^n])\\[2mm]
 & = & \ulM/\epsilon^n\ulM\otimes_{\wh{A}_{\epsilon,R}}\wh{A}_{\epsilon,R}/\Fa_0\\[2mm]
 & = & \ulHM/\epsilon^n\ulHM\,,
\end{eqnarray*}
where the second equality is due to the fact that $\wh{A}_{\epsilon,R}/\Fa_0$ is the summand of $\wh{A}_{\epsilon,R}$ on which $\lambda\otimes 1$ acts as $1\otimes\gamma(\lambda)$ for all $\lambda\in\BF_\epsilon$. This implies 
\[
\ulM_{\hat q}(\ulE[\epsilon^\infty])=\invlim\ulM/\epsilon^n\ulM\otimes_{\wh{A}_{\epsilon,R}}\wh{A}_{\epsilon,R}/\Fa_0=\ulM\otimes_{A_R}\wh{A}_{\epsilon,R}/\Fa_0=\ulHM_\epsilon(\ulM)=\ulHM\,.
\]
On the other hand, since $\ulE[\epsilon^n]$ is a finite locally free strict $\BF_\epsilon$-module by \ref{ThmDivLocAModOfAMod_A}, $\ulE[\epsilon^n]=\Dr_{\hat q}\bigl(\ulM_{\hat q}(\ulE[\epsilon^n])\bigr)=\Dr_{\hat q}(\ulHM/\epsilon^n\ulHM)$ by Theorem~\ref{ThmEqAModSch}\ref{ThmEqAModSch_E}, and so $\ulE[\epsilon^\infty]=\dirlim\Dr_{\hat q}(\ulHM/\epsilon^n\ulHM)=\Dr_{\hat q}\bigl(\ulHM_\epsilon(\ulM)\bigr)$.
\end{proof}

%%%%%%%%%%%%%%%%%%%%%%%%%%%%%%%%%%%%%%%%%%%%%%%%%%%%%%%%%%%%%%%%%%%%%%
%
%    Bibliography
%
%%%%%%%%%%%%%%%%%%%%%%%%%%%%%%%%%%%%%%%%%%%%%%%%%%%%%%%%%%%%%%%%%%%%%%

{\small

}

\vfill

\begin{minipage}[t]{0.5\linewidth}
\noindent
Urs Hartl\\
Universit\"at M\"unster\\
Mathematisches Institut \\
Einsteinstr.~62\\
D -- 48149 M\"unster
\\ Germany
\\[1mm]
www.math.uni-muenster.de/u/urs.hartl/
\end{minipage}
\begin{minipage}[t]{0.45\linewidth}
\noindent
\\
\\
\\[1mm]
\end{minipage}

\end{document}